\newcommand{\co}{\operatorname{Cone}}
\newcommand{\td}{\operatorname{Todd}}
\newcommand{\ict}{\operatorname{iCT}}
\newcommand{\ct}{\operatorname{CT}}
\newcommand{\ev}{\operatorname{ev}}
\newcommand{\ord}{\operatorname{ord}}
\newcommand{\SL}{\operatorname{SL}_2(\mathbb{Z})}
\newcommand{\GL}{{\rm GL}}
\newcommand{\spec}{\operatorname{Spec}}
\newcommand{\res}{\operatorname{Res}}
\newcommand{\flag}{\mathcal{F}}
\newcommand{\FA}{\mathbb{A}} % affine scheme
\newcommand{\FB}{\mathbb{B}}
\newcommand{\FN}{\mathbb{N}}
\newcommand{\FZ}{\mathbb{Z}}  % Integer ring Z
\newcommand{\FC}{\mathbb{C}}  % Complex field
\newcommand{\FR}{\mathbb{R}}  % Real field
\newcommand{\FQ}{\mathbb{Q}}  % Rational field
\newcommand{\FP}{\mathbb{P}}
\newcommand{\bone}{{\vec{1}}}
\newcommand{\be}{\mathbf{e}}
\newcommand{\bi}{\mathbbm{i}}
\newcommand{\bj}{\mathbbm{j}}
\newcommand{\bm}{\mathbbm{m}}
\newcommand{\bp}{\mathbf{p}}
\newcommand{\br}{{\mathbbm{r}}}
\newcommand{\bx}{{\vec{x}}}
\newcommand{\fD}{\mathfrak{P}}
\newcommand{\fI}{\mathfrak{I}}
\newcommand{\fO}{\mathfrak{O}}
\newcommand{\fA}{\mathfrak{A}}
\newcommand{\fS}{\mathfrak{S}}
\newcommand{\fa}{\mathfrak{a}}
\newcommand{\fb}{\mathfrak{b}}
\newcommand{\fm}{\mathfrak{m}}
\newcommand{\sC}{\mathcal{C}}
\newcommand{\sS}{\mathcal{S}}
\newcommand{\wB}{\widetilde{B}}
\newcommand{\la}{\left<}
\newcommand{\ra}{\right>}
\newtheorem{thm}{Theorem}[section]
\newtheorem{lem}[thm]{Lemma}
\newtheorem{prop}[thm]{Proposition}
\newtheorem{defn}[thm]{Definition}
\newtheorem{cor}[thm]{Corollary}
\newtheorem{exam}[thm]{Example}
\newtheorem{remark}[thm]{Remark}
\begin{document}

\title[Rademacher function for generalized Dedekind sums]{An analogue of the Rademacher function for generalized Dedekind sums in higher dimension}

\author{Hi-joon Chae}
\address{Department of Mathematics Education, Hongik University, 
Seoul 121-791, Republic of Korea}
\email{hchae@hongik.ac.kr}
\thanks{H.C. was supported by 2013 Hongik University Research Fund.}

\author{Byungheup Jun}
\address{Department of Mathematics, Yonsei University,  
50 Yonsei-Ro, Seodaemun-Gu, Seoul 120-749, Republic of Korea}
\email{bhjun@yonsei.ac.kr}
%\thanks{}

\author{Jungyun Lee}
\address{Department of Mathematics, Ewha Womans University,
52 Ewhayeodae-gil, Seodaemun-gu, Seoul 120-750, Republic of Korea
}
\email{lee9311@kias.re.kr}
%\thanks{}

\subjclass[2000]{11F20,11L03,14M25}

\keywords{Rademacher function, Dedekind sums, Todd series, exponential sums, equidistribution}

%\date{December 3, 2013}
\date{May 30, 2014}

\begin{abstract}
 %$\bi\in \FN^n$, 
We consider generalized Dedekind sums 
%$s_\bi(q;p_1,\ldots,p_{n-1})$ 
in dimension $n$,  for  fixed $n$-tuple of natural numbers, defined as sum of products of values of
periodic Bernoulli functions. 
This includes the higher dimensional Dedekind sums of Zagier and Apostol-Carlitz' generalized Dedekind sums as well as the original Dedekind sums.  
These are realized as coefficients of Todd series of lattice cones 
and satisfy reciprocity law from the cocycle property of Todd series. 
Using iterated residue formula, we compute the coefficient of the decomposition of  of the Todd 
series corresponding to a nonsingular decomposition of the lattice cone defining the Dedekind sums. 
We associate a Laurent polynomial  which is added to generalized Dedekind sums of  fixed index $\bi$ to make their denominators bounded. 
We give explicitly the denominator in terms of Bernoulli numbers.
This generalizes the role played by the rational function given by the difference of the  Rademacher function and the classical Dedekind sums. 
We associate  an exponential sum to the generalized Dedekind sums using the integrality of the generalized Rademacher function. 
We show that this exponential sum has a nontrivial bound that is sufficient to fulfill  Weyl's equidistribution criterion and
thus the fractional part of the generalized Dedekind sums are equidistributed. 
As an example, for a 3 dimensional case and Zagier's higher dimensional generalization of Dedekind sums, we compute the Laurent polynomials associated.
%
%
%A higher dimensional generalized Dedekind sum is defined as a sum of products of
%values of periodic Bernoulli functions. These sums are essentially coefficients of
%Todd series of lattice cones. We give a precise relation between them. Then using the
%cocycle property of normalized Todd series, we prove the integrality of generalized
%Dedekind sums and obtain a formula for reduction of them, which is given by
%an integral Laurent polynomial. We also prove the equidistribution of
%fractional parts of generalized Dedekind sums.
%
\end{abstract}

\maketitle

%%%%%%%%%%%%%%%%%%%%%%%%%%%%%%%%%%%%%%%%%%%%%%
\tableofcontents

\section{Introduction}

\subsection{Dedekind sums and Rademacher's $\phi$-function}
Dedekind sums are rational numbers $s(a,c)$ defined for a pair of relatively prime integers $(a,c)$. 
It was introduced by R. Dedekind(\cite{Dedekind}) to describe modular transformation of his $\eta$-function:
$$
\eta(\tau) = e^{\frac{\pi i\tau}{12}}\prod_{n=1}^\infty (1-e^{2\pi i n \tau}), \quad\tau\in\frak{h}
$$
Its modular transform under the action of $A=\left(\begin{smallmatrix}a & b \\c & d \end{smallmatrix}\right)\in \SL$
is given by 
$$
\log \eta (A\tau) = \log \eta(\tau) + \frac14\log\{-(c\tau+d)^2\} + \pi i \phi(A),
$$
where $\phi : \SL \to \FQ$ is the Rademacher's $\phi$-function
\begin{equation}\label{dedekind-rademacher}
\phi\left(\begin{smallmatrix} a & b \\ c & d \end{smallmatrix} \right) =
\begin{cases}
 sign(c) \cdot s(a,c) - \frac{1}{12}\frac{a+d}{c},   &\text{if $c\ne 0$} \\
\frac{b}d,& \text{if $c=0$}
\end{cases}
\end{equation}
The Dedekind sum $s(a,c)$ is defined by above formula.

Since $\eta(\tau)$ is the 24th root of the modular discriminant $\Delta(\tau)$ which is a cusp form of weight 12, it is easy to see that $\phi(A)$ is valued in $\frac{1}{12}\FZ$.  
In other words, the Rademacher's $\phi$-function measures the failure of $\eta(\tau)$ being a modular form of weight $1/2$ and Dedekind sum  is the transcendental part of the Rademacher's $\phi$-function.

$\eta(\tau)$ has many applications in diverse disciplines of mathematics such as mathematica physics, arithmetic, geometry and low dimensional topology (eg. \cite{At}, \cite{Hir}, \cite{HZ}, \cite{K-M}, \cite{Me}, \cite{Sczech}, \cite{Siegel}).
Dedekind sums and Rademacher's $\phi$-function appear almost in the same way. 
Actually, many nontrivial properties of Dedekind sums are explained in terms of Rademacher's $\phi$-function.

It is our motivation that  partial zeta values of totally real fields at nonpositive integers have expression involving Dedekind sums and their generalization. 
%The formula is 
The partial zeta function of an ideal $\fb$ of a number field $K$ is defined as
$$
\zeta(s,\fb) = \sum_{\fa\sim\fb} N\fa^{-s}, \quad\text{$Re(s) >1$}
$$
where $\fa$ runs over integral ideals equivalent to $\fb$. 
It is well-known that this function has a meromorphic continuation to entire complex plane admitting only a simple pole at $s=1$. 
The partial zeta function of an ideal is invariant in the class.
The sum of the partial zeta functions over the class group of a number field is  the Dedekind zeta function.

For totally real fields, it is a celebrated theorem of Klingen-Siegel (\cite{Siegel}) that the values $\zeta(1-n,\fb)$ for an ideal $\fb$ of a totally real field $K$ is a rational. 
Let us first restrict our interest on real quadratic fields. 
An ideal $\fb$ can be chosen in its class in such a way that $\fb^{-1}=[1,\omega]$, where $\omega$ is reduced element in the sense of Gauss(i.e. $\omega>1$ and $0<\omega'<1$. Here $\omega'$ denotes the conjugate of $\omega$.).
 Equivalently, $\omega$ has purely periodic negative continued fraction expansion: 
$$
\omega=[[b_0,b_1,\ldots,b_{\ell-1}]]:=b_0 -\cfrac{1}{b_1-\cfrac{1}{\cdots-\cfrac{1}{b_{\ell-1}-\cfrac{1}{\omega}}}}
$$

% or $\omega>1$ and $0<\omega'<1$. Here $\omega'$ denotes the conjugate of $\omega$. 
In \cite{Me}, a theorem of C. Meyer  tells integrality of the partial zeta values at $s=0$. Namely, %the terms of the continued fraction: 
\begin{equation}
 \zeta(0,\fb) = \frac{1}{12} \sum_{i=0}^{\ell-1} (b_i -3).
\end{equation}
On the other hand, Siegel obtains another formula for partial zeta values in terms of Dedekind sums and their generalization. 
In particular,  % depicts the special value 
for $s=0$ we have
 \begin{equation}
  \zeta(0,\fb) = s(a,c) - \frac{a+d}{12c}
\end{equation}
 where $\left(\begin{smallmatrix} a & b \\ c & d\end{smallmatrix}\right)$
 is the matrix representing the multiplication by the totally positive fundamental unit $\epsilon$ w.r.t. the basis $[1,\omega]$. 
Meyer's theorem is obtained directly by evaluating  $s(a,c)$ using the reciprocity law of Dedekind sums.
Since Dedekind sum is highly nonintegral, it is not apparent to deduce the integrality from Siegel's theorem unlike Meyer's. 
Actually, fractional part of Dedekind sums are equidistributed on the unit interval(cf. \cite{J-L2}, \cite{Myerson}, \cite{Vardi}).

However, if one notices that Siegel's theorem reads simply from \eqref{dedekind-rademacher}
$$
\zeta(0, \fb) = -\phi\left( \begin{smallmatrix} a & b \\ c & d \end{smallmatrix}\right),
$$
then the integrality follows trivially from that of the $\phi$-function.
 
\subsection{Generalization of Dedekind sums in higher degree}
Dedekind sums have generalization by taking periodic Bernoulli function $\tilde{B}_i(x)$ of higher degree instead of 
$((x))=\tilde{B}_1(x)$. 
For $i,j\ge 1$ and $a,c$ relatively prime, we define 
$$
s_{ij}(a,c) := \sum_{k=0}^{c-1} \tilde{B}_i(\frac{k}{c})\tilde{B}_j(\frac{ak}{c}).
$$
These sums are introduced through works of Apostol and Carlitz in study of modular transformation of certain Lambert series(cf. \cite{Ap}, \cite{Carlitz}).
Clearly, these sums are rational.
$i+j$ is called the \textit{weight} of $s_{ij}(a,c)$. 
It is not difficult to see that these sums survive only if the weight is even(Prop.\ref{odd-degree-sum}. See also Cor. 4.2. of  \cite{J-L2}).

For an ideal $\fb$ of a real quadratic field, Siegel gives an explicit formula of $\zeta(1-n,\fb)$ for $n\in \FN$ in terms of
the higher degree generalization of Dedekind sums of weight $2n$ in \cite{Siegel}.
Thus the rationality of $\zeta(1-n,\fb)$ is achieved automatically from that of Dedekind sums. 
What about the integrality? Again the integrality is not clear at all from Siegel's
formula. However there is still similar integrality result that
the denominator of $\zeta(-n,\fb)$ is given independently of $\fb$ investigated through works of many authors(cf. \cite{Coates-Sinott}, \cite{DR}, \cite{J-L4}, \cite{GP}, \cite{Stevens}, \cite{Zagier2}).

For example, one can check directly the integrality from the following formula of Zagier(\cite{Zagier2}):
$$
\zeta(-n,\fb)=\sum_{k=1}^r\sum_{s=0}^{2n} d_{s,n}^{(k)}\Big{(}\frac{B_{2n+2}}{2n+2}\frac{b_k^{2n-s+1}}{2n-s+1}-\frac{B_{s+1}}{s+1}\frac{B_{2n-s+1}}{2n-s+1}\Big{)},
$$
where $d_{s,n}^{(k)}$ is a coefficient of certain power of certain quadratic form with integer coefficient
determined by $\fb$ and $b_k$ is the $k$-th element of the continued fraction of $\omega$.

This is done by nonsingular decomposition of Shintani cone of the ideal and by explicitly writing
the generalized Dedekind sums. 
Later in this article, we emphasize that this is again the consequence of relating a higher degree analogue 
of the Rademacher's $\phi$-function to generalized Dedekind sums. 
It is worth to note that the same reasoning explains the fractional part of $R_{i+j} q^{i+j-2} s_{ij}(a,c)$ is equidistributed
for some integer $R_{i+j}$ determined by the weight(\cite{J-L2}). 
% generalizing formulae of the Meyer and Siegel. Siegel obtained 

\subsection{Cocycle property}
These explicit formulae are all involving the terms of continued fraction of a reduced element representing the ideal.
The classical Dedekind sum and the Rademacher's $\phi$-function can be recovered from the area cocycle or the signature cocycle(eg. \cite{Asai}, \cite{K-M}, \cite{Sczech}). 
These are cocycles defined for $\SL$ and a continued fraction can be taken as sequence of $\SL$ moves.
A continued fraction is a particular nonsingular decomposition of a lattice cone in $\Lambda=\FZ^2$.
A lattice cone corresponds to a 1-simplex in $\FZ^2$, whose 0-faces are primitive lattice vectors. 
So it is natural to reconstruct the reciprocity and other properties of classical and generalized Dedekind sums from the cocycle property over singular complex consisting of lattice cones in $\FZ^2$.
Since the slope of a lattice vector corresponds to a cusp of $\frak{h}$ the upper-half plane, one can consider these cocycles defined for modular symbols of Manin(\cite{Manin}) and Stevens(\cite{Stevens}). 
Similar approaches are taken in papers by Solomon and  Sczech thru diverse context(\cite{Solomon},\cite{Sczech}).
The singular cocycle is obtained by assigning Todd power series in 2 variables  to a lattice cone. 
It is a 2-variable generalization of the classical Todd series which generate the Bernoulli numbers (up to sign): 
$$
\td(z): = \sum_{i=0}^{\infty} \frac{B_i}{i!}(- z)^i = \frac{z}{1-e^{-z}},\quad |z|<2\pi.
$$
Replacing $z$ with $\partial_z$, one obtains a differential operator  of infinite order which gives the Euler-Mclaurin summation formula.
For lattice cones, Todd series  is generalized to have several variables where the number corresponds
to the rank of the lattice, in such a way to yield the Euler-Maclaurin formula for a lattice polytope in higher dimension
in effort to count the number of geometric quantizations.  
The definition we follow appears in a paper by Brion and Vergne(\cite{BV}). 
To be precise, we refer the reader to Sec.\ref{section:Todd}.
Then the following assignment is a $(n-1)$-cocycle:
$$
\sigma \mapsto S_\sigma = \frac{\td_\sigma}{\prod_{i=1}^n \left<\sigma_i, \left(
\begin{smallmatrix} 
x \\  y 
\end{smallmatrix}
\right)
\right>}\circ{\sigma^{-1}}
$$
The cocycle condition is nothing but additivity w.r.t. barycentric decomposition of lattice cones(See \cite{Pom2}). 
For 2-dimensional cones, barycentric decomposition coincides with concatenation of two lattice cones sharing a ray.

The Todd series in 2 variables generates the Dedekind sums of higher degree as well as the classical ones.
If $\sigma$ is equivalent to the cone generated by $(1,0)$ and $(p,q)$ by change of basis of $\FZ^2$(indeed, any two 
dimensional lattice cone can be made so),  
$$
\td_{\sigma} (x,y) = \sum_{i,j\ge 0} \frac{t_{ij}(\sigma)}{i!j!} x^i y^j
$$
where 
$$t_{ij}(\sigma) = 
\begin{cases}
-(-q)^{i+j-1}\left( s_{ij}(p,q)+B_i B_j\right), & \text{if $i=1$ or $j=1$}\\
- (-q)^{i+j-1}s_{ij}(p,q), & \text{otherwise}.
\end{cases}
$$ 
%Here, $\delta(i,j)=1$ if either $i$ or $j$ is 1. Otherwise $\delta(i,j)=0$.  
In this context, 
the classical reciprocity formula for swapping $a$ and $c$ is nothing but writing down the cocycle condition for the decomposition of
the 1st quadrant as lattice cone generated by $(1,0)$ and $(0,1)$  into two by putting the lattice vector $(a,c)$.

\subsection{Distribution of Dedekind sum}
It is Rademacher who posed a question about distribution of Dedekind sums(\cite{RG}). 
In \textit{loc. cit.}, % a book of Rademacher and Grosswald, 
it is asked if the set $\left\{(\frac{p}{q}, s(p,q))\in \FR^2| (p,q)=1 \right\}$ is dense in $\FR^2$. 
The density result is proved by Hickerson(\cite{Hi}). 
Much later, Vardi  in \cite{Vardi} proves for any nonzero real $\kappa$ the fractional part of $\{\kappa \cdot s(a,c)\}$ are equidistributed on the unit interval $I=[0,1)$ in the sense of H. Weyl(\cite{Weyl}). 
In \cite{Myerson}, Myerson shows that the fractional part of  $\{(\frac{p}{q}, s(p,q))\in \FR^2| (p,q)=1 \}$ is equidistributed in $I\times I$ using similar method as Vardi. 
They identify exponential sum of Dedekind sums with (generalized) Kloosterman sums, which has a sufficiently good bound of Weil type thanks to a work of Selberg(\cite{Selberg}). 
This fulfills the Weyl's criterion for equidistribution. 
In particular, for $\kappa=12$, it is easily done by the integrality of the Rademacher's $\phi$-function. 
From 
\begin{equation}\label{rademacher_phi}
\phi\left( \begin{smallmatrix}a & b \\ c & d \end{smallmatrix} \right) = s(a,c) - \frac{a+d}{12 c} \in 
\frac{1}{12}\FZ,
\end{equation}
we have identified the exponential sum of Dedekind sums with the Kloosterman sum:
\begin{equation}
\sum_{a \in (\FZ/c\FZ)^*} \exp\left(2\pi i \left(12 s\left(a,c\right)\right)\right) = \sum_{a\in (\FZ/c\FZ)^*}\exp\left(\frac{2\pi i}c \left(a  + a^{-1}\right)\right).
\end{equation}

We emphasize that this has the same origin as the previously mentioned integrality result of the partial zeta values at $s=0$ of Meyer. 
Here  $\kappa=12$ turns out to be the universal denominator of partial zeta values of ideals of real quadratic fields for $s=-1$.

In a recent work of Jun-Lee(\cite{J-L2}), 
they extend the universal denominator to Dedekind sums generalized to higher degree.
Namely, for even integer $N=i+j$ the weight of generalized Dedekind sums,  there exists a certain integer $R_N$ determined by 
$N$ such that
\begin{equation}\label{high_integral}
c^{i+j-1} s_{ij}(a,c) - \frac{\alpha_N r_N}{R_N} \left({N-1\choose{i}} a^i + {{N-1}\choose{j}} a'^j\right) \in \frac{1}{R_N} \FZ.   
\end{equation}
Here $a'$ is a multiplicative inverse of $a$ modulo $c$. $\alpha_N$, $r_N$ are integers given by $N$. 
A representative of $a'$ in $\FZ$ uniquely determines another integer $b$ such that $a a' - bc=1$. 
One may take the formula $\eqref{high_integral}$ as definition of $\phi_{ij}(A)$: ``higher degree generalization of the Rademacher's $\phi$-function''
for $A=\left(\begin{smallmatrix} a & b \\ c & d  \end{smallmatrix}\right)\in \SL$.
%
%\begin{framed}
%-- ??¡§?¢³¡Ä??? higher Rademacher phi function ?¢Ó¡Ê??¡Í ????¢³¡Ä.
%\end{framed}
%
Again we have a  formula  analogous to \eqref{rademacher_phi} and can associate an exponential sum similar to the Kloosterman sum to higher degree Dedekind sums 
as follows:
\begin{equation}
\begin{split}
\sum_{a \in (\FZ/c\FZ)^*}& \exp\left(2\pi i (R_N c^{N-1} s(a,c)\right)\\
 =& \sum_{a\in (\FZ/c\FZ)^*}\exp\left(2\pi i \alpha_N r_N 
\left({N-1 \choose i} a^i   + {N-1 \choose j} a'^{-1}\right)\right).
\end{split}
\end{equation}
Thanks to work of Denef and Loeser on the weight of the $\ell$-adic
cohomology giving the exponential sum(\cite{DL}), this sort of exponential sum shas good Weil bound.
Again the equidistribution in $[0,1)$ of the fractional part of   $R_N c^{N-1} s(a,c)$ turns out to be a consequence of the 
integrality as \eqref{high_integral}.

\subsection{Higher dimensional generalization}
In this paper we are interested in higher dimensional generalization of Dedekind sums as well as 
the higher degree generalization, aiming to study partial zeta values of totally real fields. 
We will investigate the relevant integrality 
as we see from the coefficients of Todd series in 2 variables.  
Recently, in \cite{C-D}, similar line of integrality is investigated by  Charollois and Dasgupta: they show the integrality of 
$\ell$-smoothed version of higher dimensional Dedekind sums on the way to showing that of $\ell$-smoothed partial zeta values at negative integers for totally real number fields.
%related to 

%Dedekind sum has generalization to higher dimension. 

Higher dimensional Dedekind sums arised first in topological situation. 
The following cotangent sum  associated to a lattice vector $(p_1,\ldots,p_{n-1},q)$ such that $(p_i,q)=1$ for $i=1,2,\ldots,n-1$:
\begin{equation}\label{tri}
d(q;p_1,p_2,\ldots,p_{n-1}) = (-1)^{\frac{n}2}\sum_{k=1}^{q-1} \cot \frac{\pi k }{q} \cot\frac{\pi p_1 k }{q}\cdots\cot\frac{\pi p_{n-1}k}{q}.
\end{equation}
appear as the defect of Hirzebruch's signature formula for a manifold with a finite group action on it(cf. \cite{Hir}, \cite{A-Singer}).

This trigonometric sum is called higher dimensional Dedekind sum by Zagier(\cite{Zagier1}). 
For even $n$, the rationality is obvious. Also this sum vanishes for $n$ odd due to oddity of cotangent function.
%It is direct to the rationality for $n$ even.
%As the cotangent function is odd, the sum vanishes for $n$ odd.
Their arithmetic properties especially the bound for denominators are studied in detail in \textit{loc. cit.}.
If $n=2$, this sum is identified with the classical Dedekind sum here in the following way:
$$
d(q;p)=4q\cdot s(p,q)
$$

Using the periodic Bernoulli function $\tilde{B}_1(x) = ((x))$, we can relate cotangent sums to higher dimensional generalization of Dedekind sums as defined below:
\begin{equation}\label{Zagier-Dedekind_sums}
\begin{split}
 s_{1,\ldots,1} (q;p_1,\ldots,p_{n-1}) &:= \sum_{ k_1,\ldots,k_{n-1}\in \FZ/q\FZ} ((\frac{k_1}{q} ))\cdots(( \frac{k_{n-1}}{q} ))(( \frac{\sum_{i=1}^{n-1}p_i k_{i}}{q} )) \\
 & =   \frac{(-1)^{\frac{n}2 + 1}}{2^n q}d(q;p_1,\ldots,p_{n-1})
\end{split}
\end{equation}
The subscript $1,1,\ldots,1$ will be justified soon below.

Replacing $\tilde{B}_1(x)$ with $\tilde{B}_i(x)$, we define  its higher degree generalization. % to higher degree.
%Let us define the definition of 
Generalized Dedekind sums in higher dimension are defined as follows:
\begin{defn}\label{definition-s}
Let $(i_1,\ldots, i_n)\in \FN^n$ and $(p_1,\ldots,p_{n-1}, q)\in\FZ^n$ such that $(p_i,q)=1$ for every $i$.
The generalized Dedekind sum of $((p_1,\ldots,p_{n-1}, q)$ of index $(i_1,\ldots, i_n)$ is a rational number
$$
s_{i_1,\ldots, i_n} (q;p_1,\ldots, p_{n-1}) 
:=  \sum_{ k_1,\ldots,k_{n-1}\in \FZ/q\FZ} \tilde{B}_{i_1}(\frac{k_1}{q})\cdots\tilde{B}_{i_{n-1}}( \frac{k_{n-1}}{q} )\tilde{B}_{i_n}( \frac{\sum_{i=1}^{n-1}p_i k_{i}}{q} ).
$$
\end{defn}

These sums are recovered as coefficient of the Todd series of a lattice cone in higher dimension. 
For precise definition, we refer the reader to \S 2.
The lattice cone is given by $e_1,\ldots, e_{n-1}, (p_1,\ldots,p_{n-1},q)$. 
Let us  denote by $\td_{(q;p_1,\ldots,p_{n-1})}(x_1,\ldots,x_n)$
the corresponding Todd series: 
$$
\td_{(q;p_1,\ldots,p_{n-1})} (x_1,\ldots,x_n) := \sum_{i_1,\ldots,i_n\in \FN} \frac{t_{i_1,\ldots,i_n} (q;p_1,\ldots,p_{n-1}) }{i_1!\cdots i_n!} x_1^{i_1}\cdots x^{i_n}_n
$$
Then 
$$
t_{i_1,\ldots,i_n} (q;p_1,\ldots,p_{n-1})  = s_{i_1,\ldots,i_n} (q;p_1,\ldots,p_{n-1}) ,\quad\text{for $i_1,\ldots,i_n>1$}.
$$
If some $i_k=1$, this should be corrected by generalized Dedekind sums in lower dimension(See Thm.\ref{relation-t-s}).
Again their reciprocity law(eg. \cite{FY}, \cite{Zagier1}) is a consequence of the cocycle property of the Todd series.

\subsection{Main result}
The main result in this paper is to find the fractional part of the generalized Dedekind sums in higher dimension so that we have analogue of the Rademacher's function in full generality. 
We obtain a formula of the form of \eqref{high_integral} relevant to $s_{i_1,\ldots,i_n}(q;p_1,\ldots,p_{n-1})$.
%
% The resulting property is pretty same as the role of Rademacher's $\phi$-function. 
As earlier, it is given as the difference of generalized Dedekind sum and  a certain rational function.
% and generalized Dedekind sum of given index. 
The difference  turns out to have  bounded denominator  depending not on the argument but on the weight only
as we see $12$ from the classical Dedekind sums and the Rademacher's $\phi$-function.
%The denominator which appeared for the classical Dedekind sums and the Rademacher's $\phi$-function is $12$. 
We are going to compute explicit bound for the denominator
 of the difference for the generalized Dedekind sums of arbitrary index.
% depending only on 
%the weight. 
Namely, the denominator $d_N$ is given by the formula:
\begin{equation}
d_N := \underset{\substack{m_1+\cdots+m_n = N\\ m_1 ,\ldots, m_n \ge 0}}{lcm}\left\{\text{denominator of $\prod_{i=1}^n \frac{B_{m_i}}{m_i}$} \right\}
\end{equation}
Then $d_N$ fits into our main theorem:
\begin{thm}
Let $(r_1,\cdots,r_n) \in \FN^n$ and $N=\sum_{i=1}^n r_i$.
Let $(q; p_1,\cdots,p_{n-1}) \in \FZ^n$ satisfying $(p_i,q)=1$ for $i=1,\ldots,n-1$. %be as in Definition \ref{definition-s}.
Then there exists a constant $d$ depending only on $N$ (the weight) and $n$ (the dimension) such that
$s_{r_1,r_2,\cdots,r_n}(q;p_1,p_2,\cdots,p_{n-1})$ multiplied by
$d q^{N-n+1}/(r_1!\cdots r_n!)$ is an integer and we have
\begin{equation}\label{eq-intro}
\frac{d q^{N-n+1}}{r_1! \cdots r_n!}\; s_{r_1,\cdots,r_n}(q;p_1,\cdots,p_{n-1}) \; \equiv \;  
\sum_{\bm } (- d )\prod_{i=1}^{n}\frac{B_{m_i}}{m_i!}\binom{m_i -1}{r_i-1} 
p_i^{m_i - r_i} \mod q .
\end{equation}
Here the summation is  over the set of $n$-tuples $\bm = (m_1,\cdots, m_n)$
of non-negative even integers with $\sum_{i=1}^n m_i = N$ such that 
at least one of its coordinates is zero. (We have put $p_n = -1$ for ease of notation
and $B_m$ denotes the $m$-th Bernoulli number.)
\end{thm}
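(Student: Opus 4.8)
The plan is to compute $s_{r_1,\dots,r_n}(q;p_1,\dots,p_{n-1})$ directly from Definition~\ref{definition-s} and to reduce the resulting integer, after the normalization $d\,q^{N-n+1}/(r_1!\cdots r_n!)$, modulo $q$. Writing $\ell=\sum_{i=1}^{n-1}p_ik_i$ and $\bar\ell$ for its residue in $\{0,1,\dots,q-1\}$, each factor is a Bernoulli polynomial at a rational of denominator $q$: for $0\le k<q$ one has $\tilde B_r(k/q)=B_r(k/q)$, with the single correction $\tilde B_1(0)=0$ versus $B_1(0)=-\tfrac12$ when some $r_i=1$, which I would track separately as it only shifts the sum by terms that are visibly absorbed into the $m_i=0$ contributions. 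Expanding via $B_r(x)=\sum_{a=0}^{r}\binom{r}{a}B_a x^{r-a}$ turns the sum into a $\FZ$-linear combination, indexed by $(a_1,\dots,a_n)$, of the power sums
\begin{equation*}
\Sigma(a)=\sum_{k_1,\dots,k_{n-1}=0}^{q-1}\Big(\prod_{i=1}^{n-1}k_i^{\,r_i-a_i}\Big)\,\bar\ell^{\,r_n-a_n},
\end{equation*}
weighted by $q^{-(N-\sum a_i)}$ and by $\prod_i\binom{r_i}{a_i}B_{a_i}$. Thus everything reduces to evaluating $\Sigma(a)$ to sufficient $q$-adic precision.

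The key arithmetic input is the power-sum congruence $\sum_{k=0}^{q-1}k^{t}\equiv B_t\,q\pmod{q^2}$, valid for all $t\ge0$, which follows from Faulhaber's formula $\sum_{k=0}^{q-1}k^t=\tfrac1{t+1}\sum_{j=0}^{t}\binom{t+1}{j}B_j q^{\,t+1-j}$ by isolating the lowest-order ($j=t$) term. Each of the $n-1$ free summations emits one factor of $q$ through this congruence, and the normalization $q^{N-n+1}/(r_1!\cdots r_n!)$ is calibrated exactly to cancel the polar $q^{-(N-\sum a_i)}$ against these factors, leaving an integer modulo $q$. Carrying out the $q$-power bookkeeping, the surviving residues are precisely those in which at least one slot emits no factor of $q$, i.e. contributes its pure Bernoulli constant; relabelling the resulting even exponents as $m_1,\dots,m_n$ with $\sum m_i=N$, the binomial coefficients $\binom{r_i}{a_i}$ recombine after reindexing into $\binom{m_i-1}{r_i-1}$, accompanied by $p_i^{\,m_i-r_i}$, where for a slot with $m_i=0$ the negative exponent is read as the inverse of $p_i$ modulo $q$ supplied by $(p_i,q)=1$ (and $p_n=-1$). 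The vanishing of odd Bernoulli numbers forces each $m_i$ even, and the requirement that some $m_i=0$ is exactly the $q$-power-balance condition. Taking $d=d_N$, the $\mathrm{lcm}$ of the denominators of the products $\prod_iB_{m_i}/m_i$, the von Staudt--Clausen theorem shows these denominators depend only on $N$ and $n$, giving both integrality and the displayed congruence.

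The main obstacle is the coupling. Unlike the $n-1$ free variables, the last factor involves $\bar\ell=\ell\bmod q$ rather than $\ell$ itself, and $\bar\ell^{\,s}\equiv\ell^{\,s}$ holds only modulo $q$, whereas the cancellation above requires $\Sigma(a)$ modulo powers of $q$ as large as $q^{n}$. A naive replacement $\bar\ell\mapsto\ell$ followed by the multinomial expansion and the power-sum congruence collapses every term into a single one with all $a_i=0$, which is plainly wrong: the additional terms of the theorem arise exactly from the $q\lfloor\ell/q\rfloor$ corrections to $\bar\ell$. I would resolve this in one of two equivalent ways. The first decouples the variables by the finite Fourier expansion $\tilde B_{r_n}(\bar\ell/q)=-\tfrac{r_n!}{(2\pi i)^{r_n}}\sum_{h\neq0}h^{-r_n}\prod_{i=1}^{n-1}e^{2\pi i hp_ik_i/q}$, after which each $k_i$-sum is an independent twisted Bernoulli sum whose exact evaluation restores the missing $q$-powers and the correct $p_i$-powers.

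The second route, more natural in the framework of this paper, bypasses the coupling entirely: realize $s_{r_1,\dots,r_n}$ as the Todd coefficient $t_{r_1,\dots,r_n}$ of the cone $\langle e_1,\dots,e_{n-1},(p_1,\dots,p_{n-1},q)\rangle$ via Theorem~\ref{relation-t-s}, apply the cocycle/additivity property to a nonsingular subdivision, and evaluate each unimodular piece as an explicit product of $B_m/m!$; reduction modulo $q$ then annihilates every piece whose defining rays involve $q$, leaving precisely the boundary contributions recorded on the right-hand side. Either way the crux is controlling these higher-order corrections in the coupled variable; once that is done, matching the $r_i=1$ boundary terms of the periodic Bernoulli functions and verifying that $d_N$ clears all denominators are routine.
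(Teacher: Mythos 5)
Your proposal correctly isolates the central difficulty --- the coupled variable $\bar\ell=\langle\sum_i p_ik_i\rangle_q$ must be controlled to $q$-adic precision far beyond the mod-$q$ identity $\bar\ell^s\equiv\ell^s$ --- but it does not resolve it, and this is a genuine gap rather than a routine omission. The Fourier fix is a single sentence asserting that the twisted Bernoulli sums' ``exact evaluation restores the missing $q$-powers and the correct $p_i$-powers''; that restoration \emph{is} the content of the theorem, so as written the argument is circular. Nothing in the sketch produces the specific shape of the right-hand side: the coefficients $\binom{m_i-1}{r_i-1}$, the support condition that some $m_i=0$, and above all the negative exponents $p_i^{m_i-r_i}$. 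Note that your direct expansion is polynomial in $p_1,\dots,p_{n-1}$ (since $\ell=\sum p_ik_i$ and the corrections are multiples of $q\lfloor\ell/q\rfloor$), so inverse powers of $p_i$ modulo $q$ cannot appear without an additional device --- either a change of summation variable exploiting the symmetric form $\sum_{i=1}^n p_ik_i\equiv 0 \pmod q$, or the geometry of the paper, where $p_j^{-1}$ enters through the dual vectors of the outer cones, $(v_i^{(j)*})_B\equiv x_i-p_ip_j^{-1}x_j \pmod q$. Also, your key arithmetic input is false as stated: $\sum_{k=0}^{q-1}k^t\equiv B_tq\pmod{q^2}$ fails already for $q=2$, $t=2$ (the difference is $2/3$, of $2$-adic valuation $1$); it only holds after clearing Bernoulli denominators by a weight-dependent constant, which shows the $q$-adic bookkeeping is more delicate than your calibration claim suggests, and this same issue infects the integrality assertion, which in the paper needs its own argument (Thm.\ref{integrality}, via unique factorization in $\FZ[x_1,\dots,x_n]$ and the localization $\fS^{-1}\FZ[x_1,\dots,x_n]$), not just the congruence.

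Your fallback route is indeed the paper's strategy, but its key mechanism is mischaracterized. It is not true that reduction mod $q$ ``annihilates every piece whose defining rays involve $q$'': the surviving outer cones $D_j$ with $j<n$ contain the ray $w_n=(p_1,\dots,p_{n-1},q)$. What the paper proves (Prop.\ref{inner-sum} and Thm.\ref{integrality}) is nearly the opposite: the \emph{inner} cones' total $S^N_{inner}(C)$ has denominator a product of polynomials that remain primitive mod $q$, so after multiplication by the prefactor $qx_1\cdots x_n$ it vanishes in $\FZ/q\FZ$, whereas the outer cones survive precisely because their denominators contain the non-primitive factor $qx_j=(v_j^{(j)*})_B$. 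Extracting the stated right-hand side from the outer terms then requires the iterated constant term formalism over $\FZ/q\FZ$ (Appendix~B, where $\fA$-admissibility and the order-dependence of the flag must be checked), the disjoint decomposition of the index set into the sets $M_j$, and finally an induction on the dimension $n$ using Thm.\ref{relation-t-s} to pass from the Todd coefficients $t_{\br}$ to the sums $s_{\br}$ when some $r_i=1$ --- a step you defer as ``routine'' but which is where the restriction to even $\bm$ in the final statement actually gets established. In short, both routes in your proposal stop exactly where the real work begins.
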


Basically, our method is pretty the same as in \cite{J-L2} beside some nontriviality arising in dealing with higher dimension. 
We identify the generalized Dedekind sums in higher dimension as coefficient of the Todd series of a lattice cone in a `normal form'(See Sec.\ref{section:Dedekind}).
Then we make explicit computation of the Todd series  for a nonsingular decomposition of the cone using its cocycle property. %and nonsingular decomposition of the cone. 
The normalized Todd series has poles along the hyperplanes generated by the facets. 
While we decompose the cone, the normalized Todd series written for the decomposition acquires new poles supported on the hyperplanes generated by inner facets of the decomposition. 
These poles  are `removable singularities' and will  cancel themselves. 
The contribution of the inner cones to the generalized Dedekind sum is trivial by mod $q$ reduction. 
Explicit value is obtained by computing the coefficient of the monomial of each normalized Todd series of the boundary cones of nonsingular decomposition.
As we are dealing with higher dimension,  unlike to 1-variable case, we don't have well-defined notion of residue at a point. 
But in this case, we have to replace the point with Parshin point
%Then we may take the iterated residue w.r.t. 
given by fixing the order of the coordinate hyperplanes(cf. \cite{B,Parshin,Szenes}). % of a fixed order which define a Parshin point. 
The `iterated coefficient' does not depend on this choice of Parshin point as the Todd series is meromorphic with poles along the coordinate hyperplanes. 
As we are taking the residue modulo $q$,   
the validity of iterated residue for rational functions with general commutative ring coefficient need to be discussed in App. B.  
In this way, we will prove the main theorem.
%%undefined but the several variable analogue of the Cauchy residue theorem using Parshin residue is applicable.   
%As  we are dealing with rational functions with general commutative ring here, special care is needed. This issue is discussed in App. B.

As a corollary of the main theorem, we associate an exponential sum of certain Laurent polynomial to the generalized Dedekind sums in higher dimension. 
The main theorem is rephrased as
$$
\left<\frac{dq^{N-n}}{r_1!\cdots r_n!} s_{r_1,\ldots,r_n} (q;p_1,\ldots,p_{n-1}) \right> = 
\left<\frac{1}{q} f_{r_1,\ldots,r_{n}}(p_1,\ldots,p_{n-1})\right>
$$
where $\left<t\right>= t-[t]$ and $f_{r_1,\ldots,r_{n}}(p_1,\ldots,p_{n-1})$ is a Laurent polynomial in $p_1,\ldots, p_{n-1}\pmod{q}$.

This enables us to check the equidistribution of the left hand side by estimating the exponential sum of the right hand side.
Again  it is the dimension that makes the estimate nontrivial.
We need to estimate the exponential sum of 
the Laurent polynomial obtained above, which we denote by $K(f,q)$:
$$
K(f,q)= \sum_{p_1,\ldots,p_{n-1}\in(\FZ/q\FZ)^*} \exp\left(\frac{2\pi i }q f(p_1,\ldots, p_{n-1}) \right).
$$

For $n=2$, when we find the Kloosterman sum and its generalizations, we could apply the purity theorem of Denef-Loeser(\cite{DL}, see also \cite{J-L2}).
The crucial part of applying Denef-Loeser is the nondegeneracy of the Newton polytope at infinity associated to the Laurent
polynomial, which turns out to be highly nontrivial condition to check  in higher dimension. 
Nonetheless, we have a crude estimate that is far bigger than the best possible(Prop.\ref{prop:6.4}):
$$
|K(f,p)|\le C p^{(n-1)-\frac12}\quad \text{for $p$ prime}.
$$
%and 
%$$
%|K(f,q)|\le
%$$

This estimate relies on the condition (H) above Thm.\ref{equidistribution}. 
By (H),  the nondegeneracy condition is replaced with
much milder one but checkable at a glance of the Newton polytope. 
Namely,  the condition (H) is  
the nondegeneracy of \cite{DL} in codimension $1$ for certain variable.
Fortunately, this bound turns out to be sufficient to fullfil the Weyl's equidistribution criterion for generalized Dedekind sums in higher dimension(Thm.\ref{equidistribution}):
\begin{equation*}
\lim_{x \rightarrow \infty} \frac{1}{|I_n(x)|} 
\sum_{(p_1,\cdots,p_{n-1},q) \in I_n(x)} 
\exp\left(\frac{2\pi i k}{q}f(p_1,\cdots,p_{n-1})\right) = 0 .
\end{equation*}
Here, $I_n(x)$ is the set $\{(p_1,\ldots,p_{n-1}, q)\in \FN^n| p_i < q, (p_i,q)=1, q\le x \}$.

As the Laurent polynomial $f_{r_1,\ldots,r_n}(p_1,\ldots,p_n)$ associated to 
%The exponential sum obtained has nontrivial bound, which is sufficient to fulfill Weyl's equidistribution criterion for
%the fractional part of 
\begin{equation}
\frac{d q^{N-n}}{r_1! \cdots r_n!}\; s_{r_1,\cdots,r_n}(q;p_1,\cdots,p_{n-1}).
\end{equation}
fulfils the condition (H),
\textit{a priori}, the equidistribution theorem (Thm.\ref{thm-equidist}) is obtained.

%As a corollary, we will show that the e
A particular case tells that the Zagier-Dedekind sums of \eqref{Zagier-Dedekind_sums}, if nontrivial, after multiplication of some integer, are equidistributed in the unit interval when we take the fractional part.

%\begin{framed}
%-- Weil bound ?¢Ó¡Ê??¡Í????¢³¡Ä.
%\end{framed}

This paper is composed as follows: The definition of Todd series of lattice cones and the formulation of Todd cocycle 
are given in \S 2. A precise relation between coefficients of Todd series and generalized 
Dedekind sums are given in \S 3, which will be used in the subsequent sections 
to deduce properties of latter inductively from those of former. The integrality of Todd 
coefficients and generalized Dedekind sums are shown in \S 4. 
A formula for reduction mod $q$ of generalized Dedekind sums is given in \S 5.
In \S 6, we prove the equidistribution of fractional parts of generalized Dedekind sums by estimating the exponential sum
of associated Laurent polynomial.
Finally,  in \S 7 we write explicitly the Laurent polynomials for two cases: a case of generalized Dedekind sums in 3-dimension and Dedekind-Zagier sums. % and a case of 3-dimensional generalized Dedekind sum.  

\section*{Notations}
\begin{itemize}
\item $\bi! := (i_1!)\cdot(i_2!)\cdots(i_n!), \;\;
\bx^\bi := x_1^{i_1} x_2^{i_2}\cdots x_n^{i_n}$ 
for $\bi=(i_1,i_2, \ldots, i_n) \in \FZ_{\geq 0}^n$.
\item $|\bi| := i_1 + i_2 + \cdots + i_n,\; \; \bone := (1,1,\cdots,1)$.
\item $\la t\ra:= t - [t]$,  the fractional part of $t$.
\item $I_n :=\{ \; (q;p_1,\cdots,p_{n-1}) \in \FZ^n_{>0} \; | \;
p_1,\cdots,p_{n-1} < q \ \textrm{relatively prime to}\  q \}$.
\item $C(q;p_1,\cdots,p_{n-1})$ : cone corresponding to 
$(q;p_1,\cdots,p_{n-1}) \in I_n$ (Ex.\ref{exam-dual-cone}).
\item $M_C, \; \Lambda_C,\; \Gamma_C,\; \chi^C_i,\; P_C$
for a lattice cone $C$ (\S \ref{lattice-cone}).
\item $s_{r_1,r_2,\cdots,r_n}(q;p_1,p_2,\cdots,p_{n-1})$ : 
generalized Dedekind sum (Def.\ref{definition-s}).
\item $t_{r_1,r_2,\cdots,r_n}(q;p_1,p_2,\cdots,p_{n-1})$ : 
Todd coefficient (Def.\ref{definition-t}).
\item $\td_C(x_1,\ldots,x_n)$ : Todd series of a cone $C \subset \FR^n$
(Eq.\eqref{defn-todd}).
\item $\td_C^N(x_1,\ldots,x_n)$ : the homogeneous part of total degree $N$ of the above.
\item $\td(x_1,\ldots,x_n), \; \td^N(x_1,\ldots,x_n)$ : 
the above two objects corresponding to a nonsingular lattice cone 
(Eq.\eqref{todd-non-singular}, \eqref{todd-N}).
\item $d_{N,n}$ : the denominator of $\td^N(x_1,\ldots,x_n)$ 
(Def.\ref{definition-d}).
\item $S_C(x_1,\ldots,x_n)$: normalized Todd series
of a cone $C \subset \FR^n$ (Def.\ref{defn-n-todd}).
\item $S_C^N(x_1,\ldots,x_n)$: the homogeneous part of total degree $N-n$ of the above.
\item $T(C), \ S(C)$ : functions given by (normalized) Todd series
(Def.\ref{todd-as-functions}).
\item $T^N(C), \ S^N(C)$ : their homogeneous part of total degree $N$ and $N-n$, respectively (Def.\ref{todd-as-functions}).
\item $S_i^N(C) , \ S_o^N(C)$ : decomposition of $S^N(C)$ corresponding to a subdivision of $C$
(Eq.\eqref{decomposition-i-o}).
\item $(\ )_B$ : Let $f$ is a rational function on a vector space $V$.
For an ordered basis $B$ of $V$,
$f_B$ denotes the rational function given in coordinates with respect to $B$.
\item Bernoulli numbers $B_k$  are fixed by the generating function
$$
B(z)= \frac{z}{e^z -1} = \sum_{k=0}^\infty \frac{B_k}{k!} z^k.
$$
\item $B_{\bi} := B_{i_1}B_{i_2}\cdots B_{i_n}$ 
for $\bi=(i_1,i_2, \ldots, i_n) \in \FZ_{\geq 0}^n$
\item Bernoulli polynomials 
are defined by the generating function
\begin{equation}\label{bernoulli polynomial}
\FB(x)(z) := \frac{ze^{xz}}{e^z -1} = \sum_{k=0}^\infty \frac{B_k(x)}{k!} z^k.
\end{equation}
$B_k(x)$ is a polynomial of degree $k$ and $B_n(0)=B_n$. 
\item 
The $k$-th periodic Bernoulli function $\wB_k(t)$ for $k\ge 0$ is  
defined to be a function on $\FR$ of period $1$ by putting the values on $[0,1)$ as
$$
\wB_k(t) =\begin{cases} B_k(t), &\text{for $t \in (0,1)$} \\
B_k(0), & \text{for $t=0$ and $k>1$}\\
0, &\text{for $k=1$ and $t=0$}.
 \end{cases}
$$
\end{itemize}

%%%%%%%%%%%%%%%%%%%%%%%%%%%%%%%%%%%%%%%%%%%%%%

\section{Todd series}\label{section:Todd}

\subsection{Lattice cones}\label{lattice-cone}
Consider the standard lattice $\FZ^n$ in $\FR^n$. 
We will introduce the notion of lattice cones with simplicial structure.
A \textit{$m$-simplicial lattice cone} is an ordered $m$-tuple $(v_1,v_2,\ldots,v_m)$ 
of primitive lattice vectors in $\FZ^n$ such that the convex hull of $\{v_1,\ldots,v_m\}$ 
does not contain the origin.
We denote the simplicial cone
of $(v_1,v_2,\ldots,v_m)$ by $\co(v_1,v_2,\ldots, v_m)$.
Since we will deal only with lattice cones in this paper, 
we often abbreviate lattice cones to  cones. % instead of lattice cones. 
Nonetheless, note that many of definitions below apply to general cones 
which are not necessarily lattice cones.
%Let $C$ be the cone $\co(v_1,v_2,\ldots, v_m)$.
The underlying topological space of $C=\co(v_1,v_2,\ldots,v_m)$
is a closed subset of $\FR^n$
$$
\left|C\right|=\left| \co(v_1,v_2,\ldots,v_m)\right|:= \FR_{\ge 0} v_1 + \ldots + \FR_{\ge 0} v_m.
$$
Note that $\left|C\right|$ is a manifold with corner and
$|C|$ does not determine $C$.
The $i$-th \textit{face} of $C$ is the $(m-1)$-simplicial cone 
$$
C(i):=\co(v_1,\ldots, \hat{v}_i, \ldots, v_m).
$$
A $m$-dimensional cone $C$ % $\co(v_1,v_2,\ldots,v_m)$ 
is said to be \textit{degenerate}(resp. \textit{nondegernate}) if 
$
\dim |C| < m$(resp. 
$\dim |C| = m$).
%Otherwise, it is sai
If $m>n$, then a $m$-simplicial cone is necessarily degenerate by dimension reason.
We have an obvious action of $g\in \GL_n(\FZ)$ on the set of lattice cones, by 
$(v_1,\ldots,v_m)\mapsto (g v_1, \ldots, g v_m)$. 
Nondegeneracy is preserved under $\GL_n(\FZ)$-action.

Let $C=\co(v_1,v_2,\ldots,v_n)$ be a nondegenerate $n$-simplicial lattice cone.
We define following objects corresponding to $C$.
\begin{itemize}
\item An $(n\times n)$ integral matrix $M_C  = (v_1 | v_2|\cdots| v_n)$
where we take $v_i$ as  column vectors in $\FZ^n$
\item A sublattice $\Lambda_C = \sum_{i=1}^n \FZ v_i$ of $\FZ^n$ 
and the quotient group $\Gamma_C = \FZ^n/\Lambda_C$
\item An $n$-tuple of  characters $(\chi^C_1,\ldots, \chi^C_n)$ on $\FZ^n$ 
(or on $\Gamma_C$):
$$ \chi^C_j(v) := \exp ( 2\pi i a_j)  \quad  \textrm{ if } v = \sum_{j=1}^n a_j v_j  $$
\item  The fundamental parallelepiped $P_C$ of the torus $\FR^n/\Lambda_C$:
$$
P_C := \left\{ \; \sum_{i=1}^n a_i v_i \; \big| \; a_i\in [0,1)\quad \text{for $i=1,\ldots,n$}\; \right\}
$$
\end{itemize}
In this notation, a simple cone is said to be \textit{nonsingular} if $|\det (M_C)|=1$ 
or equivalently $\Lambda_C = \FZ^n$. 
Note that nonsingularity is preserved  and the characters $\chi^C_i$ 
of $C$ are invariant under $\GL_n(\FZ)$-action. 
The orientation of $C$ is the sign of $\det(M_C)$.

If there appears only a single simple cone $C$, we will often abbreviate $M_C$, $\Lambda_C$, 
$\chi^C_i$ and $\Gamma_C$ to $M$, $\Lambda$, $\chi_i$ and $\Gamma$, respectively.

\subsection{Chain complex of lattice cones}\label{lattice cones}
Let $\sC_k$ be the free abelian group of lattice cones generated by 
$k$-simplicial cones in $\FR^n$.
By a $k$-dimensional lattice cone, we mean an element of $\sC_k$.
The set of lattice cones make chain complex with obvious boundary operation.
Namely, for a $m$-simplicial cone $C = \co(v_1,\ldots, v_m)$, 
its boundary is a $(m-1)$-dimensional cone
$$
\partial C := \sum_{i=1}^m (-1)^{i+1} C(i).
$$
The boundary operation extends to $\sC_\bullet = \oplus_m \sC_m$.
Then $(\sC_\bullet,\partial)$ is a chain complex:
\begin{equation*}\begin{split}
\cdots \to \sC_{k+1} \xrightarrow{\partial} \sC_{k} 
\xrightarrow{\partial} \sC_{k-1}\xrightarrow{\partial} \cdots,
\quad(\partial^2 = 0)
\end{split}\end{equation*}

Let $A$ be an abelian group. 
As in a standard text in algebraic topology, by a $k$-cocycle of simplicial cones with values
in $A$, we mean an additive functional $\Phi:\sC_k \to A$,
which vanishes on boundaries (i.e. $\Phi |_{\partial(\sC_{k+1})} = 0 $).

A subdivision of a $k$-simplicial cone $C$ by a primitive lattice vector $v$ 
means the following $k$-chain:
$$
\operatorname{sbdiv}(C,v) := C  + (-1)^{k} \partial(C,v) %- (-1)^{k+1} C
$$
Here $(C,v)$ means a $(k+1)$-simplicial lattice cone generated by the basis of $C$ and $v$. 
Thus $\Phi$ being a cocycle  is equivalent to saying that
$$
\Phi(C) = \Phi(\operatorname{sbdiv}(C,v)).
$$

It is well known that a nondegenerate $n$-simplicial lattice cone $C$ admits a subdivision
into sum of nonsingular lattice cones. In other words, applying the above procedure consecutively,
we can express $C$ in $\sC_n$ as a linear combination of nonsingular cones 
modulo $\partial(\sC_{n+1})$. We remark that this notion is more general than the usual
set theoretic subdivision. A standard procedure to obtain such a (set theoretic) subdivision 
is explained in \cite{Fulton}: If $C$ is singular (i.e. if $|\det(M_C)| > 1$), 
then the fundamental parallelepiped $P_C$ contains a nonzero (primitive) lattice vector. 
Subdividing $C$ using this vector, we obtain cones with smaller determinants. 
We repeat this until individual cones have smallest possible size.

\subsection{Dual cones}
For a nondegenerate lattice cone $C=\co(v_1,\ldots, v_n)$ in $\FR^n$, 
let us define its dual lattice cone $\check{C}=\co(u_1,\ldots, u_n)$ lying in 
$\operatorname{Hom}(\FR^n,\FR)\simeq \FR^n$. 
Geometrically, $u_i$ is given as the primitive inward normal vector to the $i$-th face 
$C(i)=\co(v_1,\ldots,\hat{v_i},\ldots,v_n)$. 
We will write the dual vectors $u_i$ as row vectors in $\FZ^n$, and similarly
we define the matrix  $M_{\check{C}}$ of $\check{C}$ 
as the $(n\times n)$-matrix whose $i$-th row is $u_i$.
It can be written as a product of a diagonal matrix 
with positive diagonal entries and $M_C^{-1}$.

\begin{exam}\label{exam-dual-cone}
Let $(q;p_1,p_2,\cdots,p_{n-1}) \in I_n$.
To identify the generalized Dedekind sums, 
we need to consider the cone $C=\co(v_1,v_2,\ldots, v_n)$ with
$v_i= e_i$ for $i=1,\ldots, n-1$ and $v_n=(p_1,p_2,\ldots, p_{n-1}, q)$
where $e_i$ is the $i$-th standard unit vector in $\FR^n$.
To fix notations for later use, let us denote this cone by
$C(q;p_1,p_2,\cdots,p_{n-1})$.
Note that $v_i$ are primitive and the generators of
the dual cone $\check{C}=\co(u_1,\ldots, u_n)$ are
\begin{equation*}
\begin{split}
u_1 &= (q, 0, 0, \ldots, 0, -p_1)\\
u_2 &= (0, q, 0, \ldots, 0, -p_2)\\
&\vdots\\
u_{n-1} &= (0,0,0,\ldots, q, -p_{n-1})\\
u_n &= (0,0,0,\ldots, 0, 1) .
\end{split}
\end{equation*}
\end{exam}

\subsection{Todd series}
Let $C = \co(v_1,v_2,\ldots,v_n)$ be a nondegenerate lattice cone in $\FR^n$.
Define the \textit{Todd series} of $C$ as 
\begin{equation}\label{defn-todd}
\td_C(x_1,\ldots,x_n):= 
\sum_{\gamma\in \Gamma_C}\prod_{i=1}^n \frac{x_i}{1-\chi^C_i(\gamma) e^{-x_i}}.
\end{equation}
Then $\td_C(x_1,\ldots,x_n)$ is holomorphic at a neighborhood of 0 in $\FC^n$.
The variables $x_1,\cdots,x_n$ in (\ref{defn-todd}) should be viewed as
coordinates with respect to $\{v_1,\cdots,v_n\}$(See \S \ref{ss-todd-cocycle} below).
For a degenerate cone $C$, $\td_C(x_1,\ldots,x_n)$ is set to be  $0$. 

The Todd series is invariant under $\GL_n(\FZ)$-action on cones
due to the invariance of the characters of the cone.
In particular, the Todd series of nonsingular lattice cones in $\FR^n$
are all equal to the \textit{Todd power series in $n$ variables}:
\begin{equation}\label{todd-non-singular}
\td(x_1, x_2,\cdots,x_n) = \prod_{i=1}^{n} \frac{x_i}{1-e^{-x_i}} 
= \sum_{\br}(-1)^{|\br|}
\frac{B_{\br}}{\br!} \bx^{\br}
\end{equation}
where the second summation is over $\br = (r_1,r_2,\cdots,r_n) \in \FZ_{\geq 0}^n$
(See \textsc{Notations}).

Since the summation of the values of $\chi_i^C$ has galois invariance, 
it is easy to see that the Taylor series of $\td_C$ has coefficients in $\FQ$.
\begin{equation*}
\td_C(x_1,\ldots,x_n) = \sum_{\br}\frac{\sS_\br(C)}{\br !} \bx^\br
\end{equation*}
with the summation over $\br = (r_1,r_2,\cdots,r_n) \in \FZ_{\geq 0}^n$.
In the following section, we will see that 
$\sS_{\br}(C)$ is closely related to the higher dimensional generalized Dedekind sums.

For a nonnegative integer $N$, let $\td_C^N(x_1,\cdots,x_n) \in \FQ[x_1,\cdots,x_n]$ 
be the homogeneous part of the total degree $N$ of  $\td_C(x_1,\ldots,x_n) $. 
It is called the \textit{$N$-th Todd polynomial} of $C$.
It is the partial sum over $|\br| = N$ 
of the above sum  and is given by 
\begin{equation*}
\td_C^N(x_1,\cdots,x_n) = \left.\frac{1}{N!}\frac{\partial^N}{\partial t^N}
\td_C(t x_1, t x_2, \cdots, t x_n) \right|_{t=0} \; .
\end{equation*}
The homogeneous part $\td^N(x_1,\cdots, x_n)$ of total degree $N$
of $\td(x_1,\cdots,x_n)$
is defined similarly. It is called the \textit{$N$-th Todd polynomial in $n$ variables}.
\begin{equation} \label{todd-N}
\td^N(x_1, x_2,\cdots, x_n) = (-1)^N\sum_{|\br| = N} \frac{B_{\br}}{\br !}  \; \bx^{\br} 
\end{equation}

\subsection{Todd cocycle}\label{ss-todd-cocycle}

\begin{defn}\label{defn-n-todd}
The  normalized Todd series of an $n$-simplicial lattice cone $C$ in $\FR^n$ is 
the meromorphic function around $0$
$$
S_C(x_1,\ldots,x_n) := \frac{\td_C(x_1,\ldots,x_n)}{(\det M_C) x_1x_2\cdots x_n}
$$
in $\FC^n$ with poles along the coordinate hyperplanes.
\end{defn}

For a nonnegative integer $N$, let $S_C^N(x_1,\cdots,x_n)$ be the homogeneous
part of total degree $N-n$. Of course, it is given by
$$
S^N_C(x_1,\ldots,x_n) = \td_C^N(x_1,\ldots,x_n)/(\det M_C) x_1x_2\cdots x_n.
$$

To deal with Todd series for various cones in $V = \FR^n$ simultaneously, 
it is necessary to view $T_C(x_1,\cdots,x_n)$ and $S_C(x_1,\cdots,x_n)$
as functions on $V$ (or on $V\otimes \FC$) by taking
variables $x_1, x_2, \cdots, x_n$ in the above definition 
as coordinates on $V$ with respect to the ordered basis $\{v_1, v_2, \cdots, v_n\}$
if $C = \co(v_1, v_2, \cdots, v_n)$ is nondegenerate. 
Let us denote these functions by $T(C)$ and $S(C)$, respectively:

\begin{defn}\label{todd-as-functions}
Let $C = \co(v_1, v_2, \cdots, v_n)$ be an $n$-simplicial nondegenerate
lattice cone in $V = \FR^n$.
Define meromorphic functions $T(C)$ and $S(C)$ on $V_{\FC} = V\otimes \FC$ by
\begin{align*}
T(C)  : x_1 v_1 + \cdots + x_n v_n & \mapsto 
\td_C(x_1,\cdots,x_n) \\
\ S(C): x_1 v_1 + \cdots + x_n v_n & \mapsto    S_C(x_1,\cdots,x_n)
\end{align*}
For a nonnegative integer $N$, the homogeneous polynomial $T^N(C)$ and
the homogeneous rational function $S^N(C)$ on $V$ are defined similarly.
%\begin{equation*}
%T^N(C) \;\; (\textrm{resp.} \ S^N(C)) : x_1 v_1 + \cdots x_n v_n \mapsto 
%\td_C^N(x_1,\cdots,x_n) \;\; (\textrm{resp.} \  S_C^N(x_1,\cdots,x_n)) \; .
%\end{equation*}
\begin{align*}
T^N(C)  : x_1 v_1 + \cdots + x_n v_n & \mapsto 
\td^N_C(x_1,\cdots,x_n) \\
\ S^N(C): x_1 v_1 + \cdots + x_n v_n & \mapsto    S^N_C(x_1,\cdots,x_n)
\end{align*}

\end{defn}

We remark that $S^N(C)$ can be obtained from $S(C)$ (in a coordinate-free way):
for $v \in V$, $S^N(C)(v)$ is the coefficient  of $t^{N-n}$ of the Laurent polynomial
$S(C)(tv)$ in one variable $t$.

%\begin{framed}
\begin{remark}
Let $y_1, y_2, \cdots, y_n$ be coordinates with respect to the standard basis of $V$.
Then the function $S(C)$ is given by
$ S_C((y_1,\cdots, y_n)(M_C^{-1})^T) \in \FQ((y_1,\cdots,y_n))$
in terms of these coordinates.
\end{remark}
%\end{framed}

The following proposition, which we call ``the cocycle property of Todd series'',
is a restatement of \cite[Thm.3]{Pom2} in frame of this article. % language.

\begin{prop}[Pommersheim]
The association $\Phi: C \mapsto S(C) $
is an $n$-cocyle of simplicial lattice cones in $V = \FR^n$ 
with values in the space of meromorphic functions on $V_{\FC}$.
\end{prop}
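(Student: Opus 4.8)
The plan is to check the two defining properties of an $n$-cocycle. Additivity is automatic: since $\sC_n$ is the free abelian group on $n$-simplicial cones, the rule $C \mapsto S(C)$ extends uniquely to a homomorphism $\Phi \colon \sC_n \to A$ into the space $A$ of meromorphic functions on $V_\FC$. The substance is vanishing on boundaries, i.e. that for every $(n+1)$-simplicial lattice cone $D = \co(w_1,\ldots,w_{n+1})$ in $V=\FR^n$ one has $\Phi(\partial D) = \sum_{i=1}^{n+1}(-1)^{i+1} S(D(i)) = 0$, where $D(i)=\co(w_1,\ldots,\widehat{w_i},\ldots,w_{n+1})$. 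Degenerate faces drop out immediately, since $\td$ of a degenerate cone is declared to be $0$; so only the nondegenerate $n$-dimensional faces contribute. By the discussion of \S\ref{lattice cones}, this is equivalent to the subdivision invariance $S(C)=S(\operatorname{sbdiv}(C,v))$ for an $n$-cone $C$ and a primitive ray $v$, which is the form I would actually verify.

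The conceptual engine is that $S$ is a \emph{valuation} of integral (Euler--Maclaurin) type. Reading the variables as coordinates with respect to the generators, the normalized Todd series of a nonsingular cone is $\prod_i (1-e^{-x_i})^{-1}/\det M_C$, whose leading term $\prod_i x_i^{-1}/\det M_C$ is precisely the Laplace transform $\int_{|C|} e^{-\la \xi,\cdot\ra}$ of the solid cone, the remaining terms being its higher Euler--Maclaurin corrections. The decisive feature of this integral-flavoured functional, as opposed to the discrete lattice-point sum, is that it is additive under subdivision and, crucially, \emph{vanishes on cones of dimension $<n$}: a lower-dimensional solid cone has $n$-dimensional measure zero, and the whole functional built on that integral vanishes there. (Indeed, already for $n=2$ the discrete sum over a ray $\tfrac{1}{1-e^{\la\xi,w\ra}}$ does not vanish, which is exactly why the discrete sum fails to be a cocycle while $S$ succeeds.) Thus $\mathbbm{1}_{|C|}\mapsto S(C)$ descends to a linear map on the algebra of cone indicator functions annihilating lower-dimensional cones.

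Granting this, the boundary identity is pure geometry. The $n+1$ vectors $w_1,\ldots,w_{n+1}$ satisfy a linear relation, and its Radon partition produces an inclusion--exclusion identity among the indicator functions $\mathbbm{1}_{|D(i)|}$ in which every correction term is supported on a lower-dimensional wall $\co(\ldots,\widehat{w_i},\ldots,\widehat{w_j},\ldots)$. (In the generic case one generator lies in the cone on the others, and the identity is just the subdivision $\mathbbm{1}_{|D(j)|} = \sum_{i\ne j}\mathbbm{1}_{|D(i)|}$ modulo walls.) Feeding this into the valuation kills every wall and leaves exactly $\sum_i (-1)^{i+1}S(D(i))=0$.

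The hard part is making the valuation property rigorous as an identity of \emph{meromorphic} functions on $V_\FC$, not merely of the convergent series and integrals on overlapping domains. Concretely, the individual $S(D(i))$ carry poles along the hyperplanes spanned by the inner walls of the subdivision, whereas $S(C)$ does not; one must verify that these spurious poles cancel in the alternating sum and that the several analytic continuations agree. This pole-cancellation, together with the vanishing on degenerate cones, is precisely the assertion of \cite[Thm.~3]{Pom2}. I would therefore either derive it from the additivity of the exponential-integral valuation above, or --- the route taken here --- quote Pommersheim's theorem and check only that our normalization of $S(C)$ (division by $\det M_C\cdot x_1\cdots x_n$ and reading the variables as coordinates relative to the ordered basis $\{v_1,\ldots,v_n\}$) agrees with his, so that the cocycle statement transports directly.
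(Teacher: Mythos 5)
Your proposal ultimately rests on citing \cite[Thm.~3]{Pom2} and checking that the normalization of $S(C)$ (coordinates read with respect to the ordered generators, division by $\det M_C\, x_1\cdots x_n$) matches Pommersheim's, which is exactly what the paper does: the proposition is stated as ``a restatement of \cite[Thm.3]{Pom2} in frame of this article'' with no independent proof given. Your valuation/Laplace-transform discussion is motivational rather than load-bearing, so the two approaches coincide.
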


\begin{cor}\label{todd-poly-cocycle}
Let $N$ be a nonnegative integer.
The association $\Phi: C \mapsto S^N(C) $
is an $n$-cocyle of simplicial lattice cones in $V = \FR^n$ 
with values in the space of rational functions on $V_{\FC}$.
\end{cor}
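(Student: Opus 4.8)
The plan is to deduce the corollary from Pommersheim's proposition by observing that passing to the homogeneous part of total degree $N-n$ is an additive operation, and that additive (hence $\FQ$-linear) operations preserve the cocycle property. In other words, I will realize $S^N$ as the composite of $S$ with a fixed linear projection onto homogeneous pieces, so that there is essentially nothing to prove beyond bookkeeping.

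Concretely, for a meromorphic function $f$ on $V_\FC$ whose restriction $t \mapsto f(tv)$ to each line through the origin is a Laurent series in $t$ with a pole only at $t=0$, write $\pi_{N-n}(f)$ for the function $v \mapsto [t^{N-n}]\,f(tv)$, the coefficient of $t^{N-n}$. By the coordinate-free remark preceding the proposition, $S^N(C) = \pi_{N-n}(S(C))$. Two points need checking. First, $\pi_{N-n}$ takes values in rational functions: if $x_1 v_1 + \cdots + x_n v_n = v$, then $S(C)(tv) = t^{-n}\,\td_C(tx_1,\ldots,tx_n)/\big((\det M_C)\,x_1\cdots x_n\big)$, and since $\td_C$ is holomorphic at $0$ the numerator is a power series in $t$; extracting the coefficient of $t^{N-n}$ yields $\td_C^N(x_1,\ldots,x_n)/\big((\det M_C)\,x_1\cdots x_n\big) = S_C^N$, a homogeneous rational function of degree $N-n$. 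Second, $\pi_{N-n}$ is additive, since both the map $f \mapsto (t\mapsto f(tv))$ (for fixed $v$) and coefficient extraction $[t^{N-n}](\cdot)$ on Laurent series are $\FQ$-linear.

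With these in hand the conclusion is immediate. By Pommersheim's proposition, $\Phi: C \mapsto S(C)$ is an additive functional $\sC_n \to \{\text{meromorphic functions on } V_\FC\}$ that vanishes on $\partial(\sC_{n+1})$. Hence the composite $S^N = \pi_{N-n}\circ S$ is an additive functional $\sC_n \to \{\text{rational functions on } V_\FC\}$. For any $(n+1)$-simplicial chain $D$ we have $S(\partial D) = 0$ as a meromorphic function, so for generic $v$ the Laurent series $S(\partial D)(tv)$ vanishes identically in $t$, and taking the coefficient of $t^{N-n}$ gives $S^N(\partial D)(v) = 0$. Thus $S^N$ vanishes on boundaries and is therefore an $n$-cocycle with values in rational functions.

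The argument is entirely formal: the cocycle relation is a family of $\FQ$-linear identities among the functions $S(C)$, and such identities hold degree-by-degree in the grading by homogeneity. Accordingly there is no genuine obstacle, and the only points deserving a moment's care are exactly the two verified above, namely that the homogeneous decomposition is the \emph{same} for all cones because each $S(C)$ is regarded as a function on the fixed space $V_\FC$ (rather than in cone-dependent coordinates), and that the degree-$(N-n)$ part is rational rather than merely meromorphic. Both are settled by the explicit formula $S_C^N = \td_C^N/\big((\det M_C)\,x_1\cdots x_n\big)$.
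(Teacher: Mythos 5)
Your proposal is correct and matches the paper's (implicit) argument: the paper states this corollary without proof, relying precisely on the remark that $S^N(C)(v)$ is the coefficient of $t^{N-n}$ in the Laurent expansion of $S(C)(tv)$, so that extracting the homogeneous part is a fixed, coordinate-free, $\FQ$-linear operation applied to the cocycle $C \mapsto S(C)$. Your write-up simply makes explicit the two bookkeeping points (rationality of the degree-$(N-n)$ part and additivity of the extraction) that the paper leaves to the reader.
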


%%%%%%%%%%%%%%%%%%%%%%%%%%%%%%%%%%%%%%%%%%%%%%%%

\section{Dedekind sums and Todd coeffients}\label{section:Dedekind}

Let $(q;p_1,\cdots,p_{n-1}) \in I_n$ (See \textsc{Notations}).
Consider the cone $C= C(q;p_1,\cdots,p_{n-1})$
and  its dual lattice cone $\check{C}=\co(u_1,\ldots, u_n)$ as
given in Example \ref{exam-dual-cone}. Recall the generators of $C$ are
$v_i= e_i$ for $i=1,\ldots, n-1$ and $v_n=(p_1,p_2,\ldots, p_{n-1}, q)$.
In this case, we have 
$u_i = q v_i^*$ for $1 \leq i \leq n$ where $\{v_1^*, \cdots, v_n^*\}$ is 
the basis dual to $\{v_1,\cdots,v_n\}$ (i.e. $\left<v^*_i, v_j\right>=\delta_{ij}$).
We would like to identify the coefficient of the Todd series of $C$ 
using generalized Dedekind sums. Expanding the denominators in (\ref{defn-todd}),
we have
\begin{equation}\label{first-expansion}
\td_C(x_1,\ldots,x_n) 
= x_1 x_2 \cdots x_n \sum_{m\in\Gamma_C} \sum_{\ell_1,\ldots,\ell_n=0}^\infty \chi_1(m)^{\ell_1}\cdots \chi_n(m)^{\ell_n} e^{-\ell_1 x_1} \cdots e^{-\ell_n x_n}.
\end{equation}
This series converges absolutely for totally positive $(x_1,\ldots,x_n)$,
but can be continued analytically to a neighborhood of $0$, since
the Todd series itself is analytic at $0$. % has positive radius of convergence.

Notice that $m\mapsto \chi_1(m)^{\ell_1}\cdots \chi_n(m)^{\ell_n}$ 
is again a character on $\Gamma_C$. 
Let us denote this character by $\chi_{\ell_1\ldots\ell_n}$.
As 
$\chi_i(m) = \exp\left(2\pi i \langle m, \frac{u_i}q\rangle\right)
$,
we have
$$
\chi_{\ell_1\ldots\ell_n}(m)=
\exp\left(2\pi i \left< m, \sum_{i=1}^n\ell_i\frac{u_i}q\right>\right) .
$$
In the summation over $m \in \Gamma_C$ in (\ref{first-expansion}), 
we will use a common trick of exponential sums:
$$
\sum_{m\in \Gamma_C} \chi_{\ell_1,\ldots,\ell_n} (m) =
\begin{cases}
\left|\Gamma_C\right| = q, &  \text{if $\chi_{\ell_1,\ldots,\ell_n}$ is trivial} \\
0, &\text{otherwise}
\end{cases}
$$
Note $\chi_{\ell_1,\ldots,\ell_n}$ is trivial if and only if 
$\sum_{i=1}^n \ell_i \frac{u_i}q $ is a lattice vector in $\FZ^n$. 
Since we have
$$
\left\{ \sum_{i=1}^n \ell_i \frac{u_i}q \; \big| \; \ell_i \in \FZ_{\ge 0} \right\}
= \frac{1}{q} \Lambda_{\check{C}} \supset |C^\vee| \cap \FZ^n \supset \Lambda_{\check{C}},
$$
by summing over $m \in \Gamma_C$ first in (\ref{first-expansion}),
we may rewrite $\td_C$ as summation over the lattice points inside $C^\vee$:
\begin{equation}\label{second-expansion}
\td_C\left(x_1,x_2,\cdots,x_n\right) = 
qx_1x_2\cdots x_n \sum_{m\in |\check{C}|\cap\FZ^n}
e^{-\sum_{i=1}^n\langle m,v_i \rangle x_i},
\end{equation}

\begin{remark}
Similar argument shows that the above equation holds for any nondegenerate
lattice cone $C = \co(v_1,\cdots,v_n)$ if we replace $q$ in the equation
by $|\det M_C| = |\Gamma_C|$. 
\end{remark}

The right hand side of (\ref{second-expansion}) is defined 
for $(x_1,\ldots,x_n) \in \FR_{\geq 0}^n$ 
and is analytically continued to a neighborhood of $0$.
Any lattice point $u \in |\check{C}|\cap \FZ^n$ can be written uniquely as 
$u = w + i_1 u_1 + \cdots + i_n u_n$ with $w \in P_{\check{C}} \cap \FZ^n$
and $i_1, \cdots, i_n \in \FZ_{\geq o}$. The set of lattice points in the fundamental parallelepiped
for $\Lambda_{\check{C}}$ is given by
$$
P_{\check{C}}\cap\FZ^{n} = 
\left\{ \; 
\sum_{i=1}^{n-1} \frac{k_i}{q} u_i + 
\left<\frac{p_1 k_1 + \cdots + p_{n-1} k_{n-1}}{q}\right> u_n  \Big|  \quad\text{for $k_i= 0,1,\ldots,q-1$}  \; \right\} \;\; ,
$$
and we can write the right hand side of (\ref{second-expansion}) as
\begin{equation*}
\begin{split}
&= q x_1\cdots x_n \sum_{u \in {P_{C^\vee}\cap \FZ^n}} e^{-\sum_{i=1}^n \langle u , v_i\rangle  x_i}\sum_{i_1,\ldots,i_n=0}^\infty e^{-i_1 q x_1} e^{-i_2 q x_2} \cdots e^{-i_n q x_n} \\
&= q^{-n+1} 
\sum_{k_1,\ldots,k_{n-1}=0}^{q-1} \left(  \frac{q x_1 e^{-k_1 x_1}}{1-e^{-qx_1}} \right)\left(
\frac{q x_2 e^{-k_2 x_2}}{1-e^{-qx_2}} \right)
\cdots \left(\frac{q x_{n-1} e^{-k_n x_{n-1}}}{1-e^{- q x_{n-1}}}\right)
\left(\frac{q x_n e^{- \la\frac{\sum_{i=1}^{n-1} p_i k_i}{q}\ra   q x_n}}{1-e^{-q x_n}}\right) \\
&= q^{-n+1} \sum_{k_1,\ldots,k_{n-1}=0}^{q-1} 
\FB
\left(\frac{k_1}{q}\right)\left(-qx_1\right) \FB\left(\frac{k_2}{q}\right) \left(-qx_2\right) 
\cdots \FB\left(\la\frac{\sum_{i=1}^{n-1} p_i k_i}{q}\ra\right) \left(-qx_n\right)
\end{split}
\end{equation*}
where $\FB(x)(z)$ denotes the generating function of Bernoulli polynomials given 
by (\ref{bernoulli polynomial}). Expanding the above further using Bernoulli polynomials,
we obtain another expression of the Todd series
\begin{equation*}
\begin{split}
&= q^{-n+1} \sum_{\bj\in\FZ_{\ge 0}}%^\infty_{j_1,\ldots,j_n =0} 
\sum_{k_1,\ldots,k_{n-1}=0}^{q-1}
\frac{B_{j_1}\left(\frac{k_1}q \right) \cdots B_{j_{n-1}}
\left(\frac{k_{n-1}}q\right) B_{j_n} 
\left(\la\frac{\sum_{i=1}^{n-1} p_i k_i}{q}\ra\right)}
{ \bj !
%j_1!\cdot j_2!\cdots j_n!
}
(-q x_1)^{j_1} \ldots (-q x_n)^{j_n}\\
&= 
\sum^\infty_{N=0} \sum_{|\bj|=N}
%{j_1+\ldots+j_n =N}
 \sum_{k_1,\ldots,k_{n-1}=0}^{q-1} (-1)^{N %j_1+\cdots+j_n
 } 
 q^{N -n +1 %j_1+\cdots+j_n-n+1 
 }
\frac{B_{j_1}\left(\frac{k_1}q \right) \cdots B_{j_{n-1}}\left(\frac{k_{n-1}}q\right) 
B_{j_n} \left(\la\frac{\sum_{i=1}^{n-1} p_i k_i}{q}\ra\right)}%
{\bj!%j_1!\cdot j_2!\cdots j_n!
}
\vec{x}^{\bj}
%x_1^{j_1} \cdots x_n^{j_n}
\end{split}
\end{equation*}
whose coefficients are very closed to the generalized Dedekind sums.
Here, $\bj = (j_1,\ldots, j_n)$.

\begin{defn}\label{definition-t}
For $\bj=(j_1,\cdots, j_n) \in \FZ_{\geq 0}^n$ and $(q;p_1,\cdots,p_{n-1}) \in I_n$,
we define the Todd coefficient by
\begin{equation*}
t_{\bj %j_1,\ldots, j_n
}(q;p_1,\ldots,p_{n-1}) =
\sum_{k_1,k_2,\ldots,k_{n-1}=0}^{q-1} 
B_{j_1}\left(\frac{k_1}q \right)  \cdots B_{j_{n-1}}\left(\frac{k_{n-1}}q\right) 
B_{j_n} \left(\la\frac{\sum_{i=1}^{n-1} p_i k_i}{q}\ra\right).
\end{equation*}
\end{defn}

Thus the Todd series of $C$ is written as
\begin{equation}\label{todd-t}
\td_C(x_1,\ldots,x_n)
=  
\sum^\infty_{N=0} \sum_{|\bj|=N} (-1)^{N} q^{N-n+1}
\frac{t_{\bj}(q;p_1,\ldots,p_{n-1})}{\bj!}
\vec{x}^\bj %x_1^{j_1} \cdots x_n^{j_n}
.
\end{equation}
Note that $t_{\bj}(q;p_1,\ldots,p_{n-1})$ remains unchanged if we replace
$p_i$ by any number in the same congruence class modulo $q$.
Also we have the  vanishing of Todd coefficients of odd weight in the next two propositions.
These generalize Cor.4.2 in \cite{J-L2} to arbitrary dimension $n$.

\begin{prop}\label{vanishing-t}
If the total degree $N = |\bj|$ 
%  j_1 + \cdots + j_n$
is odd, % and $j_1, \cdots, j_n > 1$ 
then we have
$$
t_{\bj}(q;p_1,\ldots,p_{n-1}) = 0.
$$
\end{prop}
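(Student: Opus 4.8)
The plan is to exploit the reflection symmetry of Bernoulli polynomials, $B_k(1-x)=(-1)^kB_k(x)$, which at the level of the periodic Bernoulli functions of the Notations reads $\wB_k(-t)=(-1)^k\wB_k(t)$ for every $t\in\FR$. Writing $N=|\bj|=j_1+\cdots+j_n$, I want to exhibit an involution on the index set of the defining sum that multiplies each summand by $(-1)^N$. Once that is done, re-indexing gives $t_{\bj}(q;p_1,\ldots,p_{n-1})=(-1)^N\,t_{\bj}(q;p_1,\ldots,p_{n-1})$, and since $N$ is odd this forces the sum to equal its own negative, hence to vanish.

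Concretely I would run the substitution $k_i\mapsto q-k_i$ for $i=1,\ldots,n-1$, that is $k_i\mapsto(-k_i)\bmod q$, which is a bijection of $\{0,1,\ldots,q-1\}^{n-1}$. Under it each of the first $n-1$ arguments $\tfrac{k_i}{q}$ is sent to $1-\tfrac{k_i}{q}$, contributing a factor $(-1)^{j_i}$ to the corresponding Bernoulli factor, while the last argument transforms as $\la\tfrac{\sum p_ik_i}{q}\ra\mapsto\la-\tfrac{\sum p_ik_i}{q}\ra$ because $\sum p_i(q-k_i)\equiv-\sum p_ik_i\pmod q$, contributing a factor $(-1)^{j_n}$. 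Multiplying the $n$ sign factors gives $(-1)^{j_1+\cdots+j_n}=(-1)^N$, so the transformed summand is $(-1)^N$ times the original, which is exactly what the argument needs. The same involution, applied verbatim to the genuine generalized Dedekind sums $s_{\bj}$ of Definition \ref{definition-s} (which are built from $\wB$), yields their odd-weight vanishing as well, and this is the statement generalizing Cor.~4.2 of \cite{J-L2}.

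The step I expect to be the main obstacle is the behavior at the boundary of the fundamental domain, namely the indices at which one of the arguments equals $0$. There the reflection $x\mapsto 1-x$ does not map the point back into $[0,1)$ (it sends $0$ to $1$), and the naive sign rule fails precisely when the relevant index is $1$, since $B_1(0)=-\tfrac12\neq\tfrac12=B_1(1)$; for all odd $k\ge 3$ one has $B_k=0$, so the only dangerous contributions come from Bernoulli factors of index $1$ evaluated at the argument $0$. The resolution is to read every factor through the periodic Bernoulli function $\wB$, whose normalization $\wB_1(0)=0$ is designed exactly so that $\wB_k(-t)=(-1)^k\wB_k(t)$ holds including at $t=0$; this annihilates the offending boundary terms and makes the involution sign-exact. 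I would therefore isolate the boundary contributions where some argument is $0$ and an index equals $1$, track them explicitly through the substitution, and show that under the periodic convention they drop out, so that the reflection argument applies without exception and the proof closes.
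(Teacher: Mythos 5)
Your reflection computation is fine, but the fix you propose for the boundary terms is exactly where the argument breaks, and it cannot be repaired in the form you state. The Todd coefficient $t_{\bj}$ of Def.\ref{definition-t} is built from Bernoulli \emph{polynomials}; you are not free to ``read every factor through $\wB$'', because doing so changes the quantity being computed from $t_{\bj}$ into the generalized Dedekind sum $s_{\bj}$ of Def.\ref{definition-s}. By Thm.\ref{relation-t-s} these two sums differ precisely by the boundary contributions you are trying to discard, and they coincide only when no index equals $1$. Those offending terms are genuinely part of $t_{\bj}$ and do not drop out; in fact they can make the literal statement fail. For instance, with $n=2$, $q=2$, $p_1=1$ and $\bj=(1,2)$ (odd total degree),
\begin{equation*}
t_{1,2}(2;1) \;=\; B_1(0)B_2(0) + B_1(\tfrac12)B_2(\tfrac12)
\;=\; \bigl(-\tfrac12\bigr)\cdot\tfrac16 + 0\cdot\bigl(-\tfrac1{12}\bigr) \;=\; -\tfrac1{12} \;\neq\; 0 .
\end{equation*}
So no argument can annihilate the boundary terms: the proposition has to be read with the proviso that no $j_k$ equals $1$, which is exactly what the paper's own proof delivers (its conclusion is that the odd-degree part of $\td_C$ is supported on monomials $x_1^{j_1}\cdots x_n^{j_n}$ with some $j_k=1$). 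Your closing paragraph about the sums $s_{\bj}$, where the $\wB$-normalization $\wB_1(0)=0$ really does kill the boundary terms, is correct, but that is Prop.\ref{odd-degree-sum} (proved in the paper by your reflection-plus-induction scheme), not the present statement about $t_{\bj}$.

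The honest salvage of your approach is simply to run the involution $k_i\mapsto (q-k_i)\bmod q$ on $t_{\bj}$ under the hypothesis that every $j_k\neq 1$: then each boundary evaluation satisfies $B_j(0)=(-1)^jB_j(0)$ (trivially for $j$ even or $j=0$, and both sides vanish for odd $j\geq 3$), so the summand at the reflected index equals $(-1)^N$ times the summand at the original index, whence $t_{\bj}=-t_{\bj}=0$ with no induction and no case analysis. Note this is a genuinely different route from the paper's proof of this proposition, which never touches the defining sum: it splits $\frac{x}{1-\chi(\gamma)e^{-x}}=\frac{x}{2}+L^{\chi(\gamma)}(x)$, uses $L^{\lambda}(-x)=L^{\lambda^{-1}}(x)$ to see that $\sum_{\gamma\in\Gamma_C}\prod_i L^{\chi_i(\gamma)}(x_i)$ is an even function, and concludes that the odd part of $\td_C$ can only sit on monomials containing a bare factor $x_i$. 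The series argument explains structurally why the exceptional indices are exactly those with some $j_k=1$; your reflection, once restricted to the correct hypothesis, gives the same vanishing more elementarily.
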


\begin{proof}
The proof is similar to that of Cor.4.2 in \cite{J-L2}.
We can write the function appearing in the definition (\ref{defn-todd}) of Todd series as
\begin{equation*}
\frac{x}{1-\chi(\gamma) e^{-x}} = \frac{x}{2} + L^{\chi(\gamma) }(x)
\quad \textrm{where } \quad
L^{\lambda}(x) = \frac{x}{2}\cdot \frac{1+\lambda e^{-x}}{1-\lambda e^{-x}} \; .
\end{equation*}
If $\lambda \neq 1$, then $L^{\lambda}(x)$ is not an even function.
But since $L^{\lambda}(-x) = L^{\lambda^{-1}}(x)$, the sum
$\sum_{\gamma \in \Gamma_C} L^{\chi(\gamma) }(x)$ is even. 
So is the sum of products 
$\sum_{\gamma \in \Gamma_C} L^{\chi_1(\gamma) }(x_1)\cdots 
L^{\chi_k(\gamma) }(x_k)$.
By expanding the product in (\ref{defn-todd}), 
we see that the odd part of $\td_C$ is the sum of
\begin{equation*}
2^{-k}x_{i_1}x_{i_2} \cdots x_{i_k} \times 
\sum_{\gamma \in \Gamma_C}L^{\chi_{i_{k+1}}(\gamma) }(x_{i_{k+1}} )
\cdots L^{\chi_{i_n}(\gamma) }(x_{i_n})
\end{equation*}
with $k$ odd and $\{i_1,\cdots, i_n\}$ a permutation of $\{1,\cdots, n\}$. 
Notice that $k$ is the number of $x_i$'s of multiplicity $1$. Thus the odd part is supported on monomials $x_1^{j_1}\ldots x_n^{j_n}$ with some $j_k=1$. This finishes the proof.
\end{proof}

Compared to $t_{j_1,\ldots,j_n}$,
the generalized Dedekind sums $s_{j_1,\ldots,j_n}$ 
in Def.\ref{definition-s} are defined using periodic Bernoulli functions $\wB_j(t)$ 
in place of  Bernoulli polynomials $B_j(t)$.
Since $\wB_j(t) = B_j(t)$ on $[0,1)$ if $j >1$, we have
\begin{equation}
s_{j_1,\ldots,j_n}(q;p_1,p_2,\ldots,p_{n-1}) 
= t_{j_1,\ldots,j_n}(q;p_1,p_2,\ldots,p_{n-1})
\quad \textrm{if } j_1,\ldots, j_n >1 .
\end{equation}
Unlike Todd coefficients, all generalized Dedekind sums of odd total degree vanish.

\begin{prop}\label{odd-degree-sum}
If the total degree $N = j_1 + j_2 + \cdots + j_n$ is odd, then
the generalized Dedekind sum $s_{j_1,\ldots,j_n}(q;p_1,p_2,\ldots,p_{n-1})$ vanishes.
\end{prop}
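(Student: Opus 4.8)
The plan is to exploit the parity of the periodic Bernoulli functions together with the sign-reversing substitution $k_i \mapsto -k_i$ on the summation variables. First I would record the functional equation
$\wB_k(-t) = (-1)^k \wB_k(t)$, valid for every $t \in \FR$ and every $k \ge 1$. For $t \notin \FZ$ this follows from the classical symmetry $B_k(1-x) = (-1)^k B_k(x)$ of Bernoulli polynomials together with the periodicity of $\wB_k$: writing $t = m + s$ with $m \in \FZ$ and $s \in (0,1)$, one has $\wB_k(-t) = \wB_k(1-s) = B_k(1-s) = (-1)^k B_k(s) = (-1)^k \wB_k(t)$. The only points needing separate attention are $t \in \FZ$, where $\wB_k(-t) = \wB_k(0) = \wB_k(t)$, so the claimed relation reads $((-1)^k - 1)\wB_k(0) = 0$; this holds trivially for $k$ even and holds for $k$ odd because $\wB_1(0) = 0$ and $B_k = 0$ for odd $k \ge 3$.

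Next I would apply the map $(k_1,\ldots,k_{n-1}) \mapsto (-k_1,\ldots,-k_{n-1})$, which is a bijection of $(\FZ/q\FZ)^{n-1}$ onto itself. Since $\wB_j$ has period $1$, each of the first $n-1$ factors transforms as $\wB_{i_\ell}(-k_\ell/q) = (-1)^{i_\ell}\wB_{i_\ell}(k_\ell/q)$, while the last factor transforms as $\wB_{i_n}(-\sum_i p_i k_i/q) = (-1)^{i_n}\wB_{i_n}(\sum_i p_i k_i/q)$, because the linear form $\sum_i p_i k_i$ changes sign under the substitution. Consequently every summand of $s_{i_1,\ldots,i_n}(q;p_1,\ldots,p_{n-1})$ is multiplied by the common scalar $(-1)^{i_1+\cdots+i_n} = (-1)^N$.

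Reindexing the sum by this bijection then yields $s_{i_1,\ldots,i_n}(q;p_1,\ldots,p_{n-1}) = (-1)^N s_{i_1,\ldots,i_n}(q;p_1,\ldots,p_{n-1})$, so with $N$ odd we get $2\,s_{i_1,\ldots,i_n} = 0$ and hence the vanishing. I expect the only delicate point to be the verification of the parity relation at the integer points $t \in \FZ$ (equivalently, for the terms in which some $k_\ell \equiv 0 \pmod q$), where the piecewise definition of $\wB_k$ must be invoked; once that boundary case is settled, the involution argument is routine and summing over the bijection disposes of the fixed points automatically.
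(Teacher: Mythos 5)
Your proof is correct, and it takes a genuinely different route from the paper's. The paper argues by induction on the dimension $n$: it takes the case $n=2$ from Cor.~4.2 of \cite{J-L2}, decomposes the index set into pieces $K(I)$ according to which coordinates $k_i$ vanish, cancels the terms of $K(\emptyset)$ in pairs $(k_1,\ldots,k_{n-1}) \leftrightarrow (q-k_1,\ldots,q-k_{n-1})$, and disposes of the pieces with $I \neq \emptyset$ either because $\wB_{j_i}(0)=0$ for odd $j_i$ or by recognizing them as lower-dimensional generalized Dedekind sums of odd total degree, to which the induction hypothesis applies. You avoid both the induction and the decomposition by first establishing the parity relation $\wB_k(-t) = (-1)^k\,\wB_k(t)$ uniformly for \emph{every} real $t$ --- the integer points being exactly where the conventions $\wB_1(0)=0$ and $B_k=0$ for odd $k\geq 3$ enter --- and then applying the single involution $(k_1,\ldots,k_{n-1}) \mapsto (-k_1,\ldots,-k_{n-1})$ of $(\FZ/q\FZ)^{n-1}$ to the whole sum, which yields $S = (-1)^N S$ and hence $S=0$ in $\FQ$. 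Both arguments ultimately rest on the same reflection symmetry of Bernoulli functions, but yours packages all boundary cases into one uniform lemma; what this buys is a self-contained proof (no appeal to the external $n=2$ result), no induction, and no case analysis --- in particular, fixed points of the involution cause no trouble, since the identity $S=-S$ comes from reindexing the full sum rather than from pairing individual terms. The paper's inductive formulation, on the other hand, matches the inductive structure it uses elsewhere (e.g.\ in Thm.\ref{relation-t-s} and Cor.\ref{integrality-s}), which is presumably why it was organized that way.
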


\begin{proof}
We will use induction on $n$. Cor.4.2 in \cite{J-L2} is the case when $n=2$.
In Def.\ref{definition-s}, $s_{j_1,\ldots,j_n}(q;p_1,p_2,\ldots,p_{n-1}) $
is defined as a summation over the set $K$ of 
$(n-1)$-tuples of non-negative integers less than $q$.
For each $I \subset \{1, \cdots, n-1\}$, let $K(I)$ be the set of
$(k_1,\cdots, k_{n-1}) \in K$ such that 
$k_i = 0$ for $i \in I$ and $k_i \neq 0$ if $i \not\in I$.
Then $K$ is the disjoint union of $K(I)$'s. We claim that the partial sum
(of the sum in Def.\ref{definition-s}) over each $K(I)$ vanishes.

The $j$-th periodic Bernoulli function $\wB_j(x)$ is even (odd, respectively) 
if $j$ is even (odd, respectively). Hence in the summation over $K(\emptyset)$,
the terms corresponding to $(k_1,\cdots, k_{n-1})$ and
$(q-k_1,\cdots, q-k_{n-1})$ cancel each other. Suppose $I \neq \emptyset$.
If there exist $i \in I$ such that $j_i$ is odd, then the summation over $K(I)$
is zero simply because $\wB_{j_i}(0) = 0$. Otherwise, i.e. if $j_i$ is even
for each $i \in I$, then the summation over $K(I)$ is a generalized Dedekind
sum of odd total degree in fewer variables. Hence it vanishes by induction
assumption. 
\end{proof}

When $j=1$, we still have $\wB_1(t) = B_1(t)$ on $(0,1)$ 
but $\wB_1(0) = 0, B_1(0)=B_1 = - 1/2$.
Hence $t_{i_1,\ldots,i_n}$ and $s_{i_1,\ldots,i_n}$ may be different 
if $1 \in \{i_1,\cdots,i_n\}$. 
A precise relation is given inductively as follows.
Let $(i_1,i_2,\cdots,i_n) \in \FZ^n_{>0}$ and $(q;p_1,\cdots,p_{n-1}) \in I_n$.
We set $p_n = -1$. (This convention will be also used in later sections.
It makes statements, not proofs, of several theorems easier.)
Let $J = \{ \; j \; | \; i_j = 1 \; \}$.
Given a nonempty subset $T$ of $J$, let 
$\{j_1,\cdots,j_r\} = \{1,2,\cdots,n\} \backslash T$ ordered so that $j_1 < \cdots < j_r$. 
For each $j_k$, choose an integer $p_{j_k}^T$ such that
$p_{j_k}^T \equiv -p_{j_r}^{-1}p_{j_k} \mod q$. 
(Hence if $n \in \{1,\cdots,n\} \backslash T$,
then we can and will set $p_{j_k}^T = p_{j_k}$.)

\begin{thm}\label{relation-t-s}
With notations as in the above paragraph, we have
\begin{multline*}
s_{i_1,\ldots,i_n} (q;p_1,\ldots,p_{n-1})
= t_{i_1,\ldots,i_n}(q; p_1,\ldots, p_{n-1}) \\
+ \sum_{\emptyset \ne T\subset J}{\textstyle \left(\frac{1}{2}\right)^{|T|} }
 t_{i_{j_1},\ldots, i_{j_r}} (q; p^T_{j_1},\ldots, p^T_{j_{r-1}}) ,
\end{multline*}
\begin{multline*}
t_{i_1,\ldots,i_n} (q;p_1,\ldots,p_{n-1})
= s_{i_1,\ldots,i_n}(q; p_1,\ldots, p_{n-1}) \\
+ \sum_{\emptyset \ne T\subset J}{\textstyle\left(-\frac{1}{2}\right)^{|T|} }
 s_{i_{j_1},\ldots, i_{j_r}} (q; p^T_{j_1},\ldots, p^T_{j_{r-1}}) .
 \end{multline*}
The second summation is
over nonempty subsets $T \subset J$ with $|T| \equiv N \mod 2$
where $N = i_1 + \cdots + i_n$ is the total degree.
\end{thm}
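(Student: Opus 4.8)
The plan is to prove both identities by directly expanding the defining products and isolating the one place where the periodic Bernoulli functions $\wB_j$ and the Bernoulli polynomials $B_j$ disagree. First I would record the pointwise correction: on $[0,1)$ one has $\wB_j=B_j$ for $j>1$, while for $j=1$ the two agree except that $\wB_1(0)=0$ and $B_1(0)=-\tfrac12$. Thus for every index $j$ and every $x\in\FR$,
$$\wB_{i_j}(x)=B_{i_j}(\la x\ra)+\tfrac12\,\mathbbm{1}_{i_j=1}\,\mathbbm{1}_{x\in\FZ},$$
where $\mathbbm{1}$ denotes an indicator; equivalently $B_{i_j}(\la x\ra)=\wB_{i_j}(x)-\tfrac12\,\mathbbm{1}_{i_j=1}\,\mathbbm{1}_{x\in\FZ}$. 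Only the slots $j\in J=\{j:i_j=1\}$ can produce a correction.

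Next I would substitute the first of these into Definition \ref{definition-s}, noting that the arguments there are $x_j=k_j/q$ for $j\le n-1$ (so $\la x_j\ra=k_j/q$) and $x_n=(\sum_{i=1}^{n-1}p_ik_i)/q$. Expanding $\prod_j\wB_{i_j}(x_j)$ over subsets $T\subseteq J$---taking the correction $\tfrac12\mathbbm{1}_{x_j\in\FZ}$ for $j\in T$ and $B_{i_j}(\la x_j\ra)$ for $j\notin T$---gives a sum $\sum_{T\subseteq J}(\tfrac12)^{|T|}\,\Sigma_T$, where the term $T=\emptyset$ reassembles $t_{i_1,\dots,i_n}$ by Definition \ref{definition-t}. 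For nonempty $T$, the indicators force $k_j\equiv 0$ for $j\in T\cap\{1,\dots,n-1\}$ and, when $n\in T$ (so $i_n=1$), also $\sum_{i=1}^{n-1}p_ik_i\equiv 0\pmod q$.

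The heart of the argument is to recognize each constrained partial sum $\Sigma_T$ as a lower Todd coefficient. Writing $\{1,\dots,n\}\setminus T=\{j_1<\dots<j_r\}$ and imposing the constraints, the surviving factors are $B_{i_{j_1}},\dots,B_{i_{j_r}}$. If $n\notin T$, then $j_r=n$ and the arguments are $k_{j_1}/q,\dots,k_{j_{r-1}}/q$ together with $\la(\sum_{l<r}p_{j_l}k_{j_l})/q\ra$, i.e. $\Sigma_T=t_{i_{j_1},\dots,i_{j_r}}(q;p_{j_1},\dots,p_{j_{r-1}})$. If $n\in T$, then $j_r\le n-1$, so $p_{j_r}$ is invertible modulo $q$; solving $\sum_{l=1}^rp_{j_l}k_{j_l}\equiv 0$ for $k_{j_r}\equiv\sum_{l<r}(-p_{j_r}^{-1}p_{j_l})k_{j_l}$ and letting $(k_{j_1},\dots,k_{j_{r-1}})$ run freely identifies $k_{j_r}/q$ with $\la(\sum_{l<r}p^T_{j_l}k_{j_l})/q\ra$, whence again $\Sigma_T=t_{i_{j_1},\dots,i_{j_r}}(q;p^T_{j_1},\dots,p^T_{j_{r-1}})$. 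The convention $p_n=-1$ unifies the two cases, since for $j_r=n$ one has $-p_{j_r}^{-1}p_{j_l}=p_{j_l}$. The second identity is obtained identically, starting instead from Definition \ref{definition-t} and the inverse form $B_{i_j}(\la x\ra)=\wB_{i_j}(x)-\tfrac12\,\mathbbm{1}_{i_j=1}\,\mathbbm{1}_{x\in\FZ}$ of the correction; the constrained partial sums now consist of periodic Bernoulli functions and reduce to the Dedekind sums $s_{i_{j_1},\dots,i_{j_r}}$, while the sign $(-1)^{|T|}$ produces the weight $(-\tfrac12)^{|T|}$. Finally, since every $j\in T$ has $i_j=1$, each surviving sum has total degree $N-|T|$, so by Proposition \ref{vanishing-t} (resp.\ Proposition \ref{odd-degree-sum}) it vanishes unless $N-|T|$ is even; this restricts the summation to $|T|\equiv N\pmod 2$.

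I expect the reduction in the case $n\in T$ to be the main obstacle: it requires inverting $p_{j_r}$ modulo $q$, re-indexing the summation variables, and checking that the single formula $p^T_{j_l}\equiv -p_{j_r}^{-1}p_{j_l}$ together with $p_n=-1$ correctly absorbs the easier case $n\notin T$. Care is also needed with the half-integer corrections and the fractional-part conventions, and with verifying that each $p^T_{j_l}$ is again coprime to $q$ so that the lower sums are well defined; both are routine once the reduction is set up.
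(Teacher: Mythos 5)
Your expansion is, in essence, the paper's own proof written out in full: the paper's entire argument is ``Apply $\wB_j(x) = B_j(\{x\}) + \frac12\delta_{1,j}\delta(\{x\})$ to the definition of $s_{i_1,\ldots,i_n}$. We omit the details,'' and the details you supply --- the expansion over subsets $T\subseteq J$, the elimination of $k_{j_r}$ by inverting $p_{j_r}$ modulo $q$ when $n\in T$, the unification of the two cases via the convention $p_n=-1$, and the symmetric argument for the second identity --- are exactly the omitted ones, correctly carried out.

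However, your final step fails for the first identity. The theorem's parity restriction $|T|\equiv N \pmod 2$ applies only to \emph{the second summation}, i.e.\ the one in the second displayed formula, whose terms are Dedekind sums $s$; this is what the paper justifies by Prop.~\ref{odd-degree-sum}, which genuinely kills every generalized Dedekind sum of odd total degree. The summation in the first identity must be left over \emph{all} nonempty $T\subseteq J$. You restrict both, citing Prop.~\ref{vanishing-t} for the first, but that proposition cannot be used in the strength stated (indeed, as stated in the paper it is inaccurate): its proof only shows that the odd part of the Todd series is supported on monomials having some exponent equal to $1$, so an odd-degree Todd coefficient with some index equal to $1$ may well be nonzero --- for instance $t_{1,2}(2;1) = B_1(0)B_2(0) + B_1(\tfrac12)B_2(\tfrac12) = -\tfrac1{12}\neq 0$. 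Exactly such terms appear in the first identity whenever $T$ is a nonempty \emph{proper} subset of $J$, since then some surviving index $i_{j_l}$ equals $1$. Concretely, take $n=3$, $(i_1,i_2,i_3)=(1,1,2)$, $q=2$, $p_1=p_2=1$: then $s_{1,1,2}(2;1,1)=0$ and $t_{1,1,2}(2;1,1)=\tfrac1{24}$, and the unrestricted first identity checks out, $\tfrac1{24}+\tfrac12\left(-\tfrac1{12}\right)+\tfrac12\left(-\tfrac1{12}\right)+\tfrac14\cdot\tfrac16=0$, whereas your restricted version (keeping only $T=\{1,2\}$, as $N=4$ is even) gives $\tfrac1{24}+\tfrac1{24}=\tfrac1{12}\neq 0$. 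So keep the first summation unrestricted, and discard the odd-$|T|$ terms only in the second identity, where Prop.~\ref{odd-degree-sum} (vanishing of all odd-weight generalized Dedekind sums, indices $1$ included) actually applies.
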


\begin{proof}
Apply $\wB_j(x) = B_j(\{x\}) +\frac12 \delta_{1,j} \delta(\{x\}) $ to
the definition of $s_{i_1,\ldots,i_n}$.
We omit the details. The statement on the second summation follows 
from the last proposition. 
\end{proof}

\begin{remark}\label{defn-exceptional}
It is convenient to define Todd coefficients
and generalized Dedekind sums when $n= 0, 1$ as follows,
so that the above equations in the theorem hold.
When $n=1$, we define $t_j(q) = B_j(0) = B_j$ and $s_j(q) = \wB_j(0) $ for $j  \geq  1$.
When $n=0$, we define $t = s = 1$.
\end{remark}

%%%%%%%%%%%%%%%%%%%%%%%%%%%%%%%%%%%%%%%%%%

\section{Integrality of  generalized Dedekind sums}

Let $V$ be a $n$-dimensional vector space over $\FR$ %$= \FR^n$ 
and let $S(V^*)$ and $R(V^*)$ be the symmetric algebra of $V^*$ 
and its field of fractions, respectively. These are identified with 
the ring of polynomial functions and the field of rational functions on $V$, respectively.  
If we choose an ordered basis $B$ for $V$, then they are identified with 
the sets of polynomials and rational functions in coordinates with respect to $B$.
When we need to specify the basis, we will denote by $(\;)_B$. For example, if
$v^* \in V^*$ and $B = \{w_1,\cdots,w_n\}$ is a basis for $V$, then
$(v^*)_B = \sum_{i=1}^{n} \langle v^*, w_i\rangle x_i \in \FR[x_1,\cdots, x_n]$.
 
Let $C = \co(v_1, \cdots, v_n)$ be a lattice cone in $V$ w.r.t. a lattice associated to $V$. 
So $v_i$ are primitive lattice vectors in $V$.
The functions $T^N(C) \in S(V^*)$ and $S^N(C) \in R(V^*)$
given in Def.\ref{todd-as-functions} can be written as
\begin{align*}
T^N(C) &= \td_C^N(v_1^*, v_2^*, \cdots, v_n^*) \notag \\
S^N(C) &= \frac{T^N(C)}{(\det M_C) v_1^* v_2^*\cdots v_n^*}
\end{align*}
where $\{v_1^*, \cdots, v_n^*\}$ is the basis for $V^*$ dual to $\{v_1, \cdots,v_n\}$.
For $v^* \in V^*$, we will denote the hyperplane $(v^*)^{\perp} = 
\{ v \; | \; \langle v^*, v \rangle = 0 \}$ in $V$ by $v^* = 0$ for simplicity.

Let $C = \co(w_1,\cdots,w_n) \subset V$ be a simplicial lattice cone.
The cone $C$ admits a subdivision $C = \cup_{D \in \fD}\; D$ 
into nonsingular lattice cones, all of which are of the same orientation as $C$.
 (See \S \ref{lattice cones}.)
Then we have by Cor.\ref{todd-poly-cocycle}
\begin{equation}\label{cocycle-eq}
S^N(C) = \sum_{D \in \fD} S^N(D) .
\end{equation}
In particular, the only possible singularities of this rational function are
poles along the hyperplanes $w_j^* = 0 \; (1\leq j \leq n)$.

\begin{remark}
The individual rational functions in the right hand side of (\ref{cocycle-eq})
have poles along hyperplanes generated by facets of $D \in \fD$. 
(A facet is a face of codimension 1.) The second statement says that
most of these poles cancel each other except the outermost ones. This fact can be proven
without recourse to the cocycle property.
Really, it is easy to prove the following.

Let $C$ and $D$ be two nonsingular lattice cones in $V$ having a common facet.
Suppose $C$ and $D$ are of the same (resp. opposite) orientation 
if they are in the opposite (resp. same) sides of the hyperplane $H$ 
generated by the common facet.
Then the rational function $S^N(C) + S^N(D)$ does not have a pole along $H$.
\end{remark}

From now on, we suppose that any proper subset of $\{w_1, \cdots, w_n \}$ 
can be extended to a basis of $\FZ^n$. Under this assumption,
there exists a subdivision $\fD$ (actually, one obtained by the above mentioned 
procedure in \cite{Fulton})
such that each facet of $C$ is contained in a unique $D \in \fD$, 
i.e. facets of $C$ are not subdivided. Really, the fundamental parallelepiped 
of each facet of $C$ does not contain a non-zero lattice vector.
For $1 \leq j \leq n$, let $D_j \in \fD$ be the cone in the subdivision $\fD$ 
that contains the $j$-th facet $C(j) = \co(w_1, \cdots, \widehat{w_j},\cdots,w_n)$ of $C$.
Thus we can divide the subdivision into two disjoint parts:
$$
\fD = \fO \cup \fI
$$
where $\fO = \{D_1, \cdots, D_n\} \subset \fD$ and $\fI = \fD \backslash \fO$.
Namely, $\fO$(resp. $\fI$) is the set of outer(resp. inner) cones in $\fD$. 
According to this decomposition, we decompose the sum (\ref{cocycle-eq}) into two parts:
\begin{equation}\label{decomposition-i-o}
S^N(C) = S_{inner}^N(C) + S_{outer}^N(C) ,
\end{equation}
where $S_{inner}^N(C) = \sum_{D \in \fI} S^N(D)$ and 
$S_{outer}^N(C) = \sum_{D \in \fO} S^N(D)$.

\begin{prop}\label{inner-sum}
The only possible singularities of the rational function $S_{inner}^N (C)$ are 
simple poles along the hyperplanes generated by facets of 
$D \in \fO$ except $w_j^* = 0 \;\; (1\leq j \leq n)$.
\end{prop}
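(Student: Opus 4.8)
The plan is to read off the singularities of $S_{inner}^N(C)$ from the decomposition (\ref{decomposition-i-o}) by playing two already-established facts against each other. The first is that the full function $S^N(C) = \sum_{D\in\fD}S^N(D)$ has poles only along the outermost hyperplanes $w_j^* = 0$ (the statement just after (\ref{cocycle-eq})). The second is that for a nonsingular cone $D$ one has $S^N(D) = \td_D^N/\big((\det M_D)\,x_1\cdots x_n\big)$, a polynomial over a product of distinct linear forms, so its poles are \emph{simple} and occur exactly along the hyperplanes generated by the facets of $D$. Writing
\[
S_{inner}^N(C) \;=\; S^N(C) - S_{outer}^N(C) \;=\; S^N(C) - \sum_{j=1}^{n} S^N(D_j),
\]
I see at once that every pole of $S_{inner}^N(C)$ lies either along some $w_j^* = 0$ (contributed by $S^N(C)$) or along a facet hyperplane of one of the outer cones $D_1,\dots,D_n$ (contributed by $S_{outer}^N(C)$), and that each such pole is simple, since a finite sum of functions with at most a simple pole along a fixed hyperplane again has at most a simple pole there. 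Because $\{w_j^*=0\}$ is itself the hyperplane generated by the facet $C(j)$ of $D_j$, this already confines the possible poles to hyperplanes generated by facets of $D\in\fO$.

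It then remains only to delete the $n$ outermost hyperplanes $w_j^*=0$ from this list, and for that I would argue directly from $S_{inner}^N(C) = \sum_{D\in\fI}S^N(D)$ rather than from the difference above. Each summand has poles only along hyperplanes generated by facets of the inner cone $D$, so it suffices to check that no inner cone possesses a facet lying on $w_j^*=0$. Such a facet would be an $(n-1)$-dimensional cone contained in $|C|\cap\{w_j^*=0\} = |C(j)|$; but under the standing hypothesis that every proper subset of $\{w_1,\dots,w_n\}$ extends to a basis of $\FZ^n$, the facet $C(j)$ is not subdivided, so it is a facet of the single cone $D_j\in\fO$ and no other cone of $\fD$ has a facet inside $|C(j)|$. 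Since each $D\in\fI$ is by definition distinct from $D_j$, it has no facet on $w_j^*=0$, whence $S_{inner}^N(C)$ has no pole there. Combining the two steps yields precisely the asserted list of possible simple poles, namely the hyperplanes generated by facets of $D\in\fO$ other than the $w_j^*=0$.

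The step I expect to require the most care is this last one: making rigorous that the boundary facets $C(j)$ are genuinely undivided and hence that interior cones never reach the outermost hyperplanes. This is the only place where the hypothesis on $\{w_1,\dots,w_n\}$ and the specific splitting $\fD=\fO\cup\fI$ are really used, so I would state the non-subdivision of facets cleanly before invoking it. (As an alternative to routing the pole bound through $S^N(C)-S_{outer}^N(C)$, one could instead cancel the poles along interior hyperplanes pair-by-pair using the Remark preceding the proposition: two adjacent cones of $\fD$ share a facet, lie on opposite sides of the hyperplane it generates, and carry the same orientation as $C$, so their contributions cancel; but this forces an enumeration of interior facets that the subtraction argument avoids entirely.)
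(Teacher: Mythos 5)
Your proof is correct and follows essentially the same route as the paper: the paper likewise confines the singularities by writing $S_{inner}^N(C) = S^N(C) - S_{outer}^N(C)$, and then excludes poles along $w_j^* = 0$ by showing that no cone $D \in \fI$ can have a facet spanning that hyperplane. The only minor differences are that you derive the final contradiction directly from the non-subdivision of the facet $C(j)$ (only $D_j$ can meet $|C(j)|$ in dimension $n-1$), whereas the paper argues that such a facet $F$, not being a face of $C$, would be shared by a second cone $D'$ lying on the opposite side of $w_j^*=0$ — which is impossible inside $|C|$ — and that you make the simplicity of the poles explicit, a point the paper leaves implicit in the form of the denominators of $S^N(C)$ and the $S^N(D_j)$.
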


\begin{proof}
We have $S_{inner}^N (C) = S^N (C) - S_{outer}^N(C)$ and the only
possible singularities of $S_{outer}^N(C)$ are poles along hyperplanes generated
by facets of $D \in \fO$. It remains to show that if $H$ is the hyperplane
$w_j^* = 0$ generated by the $j$-th facet 
$C(j) =  \co(w_1, \cdots, \widehat{w_j},\cdots,w_n)$ of $C$ for $1\leq j \leq n$, 
then $S_{inner}^N (C)$ does not have a pole along $H$.
Suppose otherwise. Then $H$ must
be generated by a facet $F$ of some $D \in \fI$. Since $D$ is in $\fI$, $F$ is
not a face of $C$ and is shared by a unique $D'\neq D \in \fD$. This implies
$D'$ and $D$ are in the opposite sides of $H$, which is a contradiction.
\end{proof}

\begin{defn}\label{definition-d}
Let $d_{N,n}$ be the least common multiple of the denominators of coefficients 
of $\td^N(x_1,\cdots,x_n)$, the $N$-th Todd polynomial (\ref{todd-N}).
\end{defn}

\begin{remark}
It is well known that the primes dividing the denominator
of the $k$-th Bernoulli number ($k$ even) are precisely 
those $p$ such that $p-1$ divides $k$.
\end{remark}

Let $(q;p_1,\cdots,p_{n-1}) \in I_n$. 
Let $E = \{ e_1, \cdots, e_n \}$ be the standard basis of $V = \FR^n$ and let 
$B = \{ w_1, \cdots, w_n \}  \subset \FZ^n$
where $w_1 = e_1, \cdots, w_{n-1} = e_{n-1}, w_n = (p_1,\cdots, p_{n-1}, q)$.
Any subset of $B$ can be extended to a basis of $\FZ^n$.
Hence the cone $C = \co(w_1, \cdots, w_n)$ (which we have denoted
by $C(q;p_1,\cdots,p_{n-1})$ in Ex.\ref{exam-dual-cone})
admits a subdivision $\fD = \fI \cup \fO$ as described  in the paragraph before
the last proposition. The coordinates with respect to the basis $B$ will be
denoted by $x_1, \cdots, x_n$. Hence $(w_j^*)_B = x_j$ for $1\leq j \leq n$.
To keep the notations simple, we will omit $(\;)_B$ and write $w_j^* = x_j$ if
it is clear from the context.  

We have the following result on the integrality of its coefficients
of $\td_C^N(x_1, \cdots, x_n) = T^N(C)_B$.

\begin{thm}\label{integrality}
For a nonnegative interger $N$ and $(q;p_1,\cdots,p_{n-1}) \in I_n$, let
$d = d_{N,n}$ and let $C = C(q;p_1,\cdots,p_{n-1})$. Then we have
$$\td_C^N(x_1,\cdots,x_n) = q x_1 \cdots x_n S^N(C)_B
\; \in \; \frac{1}{d}\FZ[x_1, x_2, \cdots, x_n] .$$
\end{thm}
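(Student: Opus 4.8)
The plan is to reduce the integrality to the single universal denominator $d = d_{N,n}$ of the nonsingular Todd polynomial by means of the cocycle property, and then to read off the coefficients of $\td_C^N$ by an iterated residue along the coordinate flag. First I would record the two facts I need besides the cocycle relation: the Todd series is holomorphic at $0$, so $\td_C^N$ is a genuine polynomial, and consequently $S^N(C)_B = \td_C^N/(q\,x_1\cdots x_n)$ is a Laurent polynomial whose monomial coefficients are well defined. Thus it suffices to bound the denominators of those coefficients, equivalently (by \eqref{todd-t}) to show $d\,q^{N-n+1}t_{\br}/\br!\in\FZ$ for every $\br$ with $|\br|=N$. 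Choosing a subdivision $\fD$ of $C$ into nonsingular lattice cones of the same orientation, Corollary~\ref{todd-poly-cocycle} gives
\begin{equation*}
S^N(C) = \sum_{D\in\fD} S^N(D).
\end{equation*}

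Next I would settle the statement for a single nonsingular cone $D = \co(u_1,\ldots,u_n)$. By $\GL_n(\FZ)$-invariance of the Todd series, $T^N(D) = \td^N(u_1^*,\ldots,u_n^*)$ where $\{u_i^*\}$ is the dual basis; since $D$ is nonsingular the $u_i$ form a $\FZ$-basis of $\FZ^n$, so each $u_i^*$ is an \emph{integral} linear form in standard coordinates. As the coefficients $(-1)^N B_{\br}/\br!$ of $\td^N$ in \eqref{todd-N} lie in $\frac1d\FZ$ by Definition~\ref{definition-d}, substituting the integral forms $u_i^*$ yields $T^N(D)\in\frac1d\FZ[y_1,\ldots,y_n]$, and $S^N(D) = \pm\,T^N(D)/(u_1^*\cdots u_n^*)$ has numerator in $\frac1d\FZ[y]$ and denominator a product of integral linear forms.

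Then I would extract the coefficient of a monomial $\bx^{\br}$ of $\td_C^N$. Since $\td_C^N = q\,x_1\cdots x_n\,S^N(C)_B$, this coefficient is $q$ times the iterated residue of $S^N(C)_B$ along the ordered coordinate flag $x_1=0,\ldots,x_n=0$ (a Parshin point). Because $S^N(C)_B$ is meromorphic with poles only along the coordinate hyperplanes, this iterated residue is well defined, independent of the chosen order, and agrees with the honest Laurent coefficient; these are exactly the properties to be established in Appendix~B, and by linearity the residue distributes over $\sum_{D}S^N(D)_B$. For each $D$ the residue is computed by expanding the integral linear forms $u_i^*$ in the denominator as geometric series in $x_1,\ldots,x_n$ and retaining one coefficient; this uses only the integrality of the $u_i^*$ and of the change of basis between $B$ and the standard basis, through which the powers of $q$ (from $w_n=(p_1,\ldots,p_{n-1},q)$ and the factor $q$ in $q\,x_1\cdots x_n$) and the factorials $\br!$ enter. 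Hence no prime denominator beyond those already in $T^N(D)$ is created, and summing over $D$ gives $d\,q^{N-n+1}t_{\br}/\br!\in\FZ$, i.e. $d\,\td_C^N\in\FZ[x_1,\ldots,x_n]$.

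The main obstacle is precisely the coefficient extraction of the third paragraph: the individual summands $S^N(D)_B$ carry spurious poles along the interior facet hyperplanes of the subdivision, so one cannot read off monomial coefficients term by term. One must fix the expansion order along the coordinate flag and verify that the resulting iterated residue is additive over $\fD$ and does not amplify denominators (the content deferred to Appendix~B), and one must track the base change $B\leftrightarrow$ standard basis to confirm that the denominator introduced beyond $d$ is exactly the prescribed power of $q$. Here the pole analysis of Proposition~\ref{inner-sum} is what makes the argument go through, since it guarantees that the interior poles never lie along the hyperplanes $x_j=0$ used to define the flag, so the iterated residue is genuinely insensitive to them.
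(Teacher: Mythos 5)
Your reduction of the theorem to the statement $d\,q^{N-n+1}t_{\br}/\br!\in\FZ$ and your treatment of a single nonsingular cone are fine, but the third paragraph has a genuine gap, and it is exactly at the point where the real difficulty lies. When you expand a factor $1/(v_i^{D*})_B$ of $S^N(D)_B$ as an iterated geometric series along the coordinate flag, the coefficients are \emph{not} integral: writing $(v_i^{D*})_B = a_kx_k + (\text{terms later in the flag order})$, the expansion is $\frac{1}{a_kx_k}\sum_{m\ge 0}(-1)^m\bigl((\text{later terms})/(a_kx_k)\bigr)^m$, whose coefficients carry powers of $1/a_k$. For cones deep inside a nonsingular subdivision these leading coefficients are essentially arbitrary integers. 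Concretely, for $C=\co(e_1,(1,q))\subset\FR^2$ the standard subdivision consists of the cones $D_k=\co((1,k),(1,k+1))$, $0\le k<q$, whose dual forms in $B$-coordinates are $(k+1)x_1+(k+1-q)x_2$ and $-k\,x_1+(q-k)x_2$; expanding these introduces denominators divisible by primes dividing $k$, $k+1$, which in general divide neither $d$ nor any power of $q$. So your claim that ``no prime denominator beyond those already in $T^N(D)$ is created'' is false term by term: the iterated constant term of an individual $S^N(D)_B$ lies only in $\frac1d\FZ[1/a_1,\ldots]$, and the spurious denominators cancel only in the sum over all $D\in\fD$. Proving that cancellation \emph{is} the theorem, and Proposition~\ref{inner-sum} cannot supply it: that proposition locates the poles of certain rational functions over $\FQ$, which is a geometric statement and says nothing about which primes divide the Laurent coefficients of a single summand.

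The paper's proof avoids residues entirely at this stage and splits the problem into an algebraic and an arithmetic half. First, working in standard coordinates in the UFD $\FZ[y_1,\ldots,y_n]$, each $d\,S^N(D)_E$ is a quotient of integral polynomials whose denominator is a product of \emph{primitive} (hence irreducible) integral linear forms; since the sum $S^N(C)$ has poles only along $w_j^*=0$, all other irreducible factors must cancel, leaving only $(v_1^{(1)*}\cdots v_n^{(n)*})_E$ in the denominator, with $v_j^{(j)*}=q\,w_j^*$. This yields $d\,q^nx_1\cdots x_nS^N(C)_B\in\FZ[x_1,\ldots,x_n]$ --- integrality, but with $q^n$ in place of the desired $q$. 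The extra $q^{n-1}$ is then removed arithmetically: using the special subdivision in which no facet of $C$ is subdivided (so there is a unique outer cone $D_j$ at each facet, and the forms $(v_k^{(j)*})_B$, $k\neq j$, are primitive), one shows that $d\,qx_1\cdots x_nS^N(C)_B$ lies in the localization $\fS^{-1}\FZ[x_1,\ldots,x_n]$, where $\fS$ is the set of polynomials that are non-zerodivisors mod $q$, and concludes from $\fS^{-1}\FZ[x_1,\ldots,x_n]\cap q^{-(n-1)}\FZ[x_1,\ldots,x_n]=\FZ[x_1,\ldots,x_n]$. The iterated-constant-term machinery of Appendix~\ref{app-ict} that you invoke enters the paper only later, in the congruence of Theorem~\ref{thm-congruence}, where all expansions are performed with coefficients in $\FZ/q\FZ$ and the relevant leading coefficients (such as $p_ip_j^{-1}$) are units modulo $q$ --- precisely the property that fails integrally and sinks your version of the computation.
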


\begin{proof}
We keep the notations of this section.
Recall that for $1\leq j \leq n$, $D_j \in \fO$  is the unique nonsingular lattice cone 
in the subdivision $\fD$ of $C$ which has 
$C(j)$ as a facet. 
We order the generators of  $D_j = \co(v_1^{(j)},\cdots, v_n^{(j)})$ such that 
$v_i^{(j)} = w_i$ for $i \neq j$. Since $D_j$ is a nonsingular lattice cone,
$v_1^{(j)}\wedge \cdots \wedge v_n^{(j)} = \pm e_1 \wedge \cdots \wedge e_n$. The
sign must be $+1$ since 
$w_1 \wedge \cdots \wedge w_n = q e_1 \wedge \cdots \wedge e_n$
with $q \geq 2$ and $w_j$ and $v_j^{(j)}$ are in the same side of 
the hyperplane generated by the facet $C(j)$.  Let $\{ v_1^{(j)*},\cdots, v_n^{(j)*}\}$
be the basis of $V^*$ dual to $\{v_1^{(j)},\cdots, v_n^{(j)}\}$. We claim that
$v_j^{(j)*} = q w_j^*$. By definition, $\langle v_j^{(j)*} , w_i \rangle = 0$ if $i \neq j$. 
Since $w_j = \sum_i v_i^{(j)} \langle v_i^{(j)*} , w_j \rangle$, we have
$w_1 \wedge \cdots \wedge w_n = \langle v_j^{(j)*} , w_j \rangle 
v_1^{(j)}\wedge \cdots \wedge v_n^{(j)} $. 
So $\langle v_j^{(j)*} , w_j \rangle = q$, in other words, 
$(v_j^{(j)*})_B = q x_j$ for $1 \leq j \leq n$.

If $D = \co(v_1,\cdots,v_n)$ is a nonsingular lattice cone in $V = \FR^n$, then 
both of the numerator and the denominator of 
$d S^N(D)_E = d T^N(D)_E/(v_1^* \cdots v_n^*)_E$
are integral polynomials. 
In coordinates w.r.t. $\{v_1, \cdots, v_n\}$, 
it holds by definition, and then we change the variables to coordinates with respect to $E$. 
Since $\{v_1,\cdots,v_n\}$ is a basis for $\FZ^n$, $(v_j^*)_E \;\; (1\leq  j \leq n)$ are
primitive (linear) polynomials hence are irreducible objects in the integral polynomial ring.
The terms in the right hand side of 
$$ d S^N(C)_E = \sum_{D \in \fD} d S^N(D)_E $$
are such rational functions.  
Since the integral polynomial ring is a UFD,
if we factor out the right hand side of the above equation, only 
$(v_1^{(1)*} \cdots v_n^{(n)*})_E$ is left in the denominator by (\ref{cocycle-eq}).
In other words, $d (v_1^{(1)*} \cdots v_n^{(n)*} S^N(C))_E$ is an integral polynomial.
In coordinates with respect to $B$, we have 
\begin{equation*}
d (v_1^{(1)*} \cdots v_n^{(n)*} S^N(C))_B 
= d q^nx_1\cdots x_n S^N(C)_B \in \FZ[x_1, \cdots, x_n] .
\end{equation*}
Let  $r_q: \FZ[x_1,\cdots, x_n] \rightarrow \FZ/q\FZ[x_1,\cdots, x_n]$ 
be the the reduction modulo $q$ map and let $\fS = \fS_q \subset \FZ[x_1,\cdots, x_n]$ 
be the inverse image of the set of non-zerodivisors in $\FZ/q\FZ[x_1,\cdots, x_n]$.
(A non-zero integral polynomial belongs to $\fS$ if and only if the greatest common divisor 
of its coefficients is relative prime to $q$.) 
The reduction modulo $q$ map can be extended to a
homomorphism $r_q$ from $\fS^{-1}\FZ[x_1,\cdots, x_n]$ to the total quotient ring 
of $\FZ/q\FZ[x_1,\cdots, x_n]$. 

We will prove $d q x_1\cdots x_n S^N(C)_B \in \fS^{-1}\FZ[x_1, \cdots, x_n].$
This will complete the proof of the theorem
since $$\fS^{-1}\FZ[x_1, \cdots, x_n] \cap q^{-(n-1)}\FZ[x_1, \cdots, x_n] 
= \FZ[x_1, \cdots, x_n].$$ 
Exploiting the decomposition $S^N(C) = S_{inner}^N(C) + S_{outer}^N(C)$, we will
show suitable multiplication of $S_{inner}^N(C)_B$ and $S_{outer}^N(C)_B$ belong to 
$\fS^{-1}\FZ[x_1, \cdots, x_n]$, respectively.

First, we claim that 
\begin{equation} \label{integrality-inner-sum}
d S_{inner}^N (C)_B \in \fS^{-1}\FZ[x_1, \cdots, x_n].
\end{equation}
By Prop.\ref{inner-sum} and the same arguments as above, 
$d S_{inner}^N (C)_E$ can be written as a quotient of two integral polynomials with denominator
$\prod_{k \neq j} (v_k^{(j)*})_E$ where the product is 
over all pairs $1 \leq j, k \leq n$ with $k \neq j$.
Changing the basis from $E$ to $B$, 
it is enough to show that $(v_k^{(j)*})_B$ is a primitive
polynomial for $k \neq j$. The coefficients of $(v_k^{(j)*})_B = 
\sum_l \langle v_k^{(j)*} , w_l \rangle x_l$ are the $k$-th row of the integral matrix
$\begin{pmatrix}\langle v_k^{(j)*} , w_l \rangle\end{pmatrix}_{k,l}$ of determinant $q$. 
Since $v_j^{(j)*} = q w_j^*$, the $j$-th row of this matrix is a multiple of $q$. 
Hence other rows must be primitive.
This last statement that $(v_k^{(j)*})_B$ is primitive for $k \neq j$ also shows that
\begin{equation} \label{integrality-outer-sum}
d(v_j^{(j)*}S^N(D_j))_B = d qx_j S^N(D_j)_B \in \fS^{-1}\FZ[x_1, \cdots, x_n]
\end{equation}
since $(v_k^{(j)*})_B$'s are the polynomials appearing 
in the denominator of $d S^N(D_j)_B$.
Therefore 
$$
d qx_1 \cdots x_n S_{outer}^N(C) =d qx_1 \cdots x_n \sum_{j=1}^{n} S^N(D_j)
\in \fS^{-1}\FZ[x_1, \cdots, x_n].
$$ 
\end{proof} 

\begin{cor}\label{integrality-s}
For $\br = (r_1, r_2,\cdots, r_n) \in \FZ^n_{>0}$,
let $N=|\br|$ and $d' = \frac{d_{N,n}}{\br !}$.
Then $d'q^{N-n+1} s_{\br}(q;p_1,\cdots,p_{n-1})$ is an integer
for any $(q;p_1,\cdots,p_{n-1}) \in I_n$. 
\end{cor}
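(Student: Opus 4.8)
The plan is to reduce the assertion for the generalized Dedekind sum $s_\br$ to the corresponding assertion for the Todd coefficient $t_\br$, which is essentially the content of Theorem~\ref{integrality}, and then to absorb the discrepancy between $s_\br$ and $t_\br$ (coming from the periodic Bernoulli functions at the origin) into the constant $d_{N,n}$. First I would record the integrality of the Todd coefficients in every dimension and weight. Comparing \eqref{todd-t} with Theorem~\ref{integrality}, the coefficient of $\bx^\br$ in $\td_C^N$ is $(-1)^N q^{N-n+1} t_\br(q;p_1,\ldots,p_{n-1})/\br!$, and Theorem~\ref{integrality} places $\td_C^N$ in $\frac{1}{d_{N,n}}\FZ[x_1,\ldots,x_n]$; hence
\[
\frac{d_{N,n}}{\br!}\, q^{N-n+1}\, t_\br(q;p_1,\ldots,p_{n-1}) \in \FZ .
\]
If every $r_i>1$ then $s_\br=t_\br$ by the identity preceding Proposition~\ref{odd-degree-sum}, and we are finished. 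We may also assume $N$ is even, since $s_\br=0$ for odd $N$ by Proposition~\ref{odd-degree-sum}.

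For the general case I would invoke Theorem~\ref{relation-t-s} to write $s_\br$ as $t_\br$ plus a finite sum of terms $\left(\tfrac12\right)^{|T|} t_{\br_T}$, one for each nonempty $T\subset J=\{\,j : r_j=1\,\}$, where $\br_T$ is the index obtained by deleting the coordinates indexed by $T$. Each $t_{\br_T}$ is a Todd coefficient in dimension $n'=n-|T|$ and weight $N'=N-|T|$; the point is that $N'-n'+1=N-n+1$ and $\br_T!=\br!$, the deleted coordinates being equal to $1$. Applying the displayed integrality in dimension $n'$ then gives $\frac{d_{N',n'}}{\br!}q^{N-n+1}t_{\br_T}\in\FZ$. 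To collect all terms over the single denominator $d_{N,n}/\br!$, it remains to establish the divisibility $2^{|T|}d_{N',n'}\mid d_{N,n}$; granting this, each summand $\frac{d_{N,n}}{\br!}q^{N-n+1}\left(\tfrac12\right)^{|T|}t_{\br_T}$ is an integer, and adding them up proves $\frac{d_{N,n}}{\br!}q^{N-n+1}s_\br\in\FZ$, which is the claim.

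The substance of the argument is therefore the elementary lemma $2^s d_{N-s,\,n-s}\mid d_{N,n}$ for $0\le s\le n$, which I would prove prime by prime using the description of $d_{N,n}$ in Definition~\ref{definition-d} together with \eqref{todd-N}. Given a tuple $(m_1,\ldots,m_{n-s})$ summing to $N-s$, I extend it to $(m_1,\ldots,m_{n-s},1,\ldots,1)$ in dimension $n$ (appending $s$ ones) summing to $N$; the associated product of the $B_{m_i}/m_i!$ acquires a factor $(B_1)^s=(-1/2)^s$. The crucial input is von Staudt--Clausen: the numerator of each $B_{2k}$ is odd, i.e. $v_2(B_{2k})=-1$, so the numerator of any non-vanishing product $\prod B_{m_i}/m_i!$ is odd. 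Consequently multiplication by $(-1/2)^s$ raises the $2$-adic valuation of the denominator by exactly $s$ with no cancellation, while the appended ones contribute no odd prime to the denominator and so leave every odd-prime valuation unchanged. Choosing the tuple that realizes $v_p(d_{N-s,n-s})$ shows $v_p\!\left(2^s d_{N-s,n-s}\right)\le v_p(d_{N,n})$ for all $p$, hence the divisibility; the boundary case $n'=0$ (all $r_i=1$, forcing $N=n$) reduces to $2^n\mid d_{n,n}$, witnessed by the all-ones tuple.

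I expect this $2$-adic bookkeeping to be the only delicate step: it is what guarantees that the half-integer shifts introduced by $\wB_1(0)=0\ne B_1=-\tfrac12$ are already paid for by the factors of $2$ built into $d_{N,n}$ through the entries $B_1/1!=-1/2$. Everything else is a mechanical assembly of Theorem~\ref{integrality} and Theorem~\ref{relation-t-s}, with the bookkeeping of weights and factorials arranged so that $N'-n'+1=N-n+1$ and $\br_T!=\br!$ make the powers of $q$ and the factorial denominators match across dimensions.
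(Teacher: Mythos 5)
Your proof is correct, and its skeleton is the same as the paper's: integrality of the Todd coefficients (Theorem \ref{integrality}), the $s$--$t$ relation (Theorem \ref{relation-t-s}), and the divisibility $2^{k} d_{N-k,n-k} \mid d_{N,n}$. You differ in two ways. The paper inducts on $n$, using the \emph{second} equation of Theorem \ref{relation-t-s} to express $t_{\br}$ through lower-dimensional Dedekind sums $s_{\br_T}$ and applying the induction hypothesis to those; you instead use the \emph{first} equation, writing $s_{\br}$ directly as $t_{\br}$ plus terms $\left(\tfrac12\right)^{|T|} t_{\br_T}$, each already covered by Theorem \ref{integrality} in dimension $n-|T|$ --- a mild streamlining that removes the induction, with the small caveat that for $n-|T|\le 1$ the ``Todd coefficient'' is only defined by the convention of Remark \ref{defn-exceptional}, where the needed integrality is immediate from Definition \ref{definition-d}. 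More substantially, the paper invokes $2^{k} d_{N-k,n-k} \mid d_{N,n}$ with no justification at all, while you actually prove it: appending $k$ ones to an $(n-k)$-tuple multiplies the corresponding coefficient by $(B_1)^k=(-1/2)^k$, and since von Staudt--Clausen gives $v_2(B_{2j})=-1$, every non-vanishing product $B_{\bm}/\bm!$ has non-positive $2$-adic valuation (odd numerator), so the denominator's $2$-adic valuation grows by exactly $k$ while no odd-prime valuation changes; comparing valuations prime by prime, with tuples chosen to realize each $v_p(d_{N-k,n-k})$, yields the divisibility. This $2$-adic bookkeeping is carried out correctly and fills the one genuine gap left open in the paper's own proof.
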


\begin{proof}
We use induction on $n$. 
By the last theorem, $d'q^{N-n+1} t_{\br} $ is an integer.
Since $2^k d_{N-k,n-k}$ divides $d_{N,n}$ for any integer $0 \leq k \leq n$,
applying induction hypothesis to 
the second equation (multiplied by $d' q^{N-n+1}$) in Thm.\ref{relation-t-s}
completes the proof. 
\end{proof}

%%%%%%%%%%%%%%%%%%%%%%%%%%%%%%%%%%%%%%%%%%%%%%

\section{Reduction mod $q$ of generalized Dedekind sums}

Let $C = \co(w_1,\cdots,w_n)$ be the lattice cone 
in $V = \FR^n$ generated by
$w_1 = e_1, \cdots, w_{n-1} = e_{n-1}, w_n = (p_1,\cdots, p_{n-1}, q) \in \FZ^n$
as in the paragraphs before Thm.\ref{integrality} and let $B = \{w_1,\cdots,w_n\}$.

Our next task is to consider the mod-$q$ reduction  of the integral polynomial
$d \td_C^N(x_1,\cdots,x_n) = d qx_1 \cdots x_n S^N(C)_B$, where 
$d = d_{N,n} \in \FZ$ is the constant given in Def.\ref{definition-d}.
We keep other notations of last section.
Recall that the reduction mod $q$ map can be extended
to $\fS^{-1}\FZ[x_1, \cdots, x_n]$. 
Since $d S_i^N(C)_B$ is already in this subring
(\ref{integrality-inner-sum}), $d qx_1 \cdots x_n S_i^N(C)_B$ $\pmod{q}$ vanishes 
and we have
\begin{equation}\label{red-cone}
\begin{split}
d qx_1 \cdots x_n S^N(C)_B & \equiv d q x_1 \cdots x_n S_{outer}^N (C)_B \\
&\equiv  \sum_{j=1}^n d qx_1 \cdots x_n S^N(D_j)_B \pmod q ,
\end{split}
\end{equation}
where $D_1,\cdots, D_n$ are given in the paragraph before Prop.\ref{inner-sum}.
Fix $1 \leq j \leq n$ and recall that the generators of $D_j = \co(v_1^{(j)},\cdots, v_n^{(j)})$
are ordered such that $v_i^{(j)} = w_i$ for $i \neq j$. Let $v_j^{(j)} = (b_1, \cdots, b_n)$.
It was shown $v_j^{(j)*} = q w_j^*$ in the proof of Thm.\ref{integrality}
and some calculation yields for $i \neq j$,
\begin{equation*}
 v_i^{(j)*} = 
 \left\{\begin{array}{ll}
 w_i^* + (b_n p_i - b_i q)w_j^*, &  \text{if $i \neq n$}, \\
 w_i^* - b_n w_j^*, & \text{if $i = n$}.
\end{array}\right.
\end{equation*}
We have shown that $v_1^{(j)}\wedge \cdots \wedge v_n^{(j)} 
= e_1 \wedge \cdots \wedge e_n$
and this implies $b_j q - b_n p_j = 1$ if $j \neq n$ and $b_n = 1$ if $j = n$. 
Hence we have $(v_j^{(j)*})_B = q x_j$ and for $i \neq j$,
\begin{equation*}
(v_i^{(j)*})_B \equiv \left\{ \begin{array}{lll}
x_i - p_i p_j^{-1} x_j &\mod q & \text{if  $j \neq n$ and $i \neq n$} \\
x_i + p_j^{-1} x_j &\mod q & \text{if $j \neq n$ and $i = n$} \\
x_i + p_i x_j &\mod q & \text{if  $j = n$}
\end{array}\right.
\end{equation*}
It is convenient to put $p_n = -1$. Then the above equation can be written as
\begin{equation}\label{red-face}
(v_i^{(j)*})_B \equiv x_i - p_i p_j^{-1} x_j \mod q \quad\text{for any $i \neq  j$}.
\end{equation}
Note the reduction mod $q$ of $(v_i^{(j)*})_B$ for $i \neq j$ does not depend on 
the lattice vector $v_j^{(j)}$. 
Let us write $t_\br(C)$ for $t_{r_1,\cdots,r_n}(q;p_1,p_2,\cdots,p_{n-1})$
in Def.\ref{definition-t}. Hence we have
\begin{equation*}
\td_C(x_1,\ldots,x_n) =\sum_{N=0}^{\infty} qx_1 \cdots x_n S^N(C)_B
=  \sum^\infty_{N=0} \sum_{|\br| =N} (-1)^{N} q^{N-n+1}
\frac{t_{\br}(C) }{\br !} \bx^{\br} .
\end{equation*}

\begin{thm}\label{thm-congruence}
For $(q;p_1,\cdots,p_{n-1}) \in I_n$ and 
$\br = (r_1, r_2,\cdots, r_n) \in \FZ_{\geq 1}^n$,
let $N= |\br|$, $d = d_{N,n}$ and $C = C(q;p_1,\cdots,p_{n-1})$.
Then $d  q^{N-n+1} t_{\br}(C)/\br !$ is an integer and 
\begin{equation}\label{eq-thm}
\frac{d q^{N-n+1}}{\br !} t_{\br}(C) \; \equiv \;  \sum_{\bm} (-d)
\frac{B_{\bm}}{\bm!}\prod_{i=1}^{n}\binom{m_i -1}{r_i-1} p_i^{m_i - r_i}
\mod q .
\end{equation}
The summation is over the set of $n$-tuples 
$\bm = (m_1,\cdots, m_n) \in \FZ_{\geq 0}^n$ with $|\bm| = N$ 
such that at least one of its coordinates is zero. (Recall we have put $p_n = -1$.)
\end{thm}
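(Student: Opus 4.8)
The plan is to compute the coefficient of $\bx^{\br}$ in the integral polynomial $d\,\td_C^N = d\,qx_1\cdots x_n S^N(C)_B$ in two ways. Comparing with \eqref{todd-t} gives
\begin{equation*}
[\bx^{\br}]\bigl(d\,\td_C^N\bigr) = (-1)^N\,\frac{d\,q^{N-n+1}}{\br!}\,t_{\br}(C),
\end{equation*}
so the left-hand side of \eqref{eq-thm} is $(-1)^N[\bx^{\br}](d\,\td_C^N)$, and its integrality is already furnished by Thm.\ref{integrality}. On the other hand I would evaluate the same coefficient modulo $q$ through the reduction \eqref{red-cone}, which replaces $C$ by the outer nonsingular cones $D_1,\dots,D_n$, the inner cones having dropped out by \eqref{integrality-inner-sum}. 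Since the two evaluations must agree and $(-1)^N$ is a unit mod $q$, matching them will yield \eqref{eq-thm}.

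For each $j$ the cone $D_j$ is nonsingular of determinant $+1$, so $S^N(D_j)_B = \td^N\bigl((v_1^{(j)*})_B,\dots,(v_n^{(j)*})_B\bigr)\big/\prod_i (v_i^{(j)*})_B$ with $\td^N$ the Todd polynomial \eqref{todd-N}. Using $(v_j^{(j)*})_B = qx_j$ to cancel the factor $q$, the reductions $(v_i^{(j)*})_B\equiv x_i - p_ip_j^{-1}x_j \pmod q$ of \eqref{red-face}, and the fact that $\td^N$ retains only the monomials with $m_j=0$ since its $j$-th slot is $\equiv 0$, I obtain modulo $q$
\begin{equation*}
d\,qx_1\cdots x_n S^N(D_j)_B \equiv d(-1)^N\!\!\sum_{\substack{|\bm|=N\\ m_j=0}}\frac{B_{\bm}}{\bm!}\prod_{i\neq j} x_i\bigl(x_i - p_ip_j^{-1}x_j\bigr)^{m_i-1}.
\end{equation*}
For a $\bm$ whose only vanishing coordinate is $m_j$, every factor is a genuine polynomial and a binomial expansion gives $[\bx^{\br}]\prod_{i\neq j}x_i(x_i-p_ip_j^{-1}x_j)^{m_i-1} = \prod_{i\neq j}\binom{m_i-1}{r_i-1}(-p_ip_j^{-1})^{m_i-r_i}$, where the degree bookkeeping closes automatically because $\sum_{i\neq j}(m_i-r_i)=r_j$ forces the $x_j$-degree to equal $r_j$. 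Using the same relation to collect the powers of $p_j$ and the sign into the absent $i=j$ slot via $\binom{-1}{r_j-1}=(-1)^{r_j-1}$, this matches $(-1)^N$ times the summand of \eqref{eq-thm} attached to $\bm$. Hence every $\bm$ with a single zero coordinate is produced exactly once, by the unique outer cone $D_j$ with $m_j=0$.

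The hard part is the remaining indices, those $\bm$ with $m_i=0$ for some $i\neq j$: then $(x_i - p_ip_j^{-1}x_j)^{m_i-1}$ carries a negative exponent, the individual $D_j$-piece is a rational function with a pole along the inner hyperplane $x_i = p_ip_j^{-1}x_j$, and its coefficient of $\bx^{\br}$ is undefined in isolation. This is precisely the higher-dimensional phenomenon absent when $n=2$. To control it I would extract coefficients through the iterated-residue (Parshin point) formalism, whose validity over the coefficient ring $\FZ/q\FZ$ is the content of Appendix B: the total $d\,\td_C^N$ being an honest polynomial, its iterated residue is independent of the chosen ordering of the coordinate hyperplanes and computes $[\bx^{\br}]$ correctly, and by linearity it is the sum of the iterated residues of the separate $D_j$-pieces. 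The pole along each inner hyperplane is shared by exactly two outer cones and cancels in the sum, while the surviving regular parts reassemble — via a Chu--Vandermonde summation — into exactly the summands of \eqref{eq-thm} indexed by the $\bm$ with two or more vanishing coordinates. Proving this cancellation-and-reassembly, i.e. that the iterated residue of the $D_j$-pieces accounts for the multiple-zero $\bm$'s with the correct sign and multiplicity, is the crux; the binomial identity I would verify by hand and the ordering-independence is supplied by Appendix B.
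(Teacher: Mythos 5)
Your framework coincides with the paper's own proof up to and including the mod-$q$ reduction: the left side of \eqref{eq-thm} is $(-1)^N$ times the coefficient of $\bx^{\br}$ in $d\,\td_C^N$, integrality is Thm.\ref{integrality}, the inner cones drop out, and your computation for those $\bm$ whose \emph{only} vanishing coordinate is $m_j$ (including the sign bookkeeping via $\binom{-1}{r_j-1}=(-1)^{r_j-1}$) is correct and agrees with \eqref{red-j-cone}. The gap is exactly where you say the crux is: for $\bm$ with two or more vanishing coordinates you prove nothing, and the mechanism you sketch is both unverified and not the one that works. Note the tension in your own setup: once you invoke the iterated-constant-term formalism with a fixed ordering of $\fA=(x_1,\ldots,x_n)$, each $F_j$ is $\fA$-admissible over $\FZ/q\FZ$ (Appendix B) and has a well-defined $\ict_\fA(F_j)$, and by linearity the coefficient you want is $\sum_j\ict_\fA(F_j)$; at that point there is nothing left to ``cancel'' between cones --- what remains is simply to \emph{evaluate} $\ict_\fA$ on the multiple-zero terms of each $F_j$, which you never do. Your alternative picture (polar parts along each inner hyperplane cancelling pairwise between two outer cones, the regular remainders reassembling by Chu--Vandermonde) is also not a pairwise matter: a $\bm$ with $k\ge 3$ zero coordinates occurs in $k$ different $F_j$'s, each of whose $\bm$-terms carries simple poles along $k-1$ distinct inner hyperplanes, and you supply no identity organizing this.

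The paper's resolution needs no cancellation and no Vandermonde-type identity. Computing $\ict_\fA$ of the $\bm$-term of $F_j$ by successive one-variable Laurent expansions (equations \eqref{constant-term} and \eqref{j-constant-term} of the paper), one finds that it vanishes unless $m_i\ge r_i\ge 1$ for every $i<j$; in particular it vanishes whenever some coordinate $m_i$ with $i<j$ is zero. Hence each $\bm$ with at least one vanishing coordinate contributes through exactly \emph{one} outer cone, namely $F_j$ with $j$ the position of the first zero coordinate of $\bm$ (the generalized binomial coefficients $\binom{-1}{r_i-1}$ you used appear there for the indices $i>j$ with $m_i=0$, produced by the Laurent expansion itself); the resulting index sets $M_j$ are disjoint and their union is precisely the index set of \eqref{eq-thm}. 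To complete your proof you must either adopt this support/disjointness argument, or actually prove your cancellation-and-reassembly claim; as written, the hardest case of the theorem rests on an unproved assertion.
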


\begin{remark}\label{summation}
Because of vanishing of some Bernoulli numbers 
and binomial coefficients, we can restrict the summation in the above statement 
over $\bm$ such that $m_i$ is even if $m_i > 1$ and $m_i \geq r_i$ if $m_i \neq 0$.
\end{remark}

\begin{remark}
$ \frac{dB_{\bm}}{\bm!}$ in \eqref{eq-thm} are integers by the definition of $d$.
\end{remark}

\begin{proof}
The first statement is proved in Thm.\ref{integrality}.
It remains to prove \eqref{eq-thm}.
Let  $\br = (r_1, r_2,\cdots, r_n) \in \FZ_{\geq 1}^n$ and let $N = |\br| = \sum r_i$.
Let $M^N$ be the set of $\bm \in \FZ_{\geq 0}^n$ with $|\bm| = N$.

The left hand side of \eqref{eq-thm} is ($(-1)^N$-times) the coefficient of
$\bx^\br=x_1^{r_1}\cdots x_n^{r_n}$ in the power series expansion of
$$
d \td_C^N(x_1,\ldots,x_n) = dq x_1\cdots x_n S^N(C)_B.
$$ 
It can be computed as the \textit{iterated constant term} of 
$$d qx_1 \cdots x_n S^N(C)_B/(x_1^{r_1} \cdots x_n^{r_n})$$ 
with respect to $\fA=\{x_1,\ldots,x_n\}$ (c.f. App.B).
In general, iterated constant term depends on the order of the hyperplanes in the flag. 
As $dq x^1\cdots x^n S^N(C)/(x_1^{r_1} \cdots x_n^{r_n})$ is 
a Laurent polynomial in $x_1,\cdots, x_n$, 
the iterated constant term is simply the constant term
and the order in $\fA$ does not matter. We calculate
the iterated constant terms of rational functions (the  Todd polynomials of
cones appearing in the nonsingular decomposition) which are not Laurent polynomials in $x_1,\cdots,x_n$, 
and the order of hyperplanes should be fixed once. 
However,  before summing up, individual iterated constant
term is dependent on the order.

We know that the homogeneous rational
function $d qx_1 \cdots x_n S^N(D_j)_B$ belongs to
$\fS^{-1}\FZ[x_1, \cdots, x_n]$ by \eqref{integrality-outer-sum}. 
From \eqref{red-face} and \eqref{todd-N}, we see that for $1 \leq j \leq n$
\begin{multline*}
d qx_1 x_2 \cdots x_n S^N(D_j)_B \equiv  (-1)^N qx_1 x_2 \cdots x_n \; \times \\
\sum_{\bm \in M^N} 
d \frac{B_{\bm}}{\bm !} \;
\prod_{\substack{1\leq i \leq n\\i\ne j}}(x_i - p_i p_j^{-1}x_j)^{m_i-1} \cdot (qx_j)^{m_j-1} \pmod{ q} .
\end{multline*}
We will apply this to (\ref{red-cone}).
First, note that the term corresponding 
to $\bm=(m_1,\cdots, m_n)$ in the above summation
belongs to $\fS^{-1}\FZ[x_1, \cdots, x_n]$ if $m_j \geq 1$.
When multiplied by $qx_1\cdots x_n$, 
it vanishes as a rational function with coefficient in $\FZ/q\FZ$. 
Hence the above remains the same if we 
restrict the summation over terms with $m_j = 0$:
\begin{multline}\label{red-j-cone}
d qx_1 x_2\cdots x_n S^N(D_j)_B \equiv  (-1)^N x_1 
\cdots\widehat{x_j}\cdots x_n\;\times \\
\sum_{\substack{\bm \in M^N\\ m_j=0}} d \frac{B_{\bm}}{\bm !}\;
\prod_{\substack{1\leq i \leq n \\ i\ne j}}(x_i - p_i p_j^{-1}x_j)^{m_i-1} \pmod{q} 
\end{multline}

So in view of \eqref{red-cone} 
we have to calculate the iterated constant term with respect to $\fA$
of the right hand side of the following congruence equation mod $q$:

\begin{equation}\label{eq-mod-q}
\frac{d qx_1 \cdots x_n S^N(C)_B}{x_1^{r_1} \cdots x_n^{r_n}}
\equiv \sum_{j=1}^n (-1)^N
\sum_{\substack{\bm \in M^N \\m_j=0}} d \frac{B_{\bm}}{\bm !}  \;
\prod_{\substack{1\leq i \leq n \\ i\ne j}}
\frac{(x_i - p_i p_j^{-1}x_j)^{m_i-1}}{x_i^{r_i -1}} \cdot \frac{1}{x_j^{r_j}} \;  .
\end{equation}

%For $1 \leq j \leq n$, 
Let $F_j$ denote the $j$-th rational function in the summation of
the right hand side of (\ref{eq-mod-q}):
$$
F_j := \sum_{\substack{\bm \in M^N \\m_j=0}} d \frac{B_{\bm}}{\bm !}  \;
\prod_{\substack{1\leq i \leq n \\ i\ne j}}
\frac{(x_i - p_i p_j^{-1}x_j)^{m_i-1}}{x_i^{r_i -1}} \cdot \frac{1}{x_j^{r_j}} \; .
$$
We compute the iterated constant term of $F_j$.
%
%The iterated constant term of $F_j$ is computed 
%by considering each $\bm$ appearing in the summation.
For each $\bm \in M^N$ with $m_j = 0$, we have 
\begin{multline}\label{constant-term}
\ct_{x_{j+1}}\circ\cdots\circ\ct_{x_n}
\prod_{\substack{1\leq i \leq n \\ i\ne j}}
\frac{(x_i - p_i p_j^{-1}x_j)^{m_i-1}}{x_i^{r_i -1}}\cdot\frac{1}{x_j^{r_j}} = \\ 
\prod_{i=1}^{j-1}\frac{(x_i - p_i p_j^{-1}x_j)^{m_i-1}}{x_i^{r_i-1}} \cdot
\prod_{i=j+1}^{n}\binom{m_i -1}{r_i-1} (-p_i p_j^{-1})^{m_i - r_i} \cdot
x_j^{\sum_{i \geq j} m_i - r_i} .
\end{multline}
Here  individual constant term is computed 
by obtaining the Laurent series of $(x_i-p_ip_j^{-1}x_j)^{m_j-1}$ w.r.t.
$x_i$ through binomial expansion. 
Now the constant term with respect to $x_j$ of the above depends on the sign of %the exponent
$e = \sum_{i \geq j} m_i - r_i$. % of $x_j$. 
If $e  > 0$, then $\ct_{x_j}$ of (\ref{constant-term}) vanishes. 
If $e \leq 0$, then $\ct_{x_j}$ of (\ref{constant-term}) is equal to
\begin{multline}\label{j-constant-term}
\ct_{x_j}\left(
\prod_{i=1}^{j-1}\frac{(x_i - p_i p_j^{-1}x_j)^{m_i-1}}{x_i^{r_i-1}}  \cdot
\prod_{i=j+1}^{n}\binom{m_i -1}{r_i-1} (-p_i p_j^{-1})^{m_i - r_i} x_j^{\sum_{i \geq j} m_i - r_i}\right)\\= 
\sum_{a_1, \cdots, a_{j-1}} 
\prod_{i=1}^{j-1}\binom{m_i -1}{a_i} (-p_i p_j^{-1})^{a_i}x_i^{m_i - r_i-a_i}\cdot
\prod_{i=j+1}^{n}\binom{m_i -1}{r_i-1} (-p_i p_j^{-1})^{m_i - r_i} ,
\end{multline}
where the summation is over non-negative integers $a_1, \cdots , a_{j-1}$ with
$a_1 + \cdots + a_{j-1} = -e = \sum_{i \geq j} r_i - m_i$.
Now it is direct to see that 
$\ct_{x_1}\circ\ct_{x_2}\circ\cdots\circ\ct_{x_{j-1}}$ of \eqref{j-constant-term}
is supported at  $a_i = m_i - r_i$ for $1 \leq i \leq j-1$.
This can be satisfied only if $m_i \geq r_i$ for $1 \leq i \leq j-1$. 
\textit{A priori} unless $m_i>0$ for $1\le i \le j-1$, the iterated constant term vanishes. We will be in need of this later.
Hence  if  $\bm \in M^N$ is such that 
(a) $\sum_{i \geq j} m_i - r_i \leq 0$ and
(b) $m_i \geq r_i$ for $1 \leq i \leq j-1$, then we have
\begin{multline*}
\ict_{x_1,x_2,\cdots,x_n}
\prod_{\substack{1\leq i \leq n \\ i\ne j}}
\frac{(x_i - p_i p_j^{-1}x_j)^{m_i-1}}{x_i^{r_i -1}}\cdot\frac{1}{x_j^{r_j}} 
=\prod_{\substack{1\leq i \leq n \\ i\ne j}}\binom{m_i -1}{r_i-1} (-p_i p_j^{-1})^{m_i - r_i} \\
= \prod_{\substack{1\leq i \leq n \\ i\ne j}}\binom{m_i -1}{r_i-1} p_i^{m_i - r_i}
\cdot(-p_j^{-1})^{\sum_{i\neq j}m_i - r_i}
=(-1)\prod_{i=1}^{n}\binom{m_i -1}{r_i-1} p_i^{m_i - r_i}\\
\end{multline*}
where the last equality comes from $\sum_{i \neq j} m_i - r_i = r_j - m_j$ and $m_j = 0$.
Note (b) implies (a)  since $\sum_{i=1}^n m_i - r_i = 0$.
Taking the summation of above iterated constant terms,
we have
\begin{equation*}
\ict_{x_1,x_2,\cdots,x_n}(F_j) = 
\sum_{\bm \in M_j} (-d)
\frac{B_{\bm}}{\bm!} \prod_{i=1}^{n}\binom{m_i -1}{r_i-1} p_i^{m_i - r_i}
\end{equation*}
where the summation is over the set $M_j$ of $n$-tuples $\bm = (m_1,\cdots,m_n)$
of non-negative integers such that $|\bm| = N, \; m_j = 0$ 
and $m_i \geq r_i$ for $1 \leq i \leq j-1$. 

Notice that the sets 
$M_1, \cdots, M_n$ are disjoint to each other 
since $\bm=(m_1,\ldots,m_n)$ belongs to $M_j$ iff
$m_j=0$ and $m_i >0$ for $i<j$.
Since $\cup_{j=1}^n M_j$ is seen to be the set over which the summation 
of (\ref{eq-thm}) is taken,
we see that the righthand side of \eqref{eq-thm} is given as
$$
\sum_{j = 1}^n \ict_{x_1,x_2,\cdots,x_n}(F_j) = \sum_\bm(-d)
\frac{B_\bm}{\bm!}\prod_{i=1}^n \binom{m_i -1}{r_i-1} p_i^{m_i - r_i}.
$$
We have calculated the iterated constant terms of rational functions $F_j$ with 
coefficients in $\FQ$. But these calculations are still valid if we view $F_j$ as
rational functions with coefficients in $\FZ/q\FZ$. They are $\fA$-admissible
in the sense of App.~B. This implies that iterated constant terms
with respect to $\fA$ of both sides of (\ref{eq-mod-q}) are congruents modulo $q$,
which completes the proof.
\end{proof}

\begin{thm}\label{congruence-s}
For $\br = (r_1, r_2,\cdots, r_n) \in \FZ_{\geq 1}^n$,
let $N= |\br|$ and $d = d_{N,n}$. 
Then for $(q;p_1,\cdots,p_{n-1}) \in I_n$ we have
$$\frac{dq^{N-n+1}}{\br!} s_{\br }(q;p_1,\cdots,p_{n-1})  \in \FZ$$ 
and 
\begin{equation}\label{eq-congruence-s}
\frac{dq^{N-n+1}}{\br!} s_{\br }(q;p_1,\cdots,p_{n-1}) \; \equiv \;  
\sum_{\bm } (- d )\frac{B_{\bm}}{\bm!}\prod_{i=1}^{n}\binom{m_i -1}{r_i-1} 
p_i^{m_i - r_i} \mod q .
\end{equation}
The summation is over the set of $n$-tuples 
$\bm = (m_1,\cdots, m_n)$ of non-negative \textbf{even} integers with $|\bm| = N$ 
such that at least one of its coordinates is zero. (Recall we have put $p_n = -1$.)
\end{thm}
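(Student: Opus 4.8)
The integrality assertion is precisely Cor.\ref{integrality-s}, so the real content is the congruence \eqref{eq-congruence-s}. The plan is to deduce it from the Todd-coefficient congruence \eqref{eq-thm} via the inversion formula of Thm.\ref{relation-t-s}, arguing by induction on the dimension $n$. The base case $n=1$ is immediate: by Rem.\ref{defn-exceptional} the left side equals $\frac{d\,q^{N}}{N!}s_{N}(q)\equiv 0\pmod q$ (since $\frac{d}{N!}s_N(q)\in\FZ$ and $N\ge1$), while the right-hand sum is empty; alternatively one may take the case $n=2$ of \cite{J-L2} as the base. Write $\fR_t(\br;p)$ and $\fR_s(\br;p)$ for the right-hand sides of \eqref{eq-thm} and \eqref{eq-congruence-s}, viewed as polynomials in the $p_i$ with coefficients in $\frac1d\FZ$, and set $J=\{\,i:r_i=1\,\}$.

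First I would split $\fR_t(\br;p)$ combinatorially. Because $B_1=-\frac12$, $B_m=0$ for every odd $m>1$, and $\binom{m_i-1}{r_i-1}=0$ unless $m_i\ge r_i$, the only terms of $\fR_t(\br;p)$ carrying an odd coordinate $m_i$ are those with $m_i=1$ and $i\in J$. Grouping the terms according to $S=\{\,i:m_i=1\,\}\subseteq J$ and pulling out the scalar $\prod_{i\in S}B_1=(-\tfrac12)^{|S|}$, the surviving coordinates are forced to be even, so I expect
\begin{equation*}
\fR_t(\br;p)=\fR_s(\br;p)+\sum_{\emptyset\ne S\subseteq J}\Bigl(-\tfrac12\Bigr)^{|S|}\frac{d_{N,n}}{d_{N-|S|,\,n-|S|}}\;G_S(p),
\end{equation*}
where $G_S(p)$ is the even-$\bm$ sum over the coordinates outside $S$ (with at least one of them zero), written in the original variables $p_i$, and $S=\emptyset$ reproduces $\fR_s(\br;p)$. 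The ``at least one vanishing coordinate'' condition passes correctly to the residual sum since the coordinates in $S$ are nonzero.

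Next I would reconcile $G_S(p)$ with the twisted data of Thm.\ref{relation-t-s}. The lower-dimensional terms there are evaluated at $p^S_{j_k}\equiv-p_{j_r}^{-1}p_{j_k}$, with $p^S_{j_r}\equiv-1$ under $p_n=-1$. For an even multi-index with $\sum_k(m_{j_k}-i_{j_k})=0$ the twists cancel in the product,
\begin{equation*}
\prod_{k=1}^{r}(p^S_{j_k})^{\,m_{j_k}-i_{j_k}}
=(-1)^{\sum_k(m_{j_k}-i_{j_k})}\,p_{j_r}^{-\sum_{k<r}(m_{j_k}-i_{j_k})}\prod_{k<r}p_{j_k}^{\,m_{j_k}-i_{j_k}}
=\prod_{k=1}^{r}p_{j_k}^{\,m_{j_k}-i_{j_k}},
\end{equation*}
so that $G_S(p)=\fR_s(\br^S;p^S)/(-d_{N-|S|,n-|S|})$, where $\br^S$ is $\br$ with the $S$-coordinates deleted. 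Then I would multiply the second identity of Thm.\ref{relation-t-s} by $d_{N,n}q^{N-n+1}/\br!$, noting $(N-|S|)-(n-|S|)+1=N-n+1$ and $(\br^S)!=\br!$, apply Thm.\ref{thm-congruence} to $t_{\br}$ and the induction hypothesis to each $s_{\br^S}$, and obtain
\begin{equation*}
\fR_t(\br;p)\equiv\frac{d_{N,n}q^{N-n+1}}{\br!}\,s_{\br}
+\sum_{\emptyset\ne S\subseteq J}\Bigl(-\tfrac12\Bigr)^{|S|}\frac{d_{N,n}}{d_{N-|S|,\,n-|S|}}\,\fR_s(\br^S;p^S)\pmod q.
\end{equation*}
Comparing with the splitting above, the two correction sums are identical and cancel, leaving $\frac{d_{N,n}q^{N-n+1}}{\br!}s_{\br}\equiv\fR_s(\br;p)$, which is \eqref{eq-congruence-s}. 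That every scalar $(\tfrac12)^{|S|}d_{N,n}/d_{N-|S|,n-|S|}$ is an integer --- hence all reductions modulo $q$ are legitimate --- follows from $2^{|S|}d_{N-|S|,n-|S|}\mid d_{N,n}$, recorded in the proof of Cor.\ref{integrality-s}.

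I expect the main obstacle to be the bookkeeping of the first two steps: tracking the ``one coordinate vanishes'' constraint as coordinates are deleted, and checking that the factor $(-\tfrac12)^{|S|}$ generated by the $m_i=1$ coordinates of $\fR_t$ matches exactly the $(-\tfrac12)^{|S|}$ of Thm.\ref{relation-t-s} after the twisted variables collapse. The parity restriction $|S|\equiv N\bmod2$ in Thm.\ref{relation-t-s} needs no separate treatment: a sum over even multi-indices of odd total weight is empty, so the corresponding $\fR_s(\br^S;p^S)$ vanishes, in accordance with Prop.\ref{odd-degree-sum}.
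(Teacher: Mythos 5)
Your proposal is correct and is essentially the paper's own proof: the same induction on $n$ with base cases $n=1,2$, the same stratification of the summation in \eqref{eq-thm} by the set $S=\{i : m_i=1\}\subseteq J$ (the paper's $M(T)$), the same use of the second identity of Thm.\ref{relation-t-s} multiplied by $d\,q^{N-n+1}/\br!$ together with Thm.\ref{thm-congruence} and the induction hypothesis, and the same appeal to $2^{|T|}d_{N-|T|,\,n-|T|}\mid d_{N,n}$ from the proof of Cor.\ref{integrality-s}; you even spell out the twist-cancellation $\prod_i (p^T_i)^{m_i-r_i}=\prod_i p_i^{m_i-r_i}$ that the paper leaves implicit. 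The only blemish is a normalization inconsistency in your auxiliary $G_S$ --- your splitting formula and the identification $G_S(p)=\fR_s(\br^S;p^S)/(-d_{N-|S|,\,n-|S|})$ cannot both hold under a single definition of $G_S$ --- but this is harmless, since the combined statement you actually use, namely that the $M(S)$-partial sum of \eqref{eq-thm} equals $\left(-\tfrac12\right)^{|S|}\tfrac{d_{N,n}}{d_{N-|S|,\,n-|S|}}\,\fR_s(\br^S;p^S)$ modulo $q$, is the correct one and makes the final cancellation go through.
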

\begin{remark}
As in Remark \ref{summation},  
we can restrict the summation in the above statement 
over $\bm$ such that $m_i \geq r_i$ if $m_i \neq 0$
\end{remark}
\begin{proof}
The first statement is proven in Cor.\ref{integrality-s}.
If $N$ is odd, both sides of (\ref{eq-congruence-s}) 
vanish by Prop.\ref{odd-degree-sum}.
Hence we assume $N$ is even. As in the proof of the same proposition,
we will use induction on $n$. 
Let $d' = d_{N,n}/\br !$.

When $n=1$ (so $N = r_1$), we have $s_{r_1}(q) = B_{r_1}$ by definition
(Remark \ref{defn-exceptional}). In this case, (\ref{eq-congruence-s})  is
$d_{N,1}q^N B_N/N! \equiv 0 \mod q$, which is obvious since
$d_{N,1} B_N/N!$ is an integer by definition of $d_{N,1}$.
The case when $n=2$ is Thm.1.1 in \cite{J-L2}.

Let $M$ be the set of $\bm = (m_1,\cdots, m_n) \in \FZ_{\geq 0}^n$ 
with $|\bm| = N$ such that at least one of its coordinates is zero and
satisfies the conditions given in Remark \ref{summation}.
In particular, if the $i$-th coordinated of  an $\bm \in M$ is odd,
then $r_i =1$ and $m_i = 1$.

Let $J = \{ \;1 \leq  j \leq n \; | \; r_j = 1 \; \}$. For $T \subset J$ with $|T|$ even,
let $M(T)$ be the set of $(m_1,\cdots, m_n) \in M$ such that 
$m_j = 1$ for $j \in T$ and $m_j \neq  1$ (hence is even) if $j \not\in T$. 
Note that the summation in (\ref{eq-congruence-s}) is over $M(\emptyset)$
while the summation in (\ref{eq-thm}) is over $M$.

If $|J| \leq  1$, then $s_\br = t_\br$ 
by Thm.\ref{relation-t-s}
and $M(\emptyset) = M$. Hence the above corollary is a restatement of 
the last theorem. In general, let $T$ be a nonempty subset of $J$ of even order $k$.
Then we have by induction assumption that 
the partial sum over $M(T)$ of the sum  in (\ref{eq-thm}) is congruent o 
$B_1^k = (-1/2)^k$ times 
$d'q^{N-n+1} s_{r_{j_1},\cdots, r_{j_{n-k}}} (q; p^T_{j_1},\ldots, p^T_{j_{n-k-1}})$
modulo $q$.
in the notation of Thm.\ref{relation-t-s}
(so that $\{j_1,\cdots,j_{n-k}\} = \{1,\cdots, n\} \backslash T$). Since $M$ is the disjoint
union of $M(\emptyset)$ and $M(T)$'s for $T$ ranging over nonempty subsets of $J$
of even order, we see from Thm.\ref{thm-congruence} and the second equation
(multiplied by $d' q^{N-n+1}$) in Thm.\ref{relation-t-s} that 
$d'q^{N-n+1} s_{\br}(q;p_1,\cdots,p_{n-1})$ is congruent mod $q$ to
the partial sum over $M(\emptyset)$, which proves (\ref{eq-congruence-s}).
\end{proof}

%%%%%%%%%%%%%%%%%%%%%%%%%%%%%%%%%%%%%%%%%%%%%%%%

\section{Equidistribution of generalized Dedekind sums and exponential sums}

Given $\br=(r_1,\cdots, r_n) \in \FZ^n_{> 0}$, let $N = |\br|$.
Let $f_{\br}(p_1,\cdots,p_{n-1})$ be the right hand side of (\ref{eq-congruence-s})
considered as a Laurent polynomial in $p_1,\cdots, p_{n-1}$ with integral coefficients.
For $(q;p_1,\cdots,p_{n-1}) \in I_n$,
we have by Thm.\ref{congruence-s}
\begin{equation}\label{fractional-part-s}
\left< \frac{d_{N,n}q^{N-n}}{\br!} s_{\br}(q;p_1,\cdots,p_{n-1}) \right>
= \left<\frac{1}{q}f_{\br}(p_1,\cdots,p_{n-1}) \right>
\end{equation}
where $\left<t\right> = t - [t]\in[0,1)$ denotes the fractional part of $t$ and in the right hand side
we take $p_i^{-1}$ to be an inverse modulo $q$(i.e. any integer
such that $p_i^{-1}p_i \equiv 1 \mod q$).

The goal of this section is to show the equidistribution 
of this sequence of numbers for varying $(q;p_1,\cdots,p_{n-1})$ with fixed $\br$
following the line of \cite{J-L2}. Since these sums are multi-indexed, the classical definition
of equidistribution of sequence of numbers in $[0,1)$ can not be applied directly.
Instead, we take a variant of Weyl's equidistribution criterion as our definition.
For $x \in \FR_{>0}$, let
$I_n(x)$ be the set of $(q;p_1,\cdots,p_{n-1}) \in I_n$ with $q < x$ (For definition of $I_n$, see \textsc{Notations}).
Again the limit of the average of the point mass weakly converges to the probability measure on $[0,1)$ which is the restriction of the standard Lebesgue measure.
\begin{defn}[Weyl's equidistribution]\label{definition-equidistribution}
Let $A$ be a set of numbers $a_{(q;p_1,\cdots,p_{n-1})}$ indexed by $(q;p_1,\cdots,p_{n-1})\in I$.
%$$
%A = \left\{  a(q;p_1,\cdots,p_{n-1}) \in [0,1)  |  (q;p_1,\cdots, p_{n-1}) \in I_n \right\}$$
%be a set of numbers indexed by $I_n$.
We say the set $A$ is equidistributed in $[0,1)$ if for any nonzero integer $k$, we have
\begin{equation*}
\lim_{x \rightarrow \infty} \frac{1}{|I_n(x)|} 
\sum_{(q;p_1,\cdots,p_{n-1}) \in I_n(x)} 
\exp\left(2\pi i k\cdot a_{(q;p_1,\cdots,p_{n-1})}\right) = 0 .
\end{equation*}
\end{defn}
More generally, given any Laurent polynomial $f(x_1,\cdots,x_{n-1})$
with integral coefficients, we may consider the fractional parts of 
$f(p_1,\cdots,p_{n-1})/q$ as in the first paragraph of this section. 
We will prove that this set of numbers in $[0,1)$ is equidistributed in the above sense
if $f$ satisfies the condition (H) given below. For each $i$, $f$ can be written as
$f = \sum_j g_j(x_1,\cdots, \widehat{x_i},\cdots, x_{n-1}) x_i^j$, a Laurent polynomial
in $x_i$ with coefficients in the ring of Laurent polynomials in other variables.

In the remaining section, we suppose that $f$ satisfies the following:

%\begin{itemize}
%\begin{framed}{

\begin{verse}[H]
There exists $i$ such that when written 
as a Laurent polynomial in $x_i$
as above, the coefficient of the highest degree (in $x_i$) term is a monomial
in the other variables $x_1,\cdots, \widehat{x_i},\cdots, x_{n-1}$.
\end{verse}
%}
%\end{framed}
%\end{itemize}

Note that the Laurent polynomial $f_{\br}$ of (\ref{eq-congruence-s}),
which gives the fractional part of generalized Dedekind sums $s_{\br}$
satisfies this assumption: for each variable $x_i$, the coefficient of
the highest degree term $x_i^{N-r_i}$ is a monomial.

\begin{prop}\label{equidistribution}
Let $f(x_1,\cdots,x_{n-1}) \in \FZ[x_1^{\pm},x_2^{\pm},\cdots,x_{n-1}^{\pm}]$
satisfying the assumption (H).
%and suppose that it satisfies the above assumption (H).
Then the fractional parts of $f(p_1,\cdots,p_{n-1})/q$ are equidistributed 
in the sense of Def.\ref{definition-equidistribution} : for any nonzero integer $k$,
\begin{equation*}
\lim_{x \rightarrow \infty} \frac{1}{|I_n(x)|} 
\sum_{(q;p_1,\cdots,p_{n-1}) \in I_n(x)} 
\exp\left(\frac{2\pi i k}{q}f(p_1,\cdots,p_{n-1})\right) = 0 .
\end{equation*}
\end{prop}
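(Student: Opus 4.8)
The plan is to reduce the two-parameter Weyl sum to a sum over the modulus $q$ alone, and then to bound the inner complete exponential sum for every $q$ by using condition (H) to collapse it to a one-variable sum. Fix a nonzero integer $k$. Grouping the terms of $I_n(x)$ according to the value of $q$, and noting that for fixed $q$ the constraint ``$p_i < q$ relatively prime to $q$'' selects exactly one representative of each class in $(\FZ/q\FZ)^*$, I would write
\begin{equation*}
\sum_{(q;p_1,\cdots,p_{n-1})\in I_n(x)}\exp\!\left(\frac{2\pi i k}{q}f(p_1,\cdots,p_{n-1})\right)
=\sum_{1<q<x}K(kf,q),
\end{equation*}
where $K(kf,q)=\sum_{p_1,\cdots,p_{n-1}\in(\FZ/q\FZ)^*}\exp\!\left(\frac{2\pi i k}{q}f(p_1,\cdots,p_{n-1})\right)$ is the complete exponential sum over units. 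Since $|I_n(x)|=\sum_{1<q<x}\phi(q)^{n-1}\asymp x^n$, the proposition reduces to showing $\sum_{1<q<x}|K(kf,q)|=o(x^n)$; in fact any \emph{uniform} power saving $|K(kf,q)|\le C_k\,q^{(n-1)-\delta}$ for \emph{all} $q$ and some fixed $\delta>0$ would finish the proof, since then the sum is $O(x^{n-\delta})$. I stress that the outer range is all $q<x$ and not merely primes, so the prime-only estimate of Prop.~\ref{prop:6.4} cannot be quoted directly here.

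Next I would use (H) to replace the $(n-1)$-dimensional sum by a one-dimensional one. After relabeling, suppose $x_{n-1}$ is the distinguished variable, so that as a Laurent polynomial in $x_{n-1}$ the top term of $f$ is a monomial $c\,x_1^{e_1}\cdots x_{n-2}^{e_{n-2}}x_{n-1}^{D}$. Freezing $p_1,\cdots,p_{n-2}\in(\FZ/q\FZ)^*$ and summing over $p_{n-1}$ first, the inner sum is the one-variable complete sum $\sum_{t\in(\FZ/q\FZ)^*}\exp\!\left(\frac{2\pi i k}{q}f(p_1,\cdots,p_{n-2},t)\right)$, a nonconstant Laurent polynomial phase in $t$ whose leading coefficient equals $k\,c\,p_1^{e_1}\cdots p_{n-2}^{e_{n-2}}$ --- a \emph{fixed} constant $kc$ times a unit modulo $q$. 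This is the whole point of (H): the leading coefficient is never a nontrivial zero-divisor, so the phase is a genuinely nonconstant rational function modulo $q$ for every $q$ coprime to $kc$, and for the remaining $q$ its $\ell$-adic valuations at primes $\ell\mid q$ are bounded by those of the fixed number $kc$.

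The heart of the argument is then a uniform estimate for this one-variable sum, valid for all moduli. For $q$ prime it is the classical Weil--Bombieri bound for a nonconstant rational phase, giving $O_{D}(q^{1/2})$; for prime powers one invokes the stationary-phase/Cochrane--Zheng estimates, which yield square-root cancellation under the nondegeneracy supplied by (H) (or, in the worst case, a positive power saving of the shape $q^{1-1/(D+s)}$); and general $q$ is handled by the corresponding general-modulus one-variable estimates, which are multiplicative in $q$, the leading coefficient at each prime-power factor remaining a unit times a bounded constant. Carrying the $\phi(q)^{n-2}$ factor from the frozen variables, this gives $|K(kf,q)|\le \phi(q)^{n-2}\cdot C_k\,q^{1-\delta}\le C_k\,q^{(n-1)-\delta}$ uniformly in $q$, whence $\sum_{1<q<x}|K(kf,q)|=O(x^{n-\delta})=o(x^n)$, which is exactly the criterion of Def.~\ref{definition-equidistribution}. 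I expect the main obstacle to be precisely this last uniform one-variable bound: one must control the prime-power sums and the moduli sharing a factor with the fixed constant $kc$, checking in each case that the implied constant depends only on $k$, on $c$, and on the bounded degrees $D$ and $s$ of the Laurent phase, and never on $q$ or on the frozen units $p_1,\cdots,p_{n-2}$.
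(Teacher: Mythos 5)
Your proposal is correct and follows essentially the same route as the paper: reduce to the complete sums $K(f,q)$, use (H) to freeze all but the distinguished variable and collapse to a one-variable Laurent sum with unit leading coefficient, then combine the Weil--Deligne bound at primes, stationary-phase estimates at prime powers (the paper uses Lemmas 12.2--12.3 of \cite{IK} together with the Hensel-type root-counting of Appendix~A, in place of your Cochrane--Zheng citation), CRT multiplicativity, and a bounded correction at the finitely many primes dividing the fixed leading constant, against $|I_n(x)|\gg x^{n-\epsilon}$. The only caveat is that multiplicativity gives a constant $C^{\omega(q)}\ll q^{\epsilon}$ rather than a truly uniform $C_k$, but this is absorbed by shrinking $\delta$, exactly as in the paper's Prop.\ref{final estimate}.
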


\begin{thm}\label{thm-equidist}
Let $\br=(r_1,\cdots, r_n) \in \FZ^n_{> 0}$ and
suppose $|\br|$ is even.
Then fractional parts of generalized Dedekind sums given as % (\ref{fractional-part-s})
$$\frac{d_{N,n}q^{N-n}}{\br!} s_{\br}(q;p_1,\cdots,p_{n-1})$$
are equidistributed in $[0,1)$.
\end{thm}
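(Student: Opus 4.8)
The plan is to deduce Theorem~\ref{thm-equidist} as an immediate specialization of Proposition~\ref{equidistribution}. The bridge between the two is the identity \eqref{fractional-part-s} established via Theorem~\ref{congruence-s}: for every $(q;p_1,\cdots,p_{n-1}) \in I_n$ we have
\begin{equation*}
\left< \frac{d_{N,n}q^{N-n}}{\br!} s_{\br}(q;p_1,\cdots,p_{n-1}) \right>
= \left<\frac{1}{q}f_{\br}(p_1,\cdots,p_{n-1}) \right> ,
\end{equation*}
where $f_{\br}$ is the right-hand side of \eqref{eq-congruence-s} regarded as a Laurent polynomial in $p_1,\cdots,p_{n-1}$ with integer coefficients. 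Since the two quantities have equal fractional parts, the exponential sums defining equidistribution (Def.\ref{definition-equidistribution}) coincide term by term: for any nonzero integer $k$,
\begin{equation*}
\exp\left(2\pi i k \cdot \frac{d_{N,n}q^{N-n}}{\br!} s_{\br}(q;p_1,\cdots,p_{n-1})\right)
= \exp\left(\frac{2\pi i k}{q} f_{\br}(p_1,\cdots,p_{n-1})\right).
\end{equation*}
Thus the Weyl sum for the Dedekind-sum sequence is literally the Weyl sum for $f_{\br}/q$, and it suffices to invoke Proposition~\ref{equidistribution} for the polynomial $f = f_{\br}$.

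The second step is to verify that $f_{\br}$ meets the hypothesis (H) required by Proposition~\ref{equidistribution}. This is the content of the remark immediately following the statement of (H): for each variable $x_i$, the highest-degree term of $f_{\br}$ in $x_i$ is $x_i^{N-r_i}$ with a monomial coefficient in the remaining variables. The reason is structural. Inspecting \eqref{eq-congruence-s}, the exponent of $p_i$ in a typical summand is $m_i - r_i$, which is maximized when $m_i$ is as large as possible; because $\sum_i m_i = N$ and at least one coordinate of $\bm$ must vanish, the top degree in $x_i$ is attained only on the single tuple $\bm$ with $m_i = N - r_i$ filling the remaining weight into the one forced-zero slot, leaving a monomial in the other $x_j$. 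I would therefore point to this already-noted feature and confirm (H) holds for \emph{every} $i$, so a fortiori for \emph{some} $i$.

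With (H) in hand the theorem follows directly, there being essentially no further obstruction at this level: the genuine analytic difficulty—bounding the exponential sum $K(f,q)$ and checking the nondegeneracy needed to apply the purity result—has already been absorbed into Proposition~\ref{equidistribution}. The only point demanding a touch of care is the even-weight hypothesis $|\br|$ even: when $N$ is odd both $s_{\br}$ and $f_{\br}$ vanish identically (Prop.\ref{odd-degree-sum}), so the statement is either vacuous or trivially outside the intended regime, and restricting to even $N$ ensures $f_{\br}$ is the genuine nonzero Laurent polynomial whose equidistribution Proposition~\ref{equidistribution} governs. Hence the hardest part of the argument is not in this proof at all but upstream, in establishing Proposition~\ref{equidistribution}; here the work is purely the bookkeeping of confirming (H) and quoting \eqref{fractional-part-s}.
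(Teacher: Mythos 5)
Your proposal is correct and takes essentially the same route as the paper: the paper deduces Theorem~\ref{thm-equidist} from Proposition~\ref{equidistribution} in precisely this way, via the identity \eqref{fractional-part-s} and the remark preceding Proposition~\ref{equidistribution} that $f_{\br}$ satisfies (H) in every variable. One bookkeeping slip in your verification of (H): the top degree $N-r_i$ in $x_i$ is attained at the tuple $\bm$ with $m_i = N$ and all other coordinates zero (the exponent being $m_i - r_i = N - r_i$), not at $m_i = N - r_i$; the conclusion, and hence the proof, is unaffected.
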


The proof consists of three steps. At each step, we estimate the following
exponential sums for $q$ a prime, a prime power and any composite number, respectively.
Then this together with the estimation of the order of $I_n(x)$ completes the proof.

For a positive integer $q$,  let $K(f,q)$ be the following exponential sum,
which is a partial sum of the above over $n$-tuples with given $q$.
(Replacing $f$ by $kf$, we may consider only the case when $k=1$ in the theorem.)
\begin{equation}\label{definition-exp-sum}
K(f,q)= \sum_{p_1,\cdots,p_{n-1}} \be_q(f(p_1,\cdots,p_{n-1})) ,
\end{equation}
where the summations are  over $(p_1,\cdots,p_{n-1}) \in \FZ^{n-1}$ with $1 \leq p_j < q$
relatively prime to $q$  $(1 \leq j \leq n-1)$ and $\be_q(x):=\exp(2\pi i \frac{x}{q}).$

\begin{prop}\label{prop:6.4}
There exists a constant $C_1$ depending only on $f$ such that 
we have for almost all prime $q$ (hence for any prime $q$ if we enlarge $C_1$), 
\begin{equation*}
\left| K(f,q) \right| \leq C_1 q^{(n-1) -\frac{1}{2}} .
\end{equation*}
\end{prop}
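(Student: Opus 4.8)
The plan is to reduce the estimation of $K(f,q)$ to a one-variable exponential sum by exploiting the hypothesis (H), and then to invoke a Weil-type bound for the remaining sum over a single variable. Suppose (H) holds for the index $i$; after relabeling we may assume $i = n-1$, so that when we write $f = \sum_{j} g_j(p_1,\cdots,p_{n-2})\, p_{n-1}^{j}$, the coefficient $g_{d}$ of the top-degree term in $p_{n-1}$ is a monomial in $p_1,\cdots,p_{n-2}$ (say $g_d = c\, p_1^{a_1}\cdots p_{n-2}^{a_{n-2}}$ with $c$ a nonzero integer). The first step is to fix the values of $p_1,\cdots,p_{n-2} \in (\FZ/q\FZ)^*$ and carry out the inner sum over $p_{n-1}$ alone:
\begin{equation*}
K(f,q) = \sum_{p_1,\cdots,p_{n-2} \in (\FZ/q\FZ)^*}
\Big( \sum_{p_{n-1} \in (\FZ/q\FZ)^*} \be_q\big(f(p_1,\cdots,p_{n-1})\big)\Big).
\end{equation*}

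Next I would estimate the inner sum. For each fixed choice of $p_1,\cdots,p_{n-2}$, the inner sum is a one-variable exponential sum $\sum_{p_{n-1}} \be_q(h(p_{n-1}))$ where $h$ is a Laurent polynomial in $p_{n-1}$ whose leading coefficient is $g_d = c\,p_1^{a_1}\cdots p_{n-2}^{a_{n-2}}$. Since each $p_k$ is a unit mod $q$, this leading coefficient is a unit provided $q \nmid c$; this is where the ``almost all prime $q$'' and the enlargeable constant $C_1$ enter — we simply exclude the finitely many primes dividing $c$ or the other bounded data of $f$. For a prime $q$ not dividing the leading coefficient, the Laurent polynomial $h$ is genuinely of degree $d$ (after clearing the pole in $p_{n-1}$ by multiplying through by a unit power of $p_{n-1}$, which only shifts the exponent and does not affect unitality), and the Weil bound for one-variable exponential sums over $\FZ/q\FZ$ gives
\begin{equation*}
\Big| \sum_{p_{n-1} \in (\FZ/q\FZ)^*} \be_q\big(h(p_{n-1})\big) \Big| \leq C_0\, q^{1/2},
\end{equation*}
with $C_0$ depending only on the degree data of $f$ (hence only on $\br$ and $n$), uniformly in the fixed outer variables.

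Finally, I would assemble the pieces: there are at most $q^{\,n-2}$ choices for the tuple $(p_1,\cdots,p_{n-2})$, so summing the uniform bound $C_0 q^{1/2}$ over these outer variables yields
\begin{equation*}
|K(f,q)| \leq q^{\,n-2}\cdot C_0\, q^{1/2} = C_0\, q^{(n-1)-\frac12},
\end{equation*}
which is the desired estimate with $C_1 = C_0$. The main obstacle I anticipate is the rigorous justification of the one-variable Weil bound in the correct generality: one must be careful that (H) guarantees a \emph{nonvanishing leading coefficient that remains a unit} for all unit values of the outer variables — precisely why (H) demands a \emph{monomial} rather than a general polynomial coefficient, since a nonmonomial leading coefficient could vanish (or become a nonunit) on a positive-density set of outer tuples, destroying the uniform square-root cancellation. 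Handling the Laurent (as opposed to polynomial) nature of $h$ and the restriction of the summation variable to units $(\FZ/q\FZ)^*$ rather than all of $\FZ/q\FZ$ will also require a short argument, typically by an inclusion–exclusion or by absorbing the unit constraint into the standard Weil estimate; I expect this bookkeeping to be routine once the leading-term unitality is secured.
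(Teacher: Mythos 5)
Your proposal is correct and takes essentially the same approach as the paper: use (H) so that, for all but finitely many primes, the specialization of $f$ at any unit values of the outer variables is a nonconstant one-variable Laurent polynomial with unit leading coefficient, bound the inner one-variable sum by a constant times $q^{1/2}$ uniformly, and sum trivially over the at most $q^{n-2}$ outer tuples. The one ingredient you defer as ``routine bookkeeping'' --- square-root cancellation for \emph{Laurent} (rather than polynomial) one-variable sums over $(\FZ/q\FZ)^*$ --- is precisely what the paper supplies by citing Deligne ((3.5.2) of \cite{Del} applied with $X_0=\FP^1$, giving the constant $\nu_0(f)+\nu_\infty(f)$); note that your parenthetical device of multiplying through by a power of $p_{n-1}$ changes the summand of the exponential sum and so cannot replace that input, though it does no harm since you only use it to identify the degree.
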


\begin{proof}Note the trivial counting gives the estimate $\leq q^{n-1}$. 
When $f$ is a polynomial, this is Prop.3.8 in \cite{Del} with $C_1 = \deg(f) -1$.
But the same proof can be applied to Laurent polynomials with slight modification. 
Really, from (3.5.2) in \textit{loc.cit.} with $X_0 = \FP^1$, we obtain
the desired estimate when $f$ is a Laurent polynomial in one variable:
we can take $C_1 = \nu_0(f) + \nu_{\infty}(f)$ where $\nu_z(f)$ denotes
the order of pole at $z$ and we put $\nu_z(f) = 0$ if $f$ is regular at $z$.
(Let us call this integer $C_1 = C_1(f)$ the \textit{width} of the 
one variable Laurent polynomial $f$.)

In general, suppose $n-1 \geq 2$ and $f$ satisfies (H) with $i=1$. Then
for generic $q$, the reduction mod $q$ of $f(x_1,p_2,\cdots, p_{n-1})$ is
a nonconstant Laurent polynomial in $x_1$ for any 
$p_2,\cdots, p_{n-1} \in (\FZ/q\FZ)^*$. Clearly, the exponential sum
of this one variable Laurent polynomial
can be bounded by $\sqrt{q}$ times a constant $C_1$ which depends only on $f$.
Hence we have
\begin{equation*}
\left|K(f,q)\right| 
\leq  \sum_{p_2,\cdots,p_{n-1}}\left|\sum_{p_1}\be_q(f(p_1,\cdots,p_{n-1}))\right|
\leq C_1 q^{(n-2)}q^{\frac{1}{2}}.
\end{equation*}
\end{proof}

Next, we consider the case when $q$ is a power of a prime $p$. 

\begin{prop}\label{prime power}
There exist a constant $C_2$ and integers $d > 0, D$ depending only on $f$ 
such that for any prime power $q$ relatively prime to $D$, we have
\begin{equation*}
\left|K(f,q)\right|\leq C_2 q^{(n-1) - \frac{1}{3d}}.
\end{equation*}
\end{prop}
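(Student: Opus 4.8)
The plan is to estimate the exponential sum $K(f,q)$ for a prime power $q = p^m$ by reducing to the prime case (Prop.\ref{prop:6.4}) via $p$-adic analysis. The key tool will be a stationary phase / Hensel-lifting argument: for $m$ large, the exponential sum over $(\FZ/p^m\FZ)^*$-valued arguments localizes to contributions from critical points of the phase $f$ modulo successively higher powers of $p$. Since $f$ is a Laurent polynomial, after clearing denominators and multiplying by a suitable power of $p$, one controls the sum by counting solutions of the critical-point equations $\partial f/\partial x_i \equiv 0$ in the torus $(\FZ/p^k\FZ)^*$.

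\textbf{First I would set up the $p$-adic framework.} Because $f \in \FZ[x_1^{\pm},\ldots,x_{n-1}^{\pm}]$, fix $D$ to be a positive integer (depending only on $f$) large enough that $f$ and its formal partial derivatives have $p$-integral coefficients and good reduction for all $p \nmid D$; this is where the constants $d$ and $D$ in the statement enter. For such $p$, the crucial estimate comes from the theory of exponential sums over finite rings: for a Laurent polynomial whose reduction is suitably nondegenerate, one has bounds of Igusa/Denef-Loeser type controlled by the $p$-adic oscillation integral
\begin{equation*}
Z(s) = \int_{(\FZ_p^{\times})^{n-1}} |f(x)|_p^s \, |dx| ,
\end{equation*}
whose poles govern the growth of $K(f,p^m)$ in $m$. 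The exponent $\tfrac{1}{3d}$ strongly suggests that $d$ is the order of the largest relevant pole of $Z(s)$ (or equivalently an invariant of the Newton polyhedron of $f$), and the factor $3$ is the slack one loses in passing from the archimedean-optimal bound to a crude but uniform one.

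\textbf{The main obstacle} will be establishing the uniform bound $|K(f,p^m)| \le C_2\, (p^m)^{(n-1) - \frac{1}{3d}}$ with constants independent of both $p$ and $m$ simultaneously. Handling a single prime power is routine, but controlling the dependence on $m \to \infty$ requires knowing that the poles of the local zeta function do not accumulate and that the residues stay bounded as $p$ varies. I expect the cleanest route is to invoke condition (H): since for some variable $x_i$ the top-degree coefficient of $f$ is a monomial, one may perform the summation over $p_i$ first by an exact $p$-adic stationary-phase evaluation (the phase is, after a change of variables dictated by (H), essentially a nondegenerate polynomial in $x_i$), gaining a factor $p^{-1/(3d)}$ per prime-power modulus, and then bound the remaining $(n-2)$-fold sum trivially by $(p^m)^{n-2}$. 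Assembling these and absorbing the finitely many bad primes (those dividing $D$) into the constant $C_2$ completes the estimate.
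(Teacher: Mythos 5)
Your concrete plan---single out the variable given by condition (H), apply one-variable stationary phase to the sum over that variable modulo the prime power, and bound the remaining $(n-2)$-fold sum trivially---is the same skeleton as the paper's proof, which implements it via Lemmas 12.2 and 12.3 of Iwaniec--Kowalski. But the quantitative heart of the argument is missing from your proposal, and the step you substitute for it would fail. After stationary phase for $q=p^{2\beta}$ (resp.\ $q=p^{2\beta+1}$), the inner sum over $p_1$ runs only over the critical set $A_1=\{\,p_1\in(\FZ/p^{\beta}\FZ)^*\;:\;\partial_1 f(p_1,p_2,\ldots,p_{n-1})\equiv 0 \bmod p^{\beta}\,\}$, weighted by $p^{\beta}$ (resp.\ by $p^{\beta}$ times a quadratic Gauss-type sum of size at most $p$); everything reduces to bounding $|A_1|$. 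Your parenthetical assumption that the phase is ``essentially a nondegenerate polynomial in $x_i$'' is exactly what cannot be assumed: condition (H) together with $p\nmid D$ only guarantees that $\partial_1 f$ reduces mod $p$ to a \emph{nonzero} one-variable Laurent polynomial of bounded width; its roots mod $p$ may be multiple, Hensel lifting is then not unique, and a root of multiplicity $l$ can contribute as many as $p^{\beta-\lceil \beta/l\rceil}$ elements to $A_1$. The paper's Appendix A exists precisely to handle this and gives $|A_1|\le d\,p^{\beta-\lceil\beta/d\rceil}$, where $d$ is the elementary ``width'' of $\partial_1 f$ in $x_1$ (order of pole at $0$ plus order of pole at $\infty$)---not a pole of an Igusa zeta function nor a Newton-polyhedron invariant. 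This counting is what produces the exponents: $(n-1)-\frac{1}{2d}$ for even powers and $(n-1)-\frac{1}{3d}$ for odd powers, the $3$ coming from $3\beta\ge 2\beta+1$, not from slack against an archimedean-optimal bound. Without this lemma (or an equivalent), your claimed gain of $p^{-1/(3d)}$ is an assertion of the conclusion rather than a proof, and insisting on nondegenerate critical points imposes a hypothesis the polynomials $f_{\br}$ need not satisfy.

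Two further problems. The Igusa/Denef--Loeser route you sketch as motivation would require uniformity of the local bounds in both $p$ and $m$ together with a Newton-polyhedron nondegeneracy hypothesis---precisely the condition the paper introduces (H) to avoid, since it is described as highly nontrivial to verify in higher dimension. And your final sentence, absorbing the primes dividing $D$ into $C_2$, cannot work: for a fixed bad prime $p$ there are infinitely many moduli $p^m$, and no constant $C_2$ makes the trivial bound $q^{n-1}$ lie below $C_2\,q^{(n-1)-\frac{1}{3d}}$ as $m\to\infty$. This is why the statement restricts to $q$ prime to $D$, and why the bad primes are dealt with later (in the proof of Prop.\ref{equidistribution}) by the multiplicative correction factor $\chi_D(q)^{1/3d}$ rather than by enlarging a constant.
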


\begin{proof}
Let $q = p^{\alpha}$ with $p$ a prime. We assume $\alpha \geq 2$ 
since the other case is treated in the last proposition.
As in the proof of it, we assume $f$ satisfies (H) with $i=1$.
First, suppose $\alpha = 2\beta$ is even. 
Then by applying Lemma 12.2 of \cite{IK} to the one variable
Laurent polynomial $f(x_1,p_2,\cdots,p_{n-1})$, we have
\begin{equation}\label{even-power}
K(f,p^{2\beta}) = \sum_{p_2,\cdots,p_{n-1}} 
p^{\beta}\sum_{p_1} \be_{p^{2\beta}}(f(p_1,p_2,\cdots,p_{n-1})) ,
\end{equation}
where the first summation is over $p_2,\cdots, p_{n-1} \in (\FZ/q\FZ)^*$
and the second summation is over the set  
$A_1$ of $p_1 \in (\FZ/p^{\beta}\FZ)^*$ with
$\partial_1 f(p_1,p_2,\cdots, p_{n-1}) \equiv 0 \mod p^{\beta}$. 
($\partial_1$ denotes the partial derivative w.r.t. the variable $x_1$.)
Let $d$ be the width with respect to $x_1$ of the Laurent polynomial 
$\partial_1 f(x_1,x_2,\cdots, x_{n-1})$.
(The width of a Laurent polynomial was defined in the proof of last proposition.)
By Cor.\ref{cor-app-cong} in Appendix \ref{app-cong}, we have
\begin{equation*}
|K(f,p^{2\beta})| \leq q^{n-2} p^{\beta} |A_1|  
\leq  d q^{(n-2)} p^{2\beta -\frac{\beta}{d}} = d q^{(n-1) - \frac{1}{2d}} \; .
\end{equation*}
To apply the corollary, the coefficient $D$ of the highest degree (with respect to $x_1$) term
in $\partial_1 f(x_1,x_2,\cdots, x_{n-1})$ should be relatively prime to $p$.
This excludes a finite number of primes dividing $D$.

The case when  $q= p^{2\beta+1}$ with $\beta \geq 1$ can be treated similarly.
We have by Lemma 12.3 of \textit{loc.cit.} (with $\bp$ denoting $(p_1,p_2,\cdots, p_{n-1})$),
\begin{equation}\label{odd-power}
K(f,p^{2\beta+1}) = 
\sum_{p_2,\cdots, p_{n-1}}p^\beta \sum_{p_1}\be_{p^{2\beta+1}}
\left(f(\bp)\right)G_p(\bp),
\end{equation}
where the first summation is over $p_2,\cdots, p_{n-1} \in (\FZ/q\FZ)^*$
and the second summation is over the same subset $A_1$ of $(\FZ/p^{\beta}\FZ)^*$ as above. 
And we have put
\begin{equation*}
G_p(\bp)= \sum_{y\in \FZ/p\FZ}\be_p\left(d(\bp)y^2+h(\bp)p^{-\beta}y\right)
\end{equation*}
with $d(\bp) =\partial_1^2 f(\bp)/2$ and  $h(\bp)=\partial_1 f(\bp)$.
Since $ |G_p(x)|\leq p$, we obtain 
\begin{equation*}
|K(f,p^{2\beta+1})| \leq q^{n-2} p^{\beta + 1} |A_1|  
\leq  d q^{(n-2)} p^{2\beta + 1  -\frac{\beta}{d}} \leq d q^{(n-1) - \frac{1}{3d}} \; 
\end{equation*}
since $3\beta \geq \alpha = 2\beta + 1$.
This completes the proof with $C_2 = d$ when $\alpha \geq 2$.
\end{proof}

Let us consider the case when  $q$ has several prime factors.
We have the following effect of the Chinese remainder theorem for the exponential sums.
\begin{lem}\label{pro2} 
Let $f$ be an integer coefficient Laurent polynomial 
and $q_1, q_2>1$ be relatively prime integers. Then we have 
$$
K(f,q_1q_2)=K(f,q_1)K(f,q_2).
$$
\end{lem}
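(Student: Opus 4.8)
The plan is to deduce the lemma from the Chinese Remainder Theorem applied at the level of the additive characters $\be_q$. Write $q=q_1q_2$ with $(q_1,q_2)=1$ and fix integers $a_1,a_2$ with $a_1q_2\equiv 1 \pmod{q_1}$ and $a_2q_1\equiv 1\pmod{q_2}$. The ring isomorphism $\FZ/q\FZ \xrightarrow{\ \sim\ } \FZ/q_1\FZ\times\FZ/q_2\FZ$ restricts to a bijection of unit groups, hence to a bijection between the index set of $K(f,q)$ and the product of the index sets of $K(f,q_1)$ and $K(f,q_2)$: an $(n-1)$-tuple $\bp$ of units modulo $q$ corresponds to the pair $(\bp',\bp'')$ of its reductions modulo $q_1$ and $q_2$.

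First I would verify that the Laurent polynomial $f$ respects this decomposition. Let $e_1,e_2\in\FZ/q\FZ$ be the orthogonal idempotents reducing to $(1,0)$ and $(0,1)$, so that $e_1+e_2\equiv 1$ and $e_1e_2\equiv 0\pmod q$. Each coordinate satisfies $p_i\equiv p_i'e_1+p_i''e_2$, and, since reduction is compatible with taking multiplicative inverses, the same holds for $p_i^{-1}$. Using $e_k^2\equiv e_k$ and $e_1e_2\equiv 0$, every Laurent monomial, and hence $f$ (which has integer coefficients), satisfies
\begin{equation*}
f(\bp)\equiv f(\bp')\,e_1 + f(\bp'')\,e_2 \pmod q .
\end{equation*}
This is the step that drives the argument, and it is where the idempotent identities are used.

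Next I would split the character and factor the sum. From the partial-fraction congruence $\tfrac1q\equiv\tfrac{a_1}{q_1}+\tfrac{a_2}{q_2}$ one gets $\be_q(e_1y)=\be_{q_1}(a_1y)$ and $\be_q(e_2y)=\be_{q_2}(a_2y)$, so the displayed congruence yields
\begin{equation*}
\be_q\big(f(\bp)\big)=\be_{q_1}\big(a_1 f(\bp')\big)\,\be_{q_2}\big(a_2 f(\bp'')\big).
\end{equation*}
Summing over the bijection $\bp\leftrightarrow(\bp',\bp'')$ then separates the variables and expresses $K(f,q)$ as the product of $\sum_{\bp'}\be_{q_1}(a_1 f(\bp'))$ and $\sum_{\bp''}\be_{q_2}(a_2 f(\bp''))$.

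The main obstacle is the presence of the unit twists $a_1,a_2$ forced by the partial-fraction step: a priori the two factors are the twisted sums $K(a_1f,q_1)$ and $K(a_2f,q_2)$ rather than $K(f,q_1)$ and $K(f,q_2)$. The plan to dispose of them is a change of variables $p_i'\mapsto c_ip_i'$ by suitable units $c_i$ modulo $q_1$ (a bijection of the summation set) chosen so that the relabeling carries $a_1f$ back to $f$, and similarly for the second factor; checking that such $c_i$ exist is the delicate point, as it requires the monomials of $f$ to rescale coherently. I would note that for the use made of the lemma in the proof of Theorem~\ref{thm-equidist} only multiplicativity of the absolute values is needed, and the twisted factors obey exactly the estimates of Prop.~\ref{prop:6.4} and Prop.~\ref{prime power} (which depend only on the Newton-polytope data of $f$, unchanged under multiplication by a unit), so the reduction to prime powers goes through in any case.
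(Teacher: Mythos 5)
Your CRT computation is exactly the ``Fubini'' argument the paper has in mind (its entire proof of Lemma \ref{pro2} is the sentence ``This is an easy consequence of Fubini theorem''), but your caution about the unit twists is not a defect of your write-up --- it is a genuine flaw in the lemma itself. What the CRT argument proves is the twisted identity $K(f,q_1q_2)=K(a_1f,q_1)\,K(a_2f,q_2)$ with $a_1q_2\equiv 1\pmod{q_1}$ and $a_2q_1\equiv 1\pmod{q_2}$, and the rescaling $p_i\mapsto c_ip_i$ that you rightly call delicate genuinely fails to exist in general. Concretely, take $f(x)=x^2$, $q_1=3$, $q_2=5$: then $K(f,3)=2e^{2\pi i/3}$ and $K(f,5)=4\cos(2\pi/5)$, so $\left|K(f,3)K(f,5)\right|=8\cos(2\pi/5)\approx 2.47$, while $K(f,15)=4\left(e^{2\pi i/15}+e^{8\pi i/15}\right)$ has $\left|K(f,15)\right|=8\cos(\pi/5)\approx 6.47$. (Killing the twist here would require a unit $c$ modulo $3$ with $c^2\equiv 2\pmod 3$, and $2$ is not a square modulo $3$.) So the lemma as stated is false, and the paper's one-line proof glosses over precisely the point you flagged; the twisted identity, which your idempotent calculation establishes, is the correct statement.

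Your fallback observation is also the correct repair, and it is all the paper actually needs. In the proof of Prop.\ref{bound} one factors $q$ into prime powers and applies multiplicativity repeatedly, so with the twisted identity each factor becomes $K(cf,p^{\alpha})$ for some unit $c$ modulo $p^{\alpha}$; the estimates of Prop.\ref{prop:6.4} and Prop.\ref{prime power} hold for $cf$ with the same constants as for $f$, since the width of $cf$ and of $\partial_1(cf)=c\,\partial_1 f$ agree with those of $f$ and $\partial_1 f$, condition (H) is preserved, the solution count $|A_1|$ is unchanged because $(c,p)=1$, and the excluded primes are those dividing the leading coefficient $D$ of $\partial_1 f$, likewise unchanged. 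Hence Prop.\ref{bound}, Prop.\ref{final estimate} and Thm.\ref{thm-equidist} go through verbatim once the lemma is replaced by its twisted form --- so your proposal, unlike the paper's proof, is both correct and sufficient for the application.
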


\begin{proof}
This is an easy consequence of Fubini theorem.
\end{proof}

From the lemma, the following proposition follows immediately. 

\begin{prop}\label{bound}
Let $C_2, d$ and $D$ be as in Prop.\ref{prime power}.
Then for any integer $q > 1$ relatively prime to $D$
\begin{equation*}
\left|K(f,q)\right| \leq \left(C_2\right)^{\omega(q)} q^{(n-1)-\frac{1}{3d}},
\end{equation*}
where $\omega(q)$ is the number of prime factors of $q$.
\end{prop}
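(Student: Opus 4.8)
The plan is to combine the multiplicativity established in Lemma \ref{pro2} with the prime-power estimate of Proposition \ref{prime power}. Write the prime factorization $q = q_1 q_2 \cdots q_{\omega(q)}$ where each $q_\ell = p_\ell^{\alpha_\ell}$ is the exact power of a distinct prime $p_\ell$ dividing $q$. Since these prime-power factors are pairwise relatively prime, repeated application of Lemma \ref{pro2} gives the factorization
\begin{equation*}
K(f,q) = \prod_{\ell=1}^{\omega(q)} K(f,q_\ell) .
\end{equation*}
This reduces the bound for general $q$ to a product of the bounds already obtained for each prime power.

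Next I would apply Proposition \ref{prime power} to each factor $K(f,q_\ell)$. Because $q$ is relatively prime to $D$, each prime power $q_\ell$ is also relatively prime to $D$, so the hypothesis of Proposition \ref{prime power} is met for every factor. Taking absolute values and using the triangle-inequality-free product formula above yields
\begin{equation*}
\left| K(f,q) \right| = \prod_{\ell=1}^{\omega(q)} \left| K(f,q_\ell) \right|
\leq \prod_{\ell=1}^{\omega(q)} C_2\, q_\ell^{(n-1) - \frac{1}{3d}} .
\end{equation*}
The constant $C_2$ contributes one factor per prime, producing $\left(C_2\right)^{\omega(q)}$, and the remaining factors multiply to $\prod_\ell q_\ell^{(n-1)-\frac{1}{3d}}$.

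Finally I would observe that the exponents combine multiplicatively across the coprime factorization: since $q = \prod_\ell q_\ell$, we have $\prod_\ell q_\ell^{(n-1)-\frac{1}{3d}} = q^{(n-1)-\frac{1}{3d}}$ because the exponent $(n-1) - \frac{1}{3d}$ is the same for every factor and exponentiation respects the product $\prod_\ell q_\ell = q$. Assembling these pieces gives exactly
\begin{equation*}
\left| K(f,q) \right| \leq \left(C_2\right)^{\omega(q)} q^{(n-1)-\frac{1}{3d}},
\end{equation*}
as claimed. There is essentially no obstacle here: the proposition is a formal corollary, and the only point requiring minor care is confirming that each prime-power factor inherits coprimality to $D$ from $q$, which is immediate. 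The substantive analytic content lives entirely in Proposition \ref{prime power} and in the purity-type input behind Proposition \ref{prop:6.4}; this step is purely bookkeeping via the Chinese remainder theorem.
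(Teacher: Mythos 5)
Your proof is correct and follows exactly the paper's route: the paper states that the proposition "follows immediately" from Lemma \ref{pro2} (multiplicativity via the Chinese remainder theorem) combined with Proposition \ref{prime power}, which is precisely the factorization-plus-prime-power-bound argument you spell out. The only detail worth noting is that each prime-power factor exceeds $1$, so the repeated application of Lemma \ref{pro2} is legitimate, and you have correctly observed that coprimality to $D$ passes to each factor.
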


Note that $\omega(q)$   has a well-known estimate
\begin{equation*}
\omega(q)\sim \log\log q.
\end{equation*}
For sufficiently large $q$, we have that
$$
C_2^{\omega(q)}\leq  C_2^{c \log \log q}\leq (\log q)^{c \log C_2}.
$$
Thus, we obtain that  for any $\epsilon>0$,
$$
C_2^{\omega(q)} \ll q^{\epsilon}.
$$
Therefore, we have the following bound:

\begin{prop}\label{final estimate}
Let $d$ and $D$ be as in Prop.\ref{prime power}. 
Then for any $\epsilon > 0$ and any integer $q > 1$ relatively prime to $D$, 
we have
$$
\left|K(f, q)\right|\ll q^{(n-1) -\frac{1}{3d} + \epsilon}
$$
\end{prop}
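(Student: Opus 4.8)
The plan is to derive this purely formally from Proposition \ref{bound}, which already carries all the arithmetic content; the only remaining work is to absorb the factor $C_2^{\omega(q)}$ into an arbitrarily small power of $q$. First I would invoke Proposition \ref{bound}: for every integer $q > 1$ relatively prime to $D$,
$$
|K(f,q)| \;\leq\; C_2^{\omega(q)}\, q^{(n-1)-\frac{1}{3d}} ,
$$
so it suffices to show that $C_2^{\omega(q)} \ll_{\epsilon} q^{\epsilon}$ for each fixed $\epsilon > 0$; multiplying the two bounds then yields the claimed estimate, with the finitely many small $q$ swept into the implied constant.

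The second step is to control $C_2^{\omega(q)}$. Writing $C_2^{\omega(q)} = \exp(\omega(q)\log C_2)$, the desired inequality amounts to $\omega(q)\log C_2 \leq \epsilon \log q$ for all sufficiently large $q$. As in the discussion preceding the statement, once $q$ is large one has $C_2^{\omega(q)} \leq (\log q)^{c\log C_2}$, a \emph{fixed} power of $\log q$; since any fixed power of $\log q$ is $o(q^{\epsilon})$ for every $\epsilon > 0$, we conclude $C_2^{\omega(q)} = q^{o(1)} \ll_{\epsilon} q^{\epsilon}$. Combining this with the display above gives
$$
|K(f,q)| \;\ll\; q^{(n-1)-\frac{1}{3d}+\epsilon} ,
$$
which is the assertion.

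The main point requiring care is not analytic but bookkeeping about $\omega(q)$: the bound must hold \emph{uniformly} over all $q$ coprime to $D$, not merely on average. The estimate $\omega(q)\sim\log\log q$ cited above is the normal (average) order and fails for, e.g., products of the first several primes, so the rigorous input is the maximal order $\omega(q) \ll \log q/\log\log q$. Either way the conclusion is robust: from $\omega(q)\log C_2 = o(\log q)$ one gets $C_2^{\omega(q)} = q^{o(1)}$, and this is all that is needed. Thus the proposition is a routine packaging of Proposition \ref{bound} together with an elementary prime-factor count, and I expect no substantive obstacle beyond flagging this uniformity.
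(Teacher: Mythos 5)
Your proposal is correct and follows essentially the same route as the paper: invoke Prop.\ref{bound} and absorb the factor $C_2^{\omega(q)}$ into an arbitrarily small power of $q$. Your caveat about uniformity is well taken and, if anything, makes the argument more careful than the paper's own prose, which cites the normal order $\omega(q)\sim\log\log q$ (false uniformly, e.g.\ for primorials); the correct uniform input is the maximal order $\omega(q)\ll \log q/\log\log q$, which still yields $C_2^{\omega(q)}=q^{o(1)}$ and hence the stated bound.
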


For $x>1$ let $\phi(x):=|(\FZ/[x]\FZ)^*|$ be the Euler's phi function. 
\begin{prop}
For any $\epsilon > 0$, we have
$$|I_n(x)|=\sum_{q<x} \phi(q)^{n-1}\gg x^{n -\epsilon}.$$ 
\end{prop}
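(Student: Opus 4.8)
The plan is to first make the count completely explicit and then reduce the whole statement to a single lower bound for Euler's totient function. For a fixed modulus $q$, an admissible tuple $(p_1,\cdots,p_{n-1})$ is one whose every coordinate lies in $\{1,\cdots,q-1\}$ and is coprime to $q$; as the $n-1$ coordinates are chosen independently, there are exactly $\phi(q)^{n-1}$ of them. Summing over $q < x$ yields the stated identity $|I_n(x)| = \sum_{q<x}\phi(q)^{n-1}$, so the proposition is really an estimate on a weighted sum of totient powers.

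The only nontrivial input I would invoke is the classical lower bound $\phi(q) \gg q/\log\log q$ (equivalently $\liminf_{q\to\infty}\phi(q)\log\log q/q = e^{-\gamma}$). Since $\log\log q = o(q^{\delta})$ for every $\delta > 0$, this gives, for each fixed $\delta > 0$, a bound $\phi(q) \gg_{\delta} q^{1-\delta}$ valid for all $q \ge 2$ with an implied constant depending only on $\delta$. Everything after this point is bookkeeping.

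To extract the lower bound I would discard all but a dyadic tail of the sum. Restricting to $q \in [x/2, x)$ leaves of order $x$ terms, each satisfying $\phi(q)^{n-1} \gg_{\delta} q^{(n-1)(1-\delta)} \gg_{\delta} x^{(n-1)(1-\delta)}$ (the factor $(1/2)^{(n-1)(1-\delta)} \ge (1/2)^{n-1}$ being absorbed into the implied constant). Hence
\begin{equation*}
|I_n(x)| \ge \sum_{x/2 \le q < x} \phi(q)^{n-1} \gg_{\delta} x \cdot x^{(n-1)(1-\delta)} = x^{\,n - (n-1)\delta}.
\end{equation*}
Given $\epsilon > 0$, choosing $\delta = \epsilon/(n-1)$ makes the exponent exactly $n-\epsilon$, which is the claim.

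I do not expect a genuine obstacle here: the sole delicate point is keeping track of the dependence of the implied constants on $\delta$ (hence on $\epsilon$), which is harmless since $\epsilon$ is fixed throughout. If one prefers to sidestep the totient lower bound entirely, the sharper classical asymptotic $\sum_{q \le x}\phi(q)^{n-1} \sim c_{n-1}\, x^{n}$ with $c_{n-1} > 0$ gives the stronger two-sided conclusion $|I_n(x)| \gg x^{n}$; but this is more than is needed, and the dyadic argument above is self-contained modulo the standard estimate $\phi(q) \gg q/\log\log q$.
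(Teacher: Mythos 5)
Your proof is correct and follows essentially the same route as the paper: both rest on the classical bound $\phi(q)\gg q/\log\log q$ combined with $\log\log q=o(q^{\delta})$, differing only in trivial bookkeeping (you restrict to the dyadic tail $x/2\le q<x$ and tune $\delta=\epsilon/(n-1)$, while the paper sums $q^{(1-\epsilon)(n-1)}$ over all $q<x$). Your version is in fact slightly more careful about the exponent, since the paper's displayed chain actually yields $x^{\,n-\epsilon(n-1)}$ and silently renames the arbitrary $\epsilon$.
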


\begin{proof}
It is known that for all but finitely many positive integers $q$, 
$$\phi(q)\geq \frac{q}{e^{\gamma} \log\log q}.$$
Since for any $\epsilon > 0 $, there is a positive number $C_{\epsilon}$ such that
$$\log\log q \leq C_{\epsilon} q^{\epsilon},$$ 
we have that
\begin{equation*}
\sum_{q<x} \phi(q)^{n-1} \gg \sum_{q<x} \left(\frac{q}{e^{\gamma} \log\log q}\right)^{n-1}
\gg \sum_{q<x} q^{(1-\epsilon)(n-1)}\gg x^{n-\epsilon} .
\end{equation*}
\end{proof}

Now we come to the proof of Prop.\ref{equidistribution}. 
This will be done by combining previous estimates.
\begin{proof}[Proof of Prop.\ref{equidistribution} ]
To estimate $\sum_{0<q<x} |K(f, q)|$, we need to extend the result
of Prop.\ref{final estimate} to arbitrary integer $q >1$. 
%Let 
For $D, d$ given as in Prop.\ref{prime power},
we define a multiplicative arithmetic function $\chi_D$ by
$$
\chi_D(q) := \prod_{p | D} p^{\ord_p q}
$$
where the product is over the set of primes
dividing $D$. Since we have a trivial estimate $|K(f,q)| \leq q^{n-1}$, if we multiply
the right hand side of inequalities in Prop.\ref{prime power} and Prop.\ref{bound}-\ref{final estimate} by 
$\chi_D(q)^{1/3d}$, then the inequalities hold for any $q > 1$.
By (1.79) in \cite{IK}, for sufficiently large $x$,  we have % for sufficiently large $x$
\begin{equation*}
\sum_{q<x} \chi_D(q)^{1/3d} \leq x \prod_{p | D} \left(1-p^{-1+\frac{1}{3d}}\right)^{-1}.
\end{equation*}
%where the product is over primes dividing $D$.
By the partial summation, we have
\begin{equation*}
\sum_{q < x} q^{(n-1) -\frac{1}{3d} + \epsilon}\cdot\chi_D(q)^{1/3d} 
\ll x^{n -\frac{1}{3d} + \epsilon} \; .
\end{equation*}
From this estimate and the last two propositions, we have
\begin{equation*}
\frac{1}{|I_n(x)| } \sum_{0<q<x} |K(f, q)| \rightarrow 0,
\end{equation*}
as $x\rightarrow \infty$.
This completes the proof of Prop.\ref{equidistribution}.
\end{proof}

\section{Examples}
In the following, we present  the Laurent polynomials associated to some cases of generalized Dedekind sums for small
indices(thus including small dimension). Note $n=2$ case is throughly studied  in \cite{J-L2}. 
The cases considered here are  generalized Dedekind sums in 3-dimension(i.e. $n=3$)  and Dedekind-Zagier sums (i.e. 
$\br=(1,1,\ldots,1)$) in \cite{Zagier1}.

%%give  examples of 
%Dedekind sums which are either of
%lower dimension ($n= 3$) or of lower degree ($N = n$ i.e. $\br = (1,\cdots, 1)$).

\subsection{Three dimensional Dedekind sums}
Let $n = 3$ and $N$ be even.
\noindent\textit{Example.} Let $(r_1,r_2,r_3) = (6,4,2)$. 
We have $d_{12,3} = 2^{12}\cdot3^6\cdot 5^3\cdot 7^2\cdot 11\cdot13$. Let
\begin{gather*}
A_{1,2} = 15202p_1^{-6}p_2^{-4}, \quad A_{2,3} = 638484p_1^{6}p_2^{-4}, \quad 
A_{1,3}= 228030 p_1^{-6}p_2^{8} \\
\begin{align*}
A_{1} &= 382200 p_1^{-6}p_2^{6} + 315315p_1^{-6}p_2^{4} +143000p_1^{-6}p_2^{2} + 21021p_1^{-6}\\
A_{2} &= 573300 p_1^{4}p_2^{-4} + 189189p_1^{2}p_2^{-4}+14300p_2^{-4} \\
A_{3} &= 63063 p_1^{2} + 28600 p_2^{2} .
\end{align*}
\end{gather*}
Then $f_{\br}$ is the sum of all the Laurent polynomials above. The Laurent polynomials
supported on faces of $\Delta_{\infty}(f_{\br})$ are (minus of)
$A_{1,2}. A_{2,3}, A_{1,3}, A_{1,2}+A_{2}+A_{2,3}, A_{1,3}+A_{1}+A_{1,2}$
and $A_{2,3}+A_{3}+A_{1,3}$.

\subsection {Dedekind-Zagier sums}
The Dedekind-Zagier sum $d(q;p_1,\cdots,p_{n-1})$ of \eqref{tri}
% defined in (\ref{tri})
is related to the generalized Dedekind sum 
$s_{1,\cdots, 1}(q;p_1,\cdots,p_{n-1})$ \cite{Zagier1,Bernt}:
\begin{equation}\label{relation-s-d}
s_{1,\cdots, 1}(q;p_1,\ldots, p_{n-1}) 
= \frac{(-1)^{\frac{n}2 + 1}}{2^n q} d(q;p_1,\ldots,p_{n-1}).
\end{equation}
Recall that this sum is related to the coefficient $t_{1,\cdots,1}(q;p_1,\cdots, p_{n-1})$ 
of $x_1\cdots x_n$ in the Todd series of $C$ by Thm.\ref{relation-t-s}.
Since $(r_1,\cdots, r_n) = (1,\cdots,1)$ fixed here, 
let us drop it from the
notation and simply write $s(q;p_1,\cdots,p_{n-1})$ for $s_{1,\cdots, 1}(q;p_1,\cdots,p_{n-1})$
and $t(q;p_1,\cdots,p_{n-1})$ for  $t_{1,\cdots, 1}(q;p_1,\cdots,p_{n-1})$.
From (\ref{relation-s-d}) and Thm.\ref{congruence-s}, 
we deduce immediately the following.

\begin{prop}
Let $n \geq 2$ be even and let $d = d_{n,n}$. Then
$$
\frac{d}{2^n} d (q;p_1,\cdots,p_{n-1})\in \FZ.
$$ 
Moreover, 
\begin{equation}\label{Zagier-sum}
\frac{d}{2^n} d(q;p_1,\cdots, p_{n-1}) \equiv (-1)^{\frac{n}{2} +1} \sum_{\bm}
d \frac{B_{\bm}}{\bm !}\;\prod_{i=1}^{n-1}p_i^{m_i-1} \pmod{q}
\end{equation}
where the summation is over the set of $n$-tuples 
$\bm = (m_1,\cdots, m_n)$ of non-negative even integers
with $\sum_{i=1}^n m_i = n$.
\end{prop}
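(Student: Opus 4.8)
The plan is to specialize Theorem~\ref{congruence-s} to the index $\br = \bone = (1,\ldots,1)$ and then feed in the relation~\eqref{relation-s-d} between $s_{1,\ldots,1}(q;p_1,\ldots,p_{n-1})$ and the Dedekind-Zagier sum. For $\br = \bone$ we have $N = |\br| = n$, so $d = d_{N,n} = d_{n,n}$, $\br! = 1$, and $q^{N-n+1} = q$. Moreover every binomial coefficient $\binom{m_i-1}{r_i-1} = \binom{m_i-1}{0}$ equals $1$, so \eqref{eq-congruence-s} collapses to
\begin{equation*}
dq\, s_{1,\ldots,1}(q;p_1,\ldots,p_{n-1}) \;\equiv\; \sum_{\bm}(-d)\frac{B_{\bm}}{\bm!}\prod_{i=1}^{n} p_i^{m_i-1} \pmod{q},
\end{equation*}
the sum running over even $n$-tuples $\bm$ with $|\bm| = n$ and at least one vanishing coordinate.

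The next step is to reconcile the summation range with that in the proposition and to absorb the $i=n$ factor. First I would observe that the condition ``at least one coordinate is zero'' is automatic here: if all $m_i$ were nonzero they would each be even, hence $\geq 2$, forcing $|\bm| \geq 2n > n$, a contradiction; thus the index set is literally the set of even $\bm$ with $|\bm| = n$ appearing in \eqref{Zagier-sum}. Next, using the convention $p_n = -1$ together with the fact that $m_n$ is even, the last factor is $p_n^{m_n-1} = (-1)^{m_n-1} = -1$, so that $\prod_{i=1}^n p_i^{m_i-1} = -\prod_{i=1}^{n-1}p_i^{m_i-1}$. This converts each summand $(-d)\tfrac{B_{\bm}}{\bm!}\prod_{i=1}^n p_i^{m_i-1}$ into $d\,\tfrac{B_{\bm}}{\bm!}\prod_{i=1}^{n-1}p_i^{m_i-1}$.

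Finally I would invoke \eqref{relation-s-d}. Writing $dq\, s_{1,\ldots,1} = \tfrac{(-1)^{n/2+1}d}{2^n}\, d(q;p_1,\ldots,p_{n-1})$, the integrality assertion $\tfrac{d}{2^n}d(q;\ldots)\in\FZ$ is immediate from the integrality half of Theorem~\ref{congruence-s}, since $(-1)^{n/2+1} = \pm 1$. Multiplying the displayed congruence through by $(-1)^{n/2+1}$ and using that $n$ is even, so that $\bigl((-1)^{n/2+1}\bigr)^2 = (-1)^{n+2} = 1$, produces exactly \eqref{Zagier-sum}.

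The argument is essentially bookkeeping, and I do not anticipate a genuine obstacle. The only point demanding care is the interplay of signs, namely the single $-1$ arising from $p_n^{m_n-1}$ together with the two occurrences of the factor $(-1)^{n/2+1}$, and the verification via the automatic-zero observation that the summation range of Theorem~\ref{congruence-s} coincides with the one stated in the proposition.
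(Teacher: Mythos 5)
Your proposal is correct and follows exactly the paper's route: the paper simply states that the proposition is deduced ``immediately'' from \eqref{relation-s-d} and Thm.\ref{congruence-s}, and your specialization to $\br=\bone$ (so $N=n$, $\br!=1$, all binomials equal to $1$), the observation that the ``at least one coordinate zero'' condition is automatic when $|\bm|=n$, and the sign bookkeeping from $p_n^{m_n-1}=-1$ supply precisely the details left implicit. Nothing is missing.
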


\noindent\textbf{Remark.} 
Let $\td_{\ev}^N(x_1,\cdots,x_k)$ be the totally even part of $N$-th Todd polynomial 
in $k$ variables (i.e. sum of terms which are of even degree in each variable). 
Then the above equation can be written as
\begin{equation*}
\frac{d}{2^n} d (q;p_1,\cdots,p_{n-1}) \equiv
 (-1)^{\frac{n}{2} +1}d 
 \frac{\td_{\ev}^n(p_1,\cdots, p_{n-1},1)}{p_1 \cdots p_{n-1}} \pmod{q}.
\end{equation*}

Note that the result on the denominator of $d(q;p_1,\cdots,p_{n-1})$ here is not sharp. %For example,
A more precise result is %on the denominator of $d (q;p_1,\cdots,p_{n-1})$ is 
given
in \cite{Zagier1}:
$d_n = d_{n,n}$ for small $n$ can be: 
$$d_2=2^2 \cdot 3, d_4 = 2^4 \cdot 3^2 \cdot 5 ,
d_6 = 2^6 \cdot 3^3 \cdot 5 \cdot 7 ,d_8 = 2^8 \cdot 3^4\cdot 5^2 \cdot 7 ,
d_{10} = 2^{10} \cdot 3^5 \cdot 5^2 \cdot 7 \cdot 11$$

\noindent\textit{Example.} 
For $n = 4$, $d_{n,n} = 720$ and we have
\begin{equation*}
p_1 p_2 p_3 \cdot f(p_1,p_2,p_3) =  p_1^4 + p_2^4 + p_3^4 
- 5 p_1^2p_2^2 - 5 p_2^2 p_3^2 - 5p_3^2 p_1^2 - 5 p_1^2 - 5p_2^2 - 5p_3^2 + 1.
\end{equation*}

\appendix
\section{Number of congruence solutions modulo a prime power}\label{app-cong}
In this appendix, we prove a simple estimate of the number of solutions
of a polynomial congruence equation modulo a prime power.

\begin{prop}
Let $f \in \FZ[x]$ be an polynomial of degree $d > 0$ and let $p$ be a prime
which does not divide the coefficient of $x^d$. 
If $r \in \FZ/p\FZ$ is a root of $f(x) \equiv 0 \mod p$ of multiplicity $m$, then
for $n \geq 2$ we have
\begin{equation*}
\left|\left\{ z \in  \FZ/p^n\FZ \; | \; f(z) \equiv 0 \negthickspace\mod p^n , \;
 z \equiv r \negthickspace\mod p \right\} \right| \; \leq \;
p^{n - \lceil \frac{n}{m}\rceil} \; ,
\end{equation*}
where $\lceil a \rceil$ denotes the smallest integer greater than or equal to $a$.
\end{prop}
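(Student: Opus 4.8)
The plan is to reduce the count to the local behaviour of $f$ at the single root $r$, and then to translate the divisibility condition into a statement about $p$-adic valuations of the differences $z-\rho_i$, where the $\rho_i$ are the roots of $f$ lying over $r$.

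First I would isolate the part of $f$ responsible for $r$. Since $r$ is a root of $\bar f = f \bmod p$ of multiplicity $m$, we may write $\bar f = (x-r)^m \bar g$ with $\bar g(r)\neq 0$; as $(x-r)^m$ and $\bar g$ are coprime, Hensel's lemma lifts this to a factorization $f = f_1 f_2$ over $\FZ_p$ with $f_1$ monic of degree $m$, $f_1\equiv (x-r)^m \bmod p$, and $f_2\equiv \bar g \bmod p$. For every $z\equiv r \bmod p$ one has $f_2(z)\equiv f_2(r)\equiv \bar g(r)\neq 0 \bmod p$, so $f_2(z)$ is a $p$-adic unit and $\ord_p f(z)=\ord_p f_1(z)$. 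Hence the set to be counted is exactly $\{\,z \bmod p^n : z\equiv r \bmod p,\ \ord_p f_1(z)\ge n\,\}$, and after translating $x\mapsto x+r$ we may assume $r=0$ and $f=f_1$ monic of degree $m$ with $f\equiv x^m \bmod p$.

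Next I would pass to roots. Over the ring of integers of a splitting field of $f_1$, write $f_1(x)=\prod_{i=1}^m (x-\rho_i)$; since $f_1\equiv x^m \bmod p$, all the $\rho_i$ have $v(\rho_i)>0$, where $v$ is the extension of $\ord_p$ normalised by $v(p)=1$. Then for $z\in\FZ_p$ one has $\ord_p f_1(z)=\sum_{i=1}^m v(z-\rho_i)$, so the condition becomes $\sum_{i=1}^m v(z-\rho_i)\ge n$. If this holds, then by pigeonhole $v(z-\rho_i)\ge n/m$ for at least one $i$, i.e. $z$ lies in the ultrametric ball $B_i=\{\,w:v(w-\rho_i)\ge n/m\,\}$. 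By the ultrametric inequality any two integers $z,z'$ in a common ball satisfy $v(z-z')\ge n/m$, hence $\ord_p(z-z')\ge \lceil n/m\rceil$; consequently the integer points of each $B_i$ lie in a single class modulo $p^{\lceil n/m\rceil}$ and so meet at most $p^{\,n-\lceil n/m\rceil}$ classes modulo $p^n$. This already yields the desired exponent for the contribution of a single ball.

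The step I expect to be the main obstacle is passing from one ball to all of them without picking up a multiplicative constant: a priori the solutions are spread over $B_1,\dots,B_m$, and the crude union bound only gives $m\,p^{\,n-\lceil n/m\rceil}$. To sharpen this one must exploit the cluster structure of the roots — in the ultrametric the $B_i$ are pairwise equal or disjoint, and for $z$ in the ball attached to a cluster of $m'$ roots the remaining $m-m'$ factors contribute a fixed valuation, so the number of admissible $z$ in that ball should be governed by $m'$ rather than $m$. The plan is therefore to induct on $m$ through this cluster decomposition, checking that the exponents combine correctly; this bookkeeping with the Newton-polygon slopes of $f_1$, rather than any single estimate, is the delicate part of the argument, and it is exactly here that the integrality of $\ord_p(z-z')$ must be used to convert the bound $n/m$ into the ceiling $\lceil n/m\rceil$.
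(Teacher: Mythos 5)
Your first two steps (the Hensel factorization $f = f_1 f_2$ isolating the root, and the passage to roots, pigeonhole, and ultrametric balls) reproduce the paper's own argument almost verbatim --- the paper merely organizes the roots of $f_1$ into Galois orbits, i.e.\ irreducible factors over $\FQ_p$, so that its balls are indexed by the $k \leq m$ irreducible factors rather than by the $m$ individual roots. But the obstacle you flag at the end is not unfinished bookkeeping that an induction on clusters will resolve: it is insurmountable, because the stated bound is simply false whenever the roots over $r$ form more than one cluster. Take $f(x) = x(x-p)$, $r = 0$, $m = 2$, $n = 3$: writing $z = pw$, the condition $f(z) \equiv 0 \bmod p^3$ reads $w(w-1) \equiv 0 \bmod p$, which has $2p$ solutions $w$ in $\FZ/p^2\FZ$, hence $2p$ solutions $z$ in $\FZ/p^3\FZ$, exceeding the claimed bound $p^{3 - \lceil 3/2 \rceil} = p$. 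More generally, $f(x) = \prod_{j=0}^{m-1}(x - jp)$ with $n = m+1$ and $p > m$ has $m\,p^{\,n-2}$ solutions against a claimed $p^{\,n-2}$, so the factor you were trying to remove (the number of clusters, which can be as large as $m$) must appear in any true bound.

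In fact the paper's proof goes wrong at exactly the step you declined to gloss over: after the pigeonhole it asserts ``this determines $\alpha$ modulo $p^{\lceil n/m\rceil}$,'' which is valid only when $k=1$; for $k \geq 2$ the solution set is a union of up to $k$ classes modulo $p^{\lceil n/m\rceil}$, one per cluster. So your argument, terminated honestly at the union bound $m\,p^{\,n - \lceil n/m\rceil}$ (or $k\,p^{\,n - \lceil n/m\rceil}$ with the Galois grouping), is a complete proof of the correct statement, and you should present it as such rather than try to sharpen it. The damage downstream is containable: in Cor.\ref{cor-app-cong} the intermediate bound $c\,p^{\,n - \lceil n/l\rceil}$ fails for the same reason, but the final bound $d\,p^{\,n - \lceil n/d\rceil}$ remains valid, because the total number of clusters summed over all residues $r$ is at most $\deg f = d$; and only this weaker form is invoked in the proof of Prop.\ref{prime power}, so the paper's exponential-sum estimates and equidistribution theorems are unaffected.
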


\begin{proof}
When $m =1$, this is a usual version of Hensel's lemma. 
We assume $m \geq 2$. 
And by replacing $f(x)$ by $f(x+\tilde{r})$ ($\tilde{r} \in \FZ$ being a lift of $r$),
we may assume $r = 0$.
By the polynomial form of Hensel's lemma, there exists a decomposition
$f(x) = g(x)h(x)$ in $\FZ_p[x]$ lifting $f(x) = x^m \bar{h}(x)$ in $\FZ/p\FZ[x]$.
In other words, there exists polynomials $g(x), h(x) \in \FZ_p[x]$ with $f(x) = g(x)h(x)$
such that $g(x)$ is monic, relatively prime to $h(x)$ and $g(x) \equiv x^m \mod p$.

Let $g(x) = g_1(x)^{m_1}\cdots g_k(x)^{m_k}$ be the decomposition as a product
of irreducible polynomials in $\FQ_p[x]$ (we can take them in $\FZ_p[x]$) and
for each $i$, let $\alpha_{i,1},\alpha_{i,2},\cdots, \alpha_{i,d_i}$ 
be the roots of $g_i(x)$ in an algebraic closure $\overline{\FQ_p}$ 
of $\FQ_p$ ($d_i = \deg \; g_i$).
Recall that the nonarchimedean norm $|\;|_p$ on $\FQ_p$ is canonically
extended to $\overline{\FQ_p}$ and the same is true for $\ord_p = - \log_p | \cdot |_p$.
For $\alpha \in \FZ_p$ with $\alpha \equiv 0 \mod p$,
we have $\left|h(\alpha)\right|_p = 1$ and 
\begin{equation*}
\left|f(\alpha)\right|_p  = \prod_{i=1}^k 
\left| (\alpha-\alpha_{i,1})\cdots(\alpha-\alpha_{i,d_i})\right|_p^{m_i} 
= \prod_{i=1}^k | \alpha - \alpha_i |_p^{d_i m_i} \; ,
\end{equation*}
where we have put $\alpha_i = \alpha_{i,1}$.
Hence if $\ord_p f(\alpha) \geq n$, then there exists $1 \leq i \leq k$ with
\begin{equation*}
\ord_p (\alpha - \alpha_i) \geq \frac{n}{d_1 m_1 + \cdots + d_k m_k} = \frac{n}{m} \; .
\end{equation*}
This determines $\alpha$ modulo  $p^{\lceil \frac{n}{m}\rceil}$.
Hence the set $A$ of $\alpha \in p\FZ_p$ satisfying the above inequality
is stable under the translation action of $p^n \FZ_p$ and
$|A/p^n\FZ_p| \leq p^{n - \lceil \frac{n}{m}\rceil}$. 
This completes the proof.
\end{proof}

\begin{cor}\label{cor-app-cong}
Let $f \in \FZ[x]$ be an polynomial of degree $d > 0$ and let $p$ be a prime
which does not divide the coefficient of $x^d$. 
Then we have for $n \geq 2$
\begin{equation*}
\left|\left\{ z \in  \FZ/p^n\FZ \; | \; f(z) \equiv 0 \negthickspace\mod p^n \right\} \right| \; \leq \;
c p^{n - \lceil \frac{n}{l}\rceil} \; \leq \; d p^{n-\lceil \frac{n}{d}\rceil} \; ,
\end{equation*}
where $c$ is the number of distinct roots 
of $f(x) \equiv 0 \mod p$ and $l$ is the maximum of multiplicity of these roots.
\end{cor}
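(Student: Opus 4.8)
The plan is to reduce immediately to the preceding proposition by partitioning the solution set modulo $p^n$ according to its reduction modulo $p$. First I would observe that since $p$ does not divide the leading coefficient of $f$, the reduction $\bar f \in \FZ/p\FZ[x]$ has degree exactly $d$, so its distinct roots—say $r_1,\dots,r_c$ with respective multiplicities $m_1,\dots,m_c$—are well defined and satisfy $\sum_{i=1}^c m_i \le d$. Any $z \in \FZ/p^n\FZ$ with $f(z) \equiv 0 \bmod p^n$ in particular satisfies $f(z) \equiv 0 \bmod p$, hence reduces modulo $p$ to exactly one of the $r_i$. Thus the solution set modulo $p^n$ is the disjoint union, over $i=1,\dots,c$, of the sets $\{\,z : f(z)\equiv 0 \bmod p^n,\ z \equiv r_i \bmod p\,\}$, the disjointness being immediate from the distinctness of the $r_i$.

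Next I would apply Proposition A.1 to each piece: the $i$-th set has at most $p^{\,n-\lceil n/m_i\rceil}$ elements. Summing,
\[
\bigl|\{\,z \in \FZ/p^n\FZ : f(z)\equiv 0 \bmod p^n\,\}\bigr| \;\le\; \sum_{i=1}^c p^{\,n-\lceil n/m_i\rceil}.
\]
Since $m_i \le l$ for every $i$, we have $n/m_i \ge n/l$ and therefore $\lceil n/m_i\rceil \ge \lceil n/l\rceil$, so each summand is bounded by $p^{\,n-\lceil n/l\rceil}$; this yields the first inequality $\le c\,p^{\,n-\lceil n/l\rceil}$.

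For the second inequality I would use the two elementary bounds $c \le d$ and $l \le \sum_{i} m_i \le d$. The latter gives $\lceil n/l\rceil \ge \lceil n/d\rceil$, hence $p^{\,n-\lceil n/l\rceil}\le p^{\,n-\lceil n/d\rceil}$, and combined with $c\le d$ this produces $c\,p^{\,n-\lceil n/l\rceil}\le d\,p^{\,n-\lceil n/d\rceil}$, as claimed. There is no real obstacle here beyond Proposition A.1 itself; the only points requiring a moment's care are the monotonicity of $k\mapsto\lceil n/k\rceil$ and the bound $\sum_i m_i \le d$, which uses that $\bar f$ has full degree $d$. The degenerate case $c=0$, in which $\bar f$ has no root and both sides vanish, can be dispatched at the outset.
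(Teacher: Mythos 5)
Your proof is correct and follows exactly the argument the paper intends: the corollary is stated without proof precisely because it follows from the proposition by partitioning the solutions modulo $p^n$ according to their reductions modulo $p$ and summing the resulting bounds, which is what you do. Your handling of the second inequality via $c \le d$ and $l \le \sum_i m_i \le d$ (using that $\bar f$ has full degree $d$ since $p$ does not divide the leading coefficient) is the evident intended justification as well.
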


\section{Iterated constant term and multidimensional residue in general coefficient}\label{app-ict}

The goal of this appendix is to introduce the notion of rational function with coefficient of arbitrary commutative ring
and to check definability of iterated residue of a rational function in several variables in the sense of Parshin(\cite{Parshin}). 
The result presented here is fairly direct and we will state brief idea how it works and won't give any formal proof. 

%
%In this appendix, we introduce the notion of iterated residue in the sense of Parshin and Szene for rational 
%functions defined over a general commutative ring.

Let  $R$ be a commutative ring with $1$. We may identify a polynomial $f(x_1,\ldots,x_n)$ in variables $(x_1,\ldots,x_n)$ with  functions on $\FA^n_R=\spec( R[x_1,\ldots,x_n])$.
The constant term of $f(x_1,\ldots,x_n)$ at $0$ is well-defined by putting $(x_1,\ldots,x_n)=0$. 
Notice this is independent of the choice of the variables. 
The constant term is a well-defined $R$-linear map on $R[x_1,\ldots,x_n]$.  
% with 
%polynomial 
\begin{equation}
\begin{split}
\ct: R[x_1, \ldots, x_n] &\longrightarrow R \\
f(x_1,\ldots,x_n)=\sum_{I} a_I x^I &\mapsto f(0,\ldots,0) = a_{0,\ldots,0}
\end{split}
\end{equation}
This extends to the ring of formal power series $R[[x_1,\ldots,x_n]]$ since $\ct$ is continuous
w.r.t. $\fm= (x_1,\ldots, x_n)$-adic topology and 
$$
R[[x_1,\ldots,x_n]]=\varprojlim_\fm R[x_1,\ldots,x_n].
$$
It can be rephrased as multivariable version of the Cauchy integral formula:
$$
\ct(f)=a_{0\ldots0}=\res_0 f(x_1,\ldots,x_n) \frac{dx_1}{x_1}\wedge\ldots\wedge\frac{dx_n}{x_n}
$$
This identification may be taken as the definition of the residue at $0$ for polynomials in several variables. 

However, for a rational function, the above definition is not extended as the evaluation map $(x_1,\ldots,x_n)\mapsto 0$ cannot be extended.

Let us define first a rational function with coefficient in a commutative ring $R$.
\begin{defn}
A rational function in variable $x_1,\ldots,x_n$ with coefficient in $R$ is an element of the localized ring of 
$R[x_1,\ldots,x_n]$ by the multiplicative system of nonzero divisor polynomials. We denote the ring of rational 
functions by $R(x_1,x_2, \ldots,x_n)$.
\end{defn}

In one variable case, the  Cauchy integral formula, if it makes sense, can be used to define 
the constant term:
%we may extend 
$$
\ct(f) := a_0 = \res_{x=0} f(x) \frac{dx}x.
$$
Furthermore, the coefficients of nonconstant terms are defined by
$$
a_n := \res_{x=0} x^{-n} f(x) \frac{dx}{x}.
$$
Thus we may identify a rational function $f(x)$ with
$$
%f(x) = 
\sum_{n\in \FZ}  a_n %\res_{x=0}( x^{-n} f(x)\frac{dx}{x})
 x^n \in R[[x]][x^{-1}].
$$
for $a_n$ defined as above.
This enables us to embed 
\begin{equation}
\begin{split}
R(x) \; ``\hookrightarrow" \; R((x)):= R[[x]][x^{-1}].
\end{split}
\end{equation}
Notice that $\ct$ is defined on $R((x))$ by taking $\sum_{i=-N}^\infty a_i x^i \mapsto a_0$.
If $R$ is a field, every rational function admits Laurent series expansion at $0$.
However if we take the coefficients of  rational functions from a general commutative ring, this need not hold.
We will say a rational function is \textit{admissible} if  it has a Laurent series. 
For an admissible function, the constant term is well-defined by the Cauchy integral formula.
Here we present a condition for a rational function in a single variable to be admissible without proof.
\begin{lem}
A nonzero rational function 
$f(x) = \frac{g(x)}{h(x)}$ is admissible if and only if it has a factorization
$$
f(x) = \frac1{x^n}\frac{a(x)}{u(x)}. 
$$
\end{lem}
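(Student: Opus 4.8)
The plan is to read ``admissible'' as the assertion that $f$, a priori only an element of the total ring of fractions $R(x)$, in fact lies in $R((x))=R[[x]][x^{-1}]$, and to build everything on the one structural fact about power series: $u(x)\in R[[x]]$ is invertible in $R[[x]]$ if and only if its constant term $u(0)$ is a unit in $R$, the inverse being the $\fm$-adically convergent geometric series for $\fm=(x)$. Accordingly I take the implicit hypotheses of the displayed factorization to be $a(x),u(x)\in R[x]$, $u(0)\in R^{\times}$, and $n\ge 0$. The easy implication is then immediate: if $f=x^{-n}a/u$ with $u(0)$ a unit, then $u^{-1}\in R[[x]]$, so $au^{-1}\in R[[x]]$ and $x^{-n}au^{-1}\in R[[x]][x^{-1}]=R((x))$, whence $f$ is admissible.

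For the converse I would first strip the pole at the origin. If $f=g/h$ is admissible with Laurent series $\phi=\sum_{i\ge -n}c_ix^i$ satisfying $h\phi=g$ in $R((x))$, then $\psi:=x^{n}\phi\in R[[x]]$ is a genuine power series equal to the rational function $x^{n}g/h$. The whole statement reduces to showing that such a $\psi$ admits a representation $\psi=a/u$ with $a,u\in R[x]$ and $u(0)\in R^{\times}$, for then $f=x^{-n}a/u$. I would encode this through the \emph{denominator ideal} $\fb=\{\,v\in R[x]\mid v\psi\in R[x]\,\}$ of $\psi$, which contains $h$: producing the desired $u$ is exactly the assertion that $\fb+(x)=R[x]$, i.e. that $\fb$ contains a polynomial with unit constant term.

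The condition $\fb+(x)=R[x]$ is well suited to a local--global attack. The constant-term map $\varepsilon\colon R[x]\to R$, $v\mapsto v(0)$, is a surjective ring homomorphism with kernel $(x)$, so $\varepsilon(\fb)$ is an ideal of $R$ and $\fb+(x)=R[x]$ is equivalent to $\varepsilon(\fb)=R$, which in turn holds iff $\varepsilon(\fb)\not\subseteq\fm$ for every maximal ideal $\fm$ of $R$. Thus it suffices to find, for each $\fm$, a single denominator of $\psi$ whose constant term avoids $\fm$; passing to $R_{\fm}$ and clearing the finitely many denominators from $R\setminus\fm$ shows this follows from the \emph{local} case of the lemma. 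Over a field the local case is elementary: $R[x]$ is a principal ideal domain, $\psi$ has a reduced fraction $a/u$ with $\gcd(a,u)=1$, and $\psi\in R[[x]]$ forces $u(0)\ne 0$, since $x\mid u$ together with $u\psi=a$ would give $x\mid a$ against coprimality.

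The hard part will be the local case over a ring that is neither a domain nor reduced, where one can neither cancel common factors nor invert leading coefficients, so a reduced fraction need not exist and $\fb$ can genuinely fail to ``see'' a unit-constant denominator by naive manipulation. The mechanism I expect to need is that admissibility forces the non-unit part of $h(0)$ to be absorbable into a power of $x$: for instance over $R=\FZ/q\FZ$ (the case of interest, a finite product of Artinian local rings $\FZ/p^{e}\FZ$) a denominator such as $t-x$ with $t$ nilpotent can be multiplied by a suitable conjugate to collapse it to $x^{m}$ times a unit, exhibiting the factorization directly. I would therefore carry out the argument prime-by-prime using the decomposition of $R=\FZ/q\FZ$ into local factors, establish the factorization over each Artinian local factor by such an absorption step, and then reassemble via the local--global reduction above; isolating and proving this absorption statement in the local Artinian setting is the genuine obstacle.
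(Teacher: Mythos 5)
You should know at the outset that the paper offers no proof to compare against: Appendix~B states this lemma explicitly ``without proof,'' so your argument has to stand entirely on its own, and as written it does not. Your reductions are sound: the easy direction via invertibility of $u$ in $R[[x]]$, the passage to $\psi = x^{n}f \in R[[x]]$, the denominator ideal $\fb = \{\,v \in R[x] \mid v\psi \in R[x]\,\}$, the equivalence of the desired factorization with $\fb + (x) = R[x]$, and the case of a field are all correct (dropping the normalization $a(0)\neq 0$ is harmless, since powers of $x$ can be factored out of $a$). But the converse is then made to rest entirely on the ``absorption statement'' over an Artinian local ring, which you name as ``the genuine obstacle'' and never prove; you only verify it for a linear denominator $t-x$ with $t$ nilpotent. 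Since for $R = \FZ/q\FZ$ (the only ring the paper uses) the whole lemma reduces, via the CRT splitting into factors $\FZ/p^{e}\FZ$, to exactly this local Artinian case, your proposal is missing its proof precisely where all the content lies. The gap can be closed as follows. Let $(R,\fm)$ be Artinian local with $\fm^{e}=0$, $e$ minimal. If every coefficient of $h$ lay in $\fm$, any nonzero $c \in \fm^{e-1}$ would satisfy $ch=0$, contradicting (by McCoy's theorem) that $h$ is a non-zerodivisor; so $h$ has a unit coefficient. Let $j$ be minimal with $h_j \in R^{\times}$ and write $h = x^{j}v - w$, where $v(0)=h_j \in R^{\times}$ and $w \in \fm R[x]$ has degree $<j$, hence $w^{e}=0$. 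The telescoping identity
\[
h\cdot\sum_{k=0}^{e-1}(x^{j}v)^{e-1-k}w^{k} \;=\; (x^{j}v)^{e}-w^{e} \;=\; x^{je}v^{e}
\]
turns $h\psi = b \in R[x]$ into $x^{je}v^{e}\psi = Qb$ with $Q \in R[x]$; comparing coefficients in degrees $<je$ shows $Qb = x^{je}c$ for some $c \in R[x]$, and cancelling the non-zerodivisor $x^{je}$ in $R[[x]]$ gives $v^{e}\psi = c$. Thus $v^{e} \in \fb$ and $v^{e}(0)=v(0)^{e}\in R^{\times}$, which is your absorption statement in full generality.

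There is a second, unacknowledged flaw: the localization form of your local--global step does not work for general $R$. From $u'\psi = a'$ in $R_{\fm}[[x]]$, clearing the denominators of the finitely many coefficients of $u'$ and $a'$ only produces $u'', a'' \in R[x]$ with $u''(0)\notin\fm$ such that \emph{each} coefficient of $u''\psi - a''$ is annihilated by \emph{some} element of $R\setminus\fm$; since there are infinitely many coefficients, there need not be a common annihilator, so you cannot conclude $u''\in\fb$, and $\varepsilon(\fb)\not\subseteq\fm$ does not follow. This can be repaired when $R$ is Noetherian (the coefficients of $u''\psi - a''$ generate a finitely generated ideal, whence a common annihilator $s\notin\fm$ exists and $su''\in\fb$), and it is unnecessary for $R=\FZ/q\FZ$, where localization at a maximal ideal is projection onto a direct factor. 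But every such route still funnels into the unproven local case above, and the lemma as the paper states it --- for an arbitrary commutative ring $R$ --- is not reached by your argument at all.
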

with 
\begin{itemize}
\item $a(x)$ polynomial such that $a(0)\ne 0$.
\item $u(x)$ polynomial such that $u(0) \in R^\times$.
\end{itemize}

The above can be stated to a ratio $\frac{g(x)}{h(x)}$ of two power series $g(x), h(x)\in R[[x]]$.
One should be aware that the condition is equivalent to saying
$u(x) \in R[[x]]^\times$.
It is the reason why any rational function with coefficient in a field admits Laurent series expansion.

Now we consider the several variable case. 
This is the Parshin's residue. 
We begin by relating a formal distribution to a rational function:
%For a correspondence
$$
\iota : f(x_1,\ldots,x_n) \mapsto \sum_{I\in \FZ^n} a_I x^I \in R[[x_1^{\pm},\ldots,x_n^{\pm}]]
$$
Then we define the constant term, through a multivariable version of Cauchy integral formula. 
$$
\ct_\iota (f) = a_{0\ldots0} = \res_{0} \iota\left(f(x_1,\ldots,x_n)\right)\frac{dx_1}{x_1}\cdots\frac{dx_n}{x_n}
$$
Here the image lies in the additive group of formal distributions, where we don't have multiplication of two elements.
Such an embedding is defined for Parshin's point at $0$. It is a tower of 1-dimensional local fields when $R=k$ is 
a field. For simplicity we consider only flags given by hyperplane arrangement.
A (central) hyperplane arrangement is an ordered tuple $\fA = (H_1,H_2,\ldots,H_m)$ of hyperplanes 
$H_i= V(\alpha_i)$ for  linear form $\alpha_i$ defined on $\FA^n$.
For $\fA$ to be associated with a flag of varieties at $0$, $\fA$ needs necessarily \textit{essential}(i.e. 
$m=n$ and $H_i$ are in general position so that $\cap_{i=1}^n H_i = 0$).
Consider the flag of linear spaces associated to $\fA=(H_1,H_2,\ldots,H_n)$:
$$
\flag_\fA=(V_0,V_1,V_2,\ldots,V_{n-1})
$$
where $V_i=V(\alpha_{i+1},\alpha_{i+2},\ldots,\alpha_n)$.
In this way, we define a Parshin point at $0$.

From now on, suppose for simplicity $\alpha_i=x_i$. 
This is not a big deal as we can always change the variable without loss of generality.

When $R=k$ a field, for this Parshin point(ie. the flag of linear subspaces given by the hyperplane arrangement  $\fA= (H_{x_1},\ldots, H_{x_n})$),  we have unique embedding
$$
\iota:k(x_1,\ldots,x_n) \hookrightarrow k((x_1))((x_2))\cdots((x_n)) 
\left(\subset k[[x_1^{\pm},\ldots,x_n^{\pm}]]\right).
$$
This embedding is defined by iterating the following completion at each stage:
$$
k(x_1,\ldots,x_i)%((x_{i+1}))\cdots((x_n)) 
\overset{\iota_i}{\hookrightarrow} k(x_1,\ldots,x_{i-1})((x_{i})).%\cdots((x_n)).
$$
$\iota_i$ yields the constant map $\ct_{x_i=0}$ w.r.t. $x_i$: 
$$
\ct_{x_i} f(x_1,\ldots,x_i) := \ct_{x_i} \iota_i (f(x_1,\ldots,x_i)) = \res_{x_i=0} \iota(f) \frac{dx_i}{x_i} = a_0(x_1,\ldots,x_{i-1}),
$$
where $\iota_i f(x_1,\ldots,x_i) = \sum_{k=-N}^\infty a_k (x_1,\ldots,x_{i-1})x_i^k$.
%Taking $K_i = k(x_1,\ldots,x_{i-1})$, we have the Laurent series expansion map:
%$$
%K_i(x_i)=k(x_1,\ldots,x_i) \hookrightarrow K_i((x_i)).
%$$

The \textit{iterated constant term} w.r.t. $\fA$ is defined by iterating the residue map at each variable $x_i$.
Namely,
\begin{equation}
\begin{split}
\ict_\fA( f(x_1,x_2,\ldots,x_n)) := \ct_{x_1}\circ\ct_{x_1}\circ\cdots\circ\ct_{x_n} f(x_1,x_2,\ldots,x_n) \\
= 
\res_{x_1=0}\circ\cdots\circ\res_{x_n=0} f(x_1,\ldots,x_n)\frac{dx_1}{x_1}\wedge\cdots\wedge\frac{dx_n}{x_n}
\end{split}
\end{equation}

One should note here that the iterated constant term depends heavily on the order of the variables or equivalently the
order of the hyperplanes giving the flag. 
For example, we may consider the two embeddings of $k(x,y)$ into $k[[x^\pm,y^\pm]]$:
\begin{align}
\iota_{x,y}:k(x,y) \to k((x))((y))\\
\iota_{y,x}:k(x,y) \to k((y))((x))
\end{align}
Then 
$$
\iota_{x,y} \left( \frac1{x-y}\right) = \frac1x + \frac{y}{x^2} + \frac{y^2}{x^3} +\ldots,
$$
while
$$
\iota_{y,x} \left(\frac1{x-y}\right) = -\frac1y - \frac{x}{y^2} -\frac{x^2}{y^3}-\ldots.
$$
Thus $\ict_{x,y}\left(\frac1{x-y}\right) = 1$ but  $\ict_{y,x}\left(\frac1{x-y}\right) = -1$. 

Now we consider rational functions in variables $x_1,\ldots,x_n$ with coefficient in a general commutative 
ring $R$:
$$
r(x_1,\ldots,x_n)= \frac{f(x_1,\ldots,x_n)}{g(x_1,\ldots,x_n)}
$$
for $f(x_1,\ldots,x_n), g(x_1,\ldots,x_n)(\ne 0) \in R[x_1,\ldots, x_n]$.
Here we assume that $g(x_1,\ldots,x_n)$ is not a zero-divisor.
\begin{defn}
Let $\fA$ be a Parshin point at $0$.
A rational function with coefficient in a commutative ring is called $\fA$-admissible, if it has a well-defined 
iterated residue(or equivalently iterated constant term) w.r.t. $\fA$. 
\end{defn}

Note that $\fA$-admissibility  is not preserved if the order of hyperplanes defining $\fA$ is changed.  

We are going to check the possibility to define the constant term of $r(x_1,\ldots,x_n)$ w.r.t. $\fA$ as above.
We can rewrite $g(x_1,\ldots,x_n)$ in a unique way:
\begin{equation}
\begin{split}
&g(x_1,\ldots,x_n) \\
=& a_0 + a_1(x_1) x_1 + a_2(x_1,x_2) x_2 + a_3(x_1,x_2,x_3) x_3 +\cdots + a_n(x_1,\ldots,x_{n}) x_n
\end{split}
\end{equation}
for $a_i(x_1,\ldots,x_i)\in R[x_1,\ldots,x_i]$.

Now we check the $\fA$-admissibility of $r(x)$ in terms of the above form of $g(x_1,\ldots,x_n)$.
The simplest case is when $a_0$ is a unit in $R$. 
In this case, $g(x_1,\ldots,x_n)$ is already invertible in $R[[x_1,\ldots,x_n]]$.
Thus $r(x_1,\ldots,x_n)$ lies in $R[[x_1,\ldots,x_n]]$ and is $\fA$-admissible for trivial reason.
The most general case is depicted by the following theorem:
\begin{thm}
Let $r(x_1,\ldots,x_n)=\frac{f(x_1,\ldots,x_n)}{g(x_1,\ldots,x_n)}$ and 
$g(x_1,\ldots, x_n)=a_0 + a_1(x_1) x_1 + a_2(x_1,x_2) x_2 + a_3(x_1,x_2,x_3) x_3 +\cdots + a_n(x_1,\ldots,x_{n}) x_n$.
Then
$r(x_1,\ldots,x_n)$ is admissible 
iff for the smallest $i$ such that $a_i(x_1,\ldots,x_i)$ is not zero-divisor, 
 $a_0, a_1(x_1),  \ldots,a_{i-1} (x_1,\ldots,x_{i-1}) x_{i-1}$ are nilpotent and
$\frac{1}{a_i(x_1,\ldots,x_i)}$ is $\fA$-admissible. 
\end{thm}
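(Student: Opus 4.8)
The plan is to locate the first non-zero-divisor $a_i$ in the sequence $a_0, a_1, \ldots, a_n$ and to split the denominator as $g = N + w$, where
$$
N = a_0 + a_1(x_1)x_1 + \cdots + a_{i-1}(x_1,\ldots,x_{i-1})x_{i-1}, \qquad w = a_i(x_1,\ldots,x_i)x_i + \cdots + a_n(x_1,\ldots,x_n)x_n .
$$
Under the hypotheses of the theorem every coefficient of $N$ is nilpotent in $R$, so $N$ lies in the nilradical of $R[x_1,\ldots,x_{i-1}]$ and $N^M=0$ for some $M$. I would then exploit the terminating expansion
$$
\frac{1}{g} = \frac{1}{w+N} = \sum_{k=0}^{M-1}(-1)^k N^k\, w^{-(k+1)} ,
$$
a finite sum of polynomials $N^k$ times integer powers of $1/w$. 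Since $R((x_1))\cdots((x_n))$ is a ring, $\fA$-admissibility is preserved under these operations; moreover the same computation run with $g$ in place of $w$ (using $1/w = \sum_k N^k g^{-(k+1)}$) shows the reduction is reversible. Hence, once $N$ is nilpotent, $1/g$ is admissible if and only if $1/w$ is.

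To see that $1/w$ is admissible precisely when $1/a_i$ is, I would introduce the variables $x_i, x_{i+1},\ldots, x_n$ one at a time and track units in the tower $R((x_1))\cdots((x_k))$. At the bottom, $1/a_i$ being $\fA$-admissible means $a_i$ (which involves only $x_1,\ldots,x_i$) is a unit of $R((x_1))\cdots((x_i))$; as $x_i$ is also a unit there, so is $a_i x_i$. For the step from level $k-1$ to level $k$ (for $k>i$), writing $w_k = a_i x_i + \cdots + a_k x_k = w_{k-1}\bigl(1 + a_k x_k / w_{k-1}\bigr)$, the correction term has strictly positive $x_k$-adic valuation because $w_{k-1}$ is a unit of $R((x_1))\cdots((x_{k-1}))$ and hence has $x_k$-valuation zero. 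Therefore $w_k$ is again a unit, and at the top $w = w_n$ is a unit of $R((x_1))\cdots((x_n))$, so $1/w$ is admissible. This establishes the forward implication of the theorem.

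The converse is the heart of the argument and where I expect the main difficulty. Granting for the moment that $a_0,\ldots,a_{i-1}$ are nilpotent, the reduction of the first paragraph applies, so admissibility of $1/g$ yields admissibility of $1/w$; reversing the bootstrap of the second paragraph then forces $1/a_i$ to be admissible. This reversal is legitimate because $a_i$, being a non-zero-divisor of $R[x_1,\ldots,x_i]$, remains a non-zero-divisor in $R((x_1))\cdots((x_i))$ by a McCoy-type argument, so $1/w$ carries no negative powers of $x_{i+1},\ldots,x_n$ and may be specialized at $x_{i+1}=\cdots=x_n=0$ to recover $1/(a_i x_i)$. The genuinely hard point is the necessity of nilpotency itself: that a coefficient $a_j$ with $j<i$ which is a zero-divisor but not nilpotent obstructs every Laurent expansion of $1/g$. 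Here the terminating-series mechanism is unavailable, and I would instead invoke the necessity half of the one-variable lemma successively along the flag, the key being that a denominator whose lowest surviving coefficient is a non-nilpotent zero-divisor cannot be turned, by multiplication by a polynomial with nonzero constant term, into a power of the variable times a series with unit constant term. The delicate work throughout is the comparison of zero-divisors and nilpotents in $R$ with those in the non-reduced, non-domain rings $R((x_1))\cdots((x_k))$, and I expect this comparison to be the principal obstacle to a fully rigorous proof.
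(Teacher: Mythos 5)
Your ``if'' half is correct, and it is actually more than the paper itself provides: the paper declares it ``won't give any formal proof'' and disposes of the theorem with the single sentence that one should iterate the one-variable criterion, whereas your splitting $g=N+w$ with $N$ nilpotent, the terminating expansion of $1/(w+N)$, and the unit bootstrap $w_k=w_{k-1}\bigl(1+a_kx_k/w_{k-1}\bigr)$ up the tower $R((x_1))\cdots((x_k))$ form a complete argument for that implication. (The implicit step that $\fA$-admissibility of $1/a_i$ amounts to $a_i$ being a unit of $R((x_1))\cdots((x_i))$ is harmless: compare coefficients of the outer variables.)

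The converse, which you yourself flag as the principal obstacle, is a genuine gap, and it is not one you can close: that half of the statement is false as written. Over $R=\FZ/6\FZ$ take $n=1$ and $g(x_1)=2+x_1$, so $a_0=2$ is a zero-divisor that is not nilpotent, while $a_1=1$ is the smallest non-zero-divisor coefficient and $1/a_1$ is trivially admissible; the theorem then asserts that $1/g$ is \emph{not} admissible. Yet
$$
(2+x_1)\left(3x_1^{-1}+2+2x_1+2x_1^2+\cdots\right)=1 \quad\text{in } (\FZ/6\FZ)((x_1)),
$$
so $1/(2+x_1)$ does have a Laurent expansion; equivalently, $(3+2x_1)(2+x_1)=x_1(1+2x_1)$, so the paper's own one-variable lemma certifies admissibility via $\frac{1}{2+x_1}=\frac{1}{x_1}\cdot\frac{3+2x_1}{1+2x_1}$. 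This identity is precisely a counterexample to the ``key fact'' you propose for the necessity step: the polynomial $3+2x_1$, with nonzero constant term, \emph{does} turn a denominator whose lowest coefficient is a non-nilpotent zero-divisor into a monomial times a series with unit constant term. The mechanism your plan (and the theorem) overlooks is idempotents, not the zero-divisor/nilpotent comparison you anticipated: $\FZ/6\FZ\cong\FZ/2\FZ\times\FZ/3\FZ$, and $g$ begins with a unit on one factor but with a nilpotent followed by a unit on the other, so the two leading indices differ and glue to a unit of $R((x_1))$ that the single index $i$ in the statement cannot detect. (For the same reason the ``smallest $i$ with $a_i$ a non-zero-divisor'' need not even exist: $g=2+3x_1$ over $\FZ/6\FZ$ has both coefficients zero-divisors, yet $g$ is a non-zero-divisor with $1/g=3x_1^{-1}+2$ admissible.) A true converse requires $R$ to have no nontrivial idempotents, a hypothesis that fails for the paper's intended coefficient rings $\FZ/q\FZ$ with $q$ composite; so the honest conclusion is to prove only the ``if'' direction, which is all the paper's application ever uses (there the relevant functions are in fact Laurent polynomials), rather than to pursue the necessity argument along the flag.
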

It is a direct consequence of the previous theorem which states the condition for a rational function in a variable to be admissible. We simply iterate the 1-variable criterion to the Parshin point to obtain the higher dimensional result.

\end{document}